\titleformat{\subsubsection}[runin]{\normalfont\bfseries}{\thesubsubsection}{1em}{}
\newtheorem{theorem}{Theorem}[section]
\newtheorem{definition}[theorem]{Definition}
\newtheorem{lemma}[theorem]{Lemma}
\newtheorem{remark}[theorem]{Remark}
\newtheorem{example}[theorem]{Example}
\newtheorem{corollary}[theorem]{Corollary}
\newtheorem{proposition}[theorem]{Proposition}  
\renewcommand{\qed}{\hfill $\blacksquare$}
\newcounter{para}[section]
\newcommand{\oprocendsymbol}{\hbox{$\bullet$}}
\newcommand{\oprocend}{\relax\ifmmode\else\unskip\hfill\fi\oprocendsymbol}
\newcommand{\g}{\mathfrak{g}}
\newcommand{\Sp}{\operatorname{Span}}
\newcommand{\R}{\mathbb{R}}
\renewcommand{\cal}{\mathcal}
\newcommand{\SE}{\operatorname{SE}}
\newcommand{\stab}{\operatorname{Stab}}
\newcommand{\so}{\mathfrak{so}}
\newcommand{\se}{\mathfrak{se}}
\newcommand{\Hess}{\operatorname{Hess}}
\newcommand{\tr}{\text{Tr}}
\renewcommand{\(}{\left (}
\renewcommand{\)}{\right )}
\newcommand{\F}{\mathbb{F}}
\definecolor{BBlue}{cmyk}{.98,0.10,0,.25}
\begin{document}

\title{\rule{\textwidth}{0.4mm} {\color{BBlue}\textsc{	 Controlling and Stabilizing a Rigid Formation using a few agents}}\\
\rule{\textwidth}{0.4mm}}
\date{}
\maketitle

\vspace{-2cm}
\begin{flushright}
{\footnotesize {\color{BBlue}{\bf Xudong Chen, M.-A. Belabbas, Tamer Ba\c sar}}}
\end{flushright}    

\newcommand{\f}{{f}}
\renewcommand{\F}{\bm{F}}
\newcommand{\h}{{h}}
\renewcommand{\g}{{g}}
\renewcommand{\u}{{u}}
\renewcommand{\H}{\bm{H}}
\newcommand{\of}{\omega_{\mathcal{F}}}

\begin{abstract}

We show in this paper that a small subset of agents of a formation of $n$ agents in Euclidean space can control the position and orientation of the entire formation. We consider here formations  tasked with maintaining inter-agent distances at prescribed values. It is known that when the inter-agent distances specified can be realized as the edges of a rigid graph, there is  a finite number of possible configurations of the agents that satisfy the distance constraints, up to rotations and translations of the entire formation. We show here that under mild conditions on the type of control used by the agents, a subset of them forming a clique  can work together to control the position and orientation of the formation as a whole.  Mathematically, we investigate the effect of certain allowable perturbations of a nominal dynamics of the formation system. In particular, we show that any such perturbation leads to a rigid motion of the entire formation. Furthermore, we show that the map which assigns to a perturbation the infinitesimal generator of the corresponding rigid motion  is locally surjective.


\end{abstract}

\section{ Introduction}
Formation control deals with the design of decentralized control laws to stabilize agents at prescribed distances from each other. It has been studied extensively in the past few years; see~\cite{mou2016undirected,cao2008control,krick2009,tabuada2005motion,summers2009formation} for various applications, and~\cite{OhAhnreviewAutomatica15} for a recent review of the extant work, and more references therein. It was shown in a recent paper~\cite{mou2016undirected} that when a pair of neighboring agents has a different understanding of what the target edge-lengths are, and when the graph describing the neighboring relationship is rigid~\cite{graver1993combinatorial,laman1970,zelazo2012rigidity}, the system undergoes a constant rigid motion. 
In this paper, we show that this property of the system, which was seen as a lack of robustness in earlier work, can be used to advantage, to control the orientation of the formation as a whole. To this end, we assume that some agents, linked by a few edges in the underlying graph,  can control the mismatches in target edge-lengths corresponding to these edges. The control then allows them to generate a rigid motion for the whole formation. Specifically, we show that if these agents form a non-degenerate triangle in the formation (or a non-degenerate $k$-simplex in the $k$-dimensional case), then they can in fact generate an {\it arbitrary} rigid motion. We then conclude from this fact that these agents can steer the whole formation arbitrarily close to any {\it desired} position and orientation. See Fig.~\ref{fig:form} for an illustration.

\begin{figure}[h]
\begin{center}
	\begin{tikzpicture}[ ,shorten >=1pt,auto,node distance=1.4cm,
  thin,main node/.style={circle,draw},inner sep=1pt]
\begin{scope}
[rotate=45]
 \node[main node,fill=blue!30] (1) at (0,0) {1};
  \node[main node,fill=blue!30] (4) at (-.71,-1)  {4};
  \node[main node,fill=blue!30] (2) at (.71,-1) {2};
\node[main node,fill=blue!30] (3) at (1.42,0) {3};
\node[main node,fill=blue!30] (5) at(2.1,-1) {5};
  \path[every node/.style={font=\sffamily\small}]
    (1)      edge   (2)
      edge   (3)
    (3)  edge (2)
    (1) edge(4)
    (4) edge (2)
    (3) edge (5)
    (2) edge(5)
        ;
     \node (cap) at (.71,-1.7) {$t=0$};         
\end{scope}
       
\begin{scope}[xshift=4.5cm,yshift=2cm,rotate=-0]
	  \node[main node,fill=red!30] (1) at (0,0) {1};
  \node[main node,fill=blue!30] (4) at (-.71,-1)  {4};
  \node[main node,fill=red!30] (2) at (.71,-.9) {2};
\node[main node,fill=red!30] (3) at (1.37,0) {3};
\node[main node,fill=blue!30] (5) at(2.1,-1) {5};
  \path[every node/.style={font=\sffamily\small}]
    (1)      edge[red,dashed]   (2)
      edge[red,dashed]   (3)
    (2)  edge[red,dashed] (3)
    (1) edge (4)
    (2) edge (5)
    (3) edge (5)
    (2) edge (4)
        ;
       \node (cap) at (.71,-1.6) {$t=5$};         
  
\end{scope}

\begin{scope}[xshift=9.5cm,yshift=1cm,rotate=-45]
	  \node[main node,fill=blue!30] (1) at (0,0) {1};
  \node[main node,fill=blue!30] (4) at (-.71,-1)  {4};
  \node[main node,fill=blue!30] (2) at (.71,-1) {2};
\node[main node,fill=blue!30] (3) at (1.42,0) {3};
\node[main node,fill=blue!30] (5) at(2.1,-1) {5};
  \path[every node/.style={font=\sffamily\small}]
    (1)      edge   (2)
      edge   (3)
    (3)  edge (2)
    (1) edge(4)
    (4) edge (2)
    (3) edge (5)
    (2) edge(5)
        ;
\node (cap) at (.71,-1.7) {$t=10$};         
\end{scope}
\end{tikzpicture}

\end{center}
	\caption{The rigid formation with 5 agents on the left ($T=0$) is as rest. In order to change its orientation by a rotation of 90 degrees and a translation to obtain to the formation on the right ($T=10$), we select nodes $\{1,2,3\}$, which form a triangle. By controlling, e.g., the target edge-length mismatch, we are able to control the position/orientation of the formation. The plain edges indicate that the target edge-lengths are met. The dashed edges indicate that there is a discrepancy between current edge-length and target edge-length. We show an intermediate step in the trajectory at $T=5$.}\label{fig:form}
\end{figure}
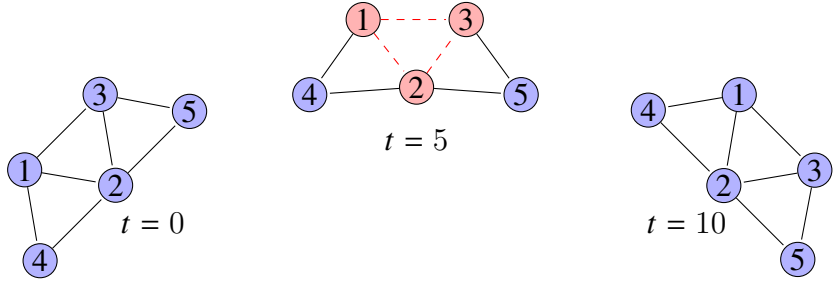

We now outline the contents of this paper more precisely. Let $G = (V,E)$ be an undirected graph on $n$ vertices with $V = \{1,\ldots,n\}$ the vertex set and $E$ the edge set. 
 Denote by ${\cal N}_i$ the neighbors of $i$ in $G$. Consider a (decentralized) formation with $n$ agents $x_1,\ldots, x_n\in \R^k$  following the dynamics:
\begin{equation}\label{eq:Model1}
\dot x_i = \sum_{j\in{\cal N}_i} f_{ij}(\|x_j - x_i \|) (x_j - x_i), \hspace{10pt} i=1,\ldots,n
\end{equation}
where 
each $f_{ij}: \R_{\ge 0} \longrightarrow \R$ is a continuously differentiable function modeling the interaction between $x_i$ and $x_j$.  A simple choice is $f_{ij}(x)=(x-\overline d_{ij})$, where $\overline d_{ij}$ is the {\em target value} for $\|x_i-x_j\|$, or the {\em target edge-length} for agents $x_i$ and $x_j$ to achieve and maintain. We call a configuration $q = (x_1,\ldots, x_n)$ {\em a target configuration} if $\|x_i - x_j\| = \overline d_{ij}$ for all $(i,j)\in E$. Note that if the underlying graph $G$ is rigid, then there are only {\em finitely many} target configurations modulo translations and rotations.  

It is well known that if the interactions among agents are reciprocal, i.e., $f_{ij} = f_{ji}$ for all $(i,j)\in E$, then system~\eqref{eq:Model1} is the gradient flow of the potential function:
\begin{equation}\label{eq:Potential}
\Phi(x_1,\ldots, x_n) := \sum_{(i,j)\in E} \int^{\|x_j - x_i \|}_{1} xf_{ij}(x) dx.
\end{equation}
Using this fact, one can easily generate locally stabilizing control laws. Indeed, for any target configuration~$q$, one can always stabilize $q$  by choosing the $f_{ij}$'s so that  $q$ is a local minimum point of $\Phi$. For example, if each $f_{ij}$ is monotonically increasing and has $\overline d_{ij}$ as its unique zero, then~$q$ is a global minimum point of~$\Phi$ (see, for example~\cite{doi:10.1137/15M105094X}).   
 However, we note that designing feedback control laws to stabilize {\em only} the target configurations is an open problem in general. A partial solution to the problem has been provided in~\cite{doi:10.1137/15M105094X} where we exhibited a class of rigid graphs in $\R^2$, termed triangulated Laman graphs, as well as a class of feedback control laws which stabilize  only the target configurations.    

The potential function $\Phi$ depends only the relative distances $\|x_i - x_j\|$ between agents, and hence is {\it invariant} under translations and/or rotations of the  state-space---that is invariant under an action of the group $\SE(k)$ of rigid motions. The control laws obtained as the gradient of such  potentials inherit their  invariance properties, and are called {\it $\SE(k)$-equivariant}.  These statements, made explicit below, imply that if $q$ is an equilibrium, then so is any configuration in its orbit $O_{q}$ under this group action. We thus conclude that an equivariant control law cannot be used to steer the system between two equilibrium configurations {\it within} the same orbit. We address in this paper the design of control laws that permit such steering. Moreover, in Section~\S\ref{sec:robustnessissue}, we indicate how such results can be leveraged to address the robustness issue in formation control that was pointed out in~\cite{mou2016undirected}.

We sketch below the idea of the proof. We know that a certain mismatch in target edge-lengths induces a rigid motion. We will show that, more generally, mismatches in {\it interaction laws} between neighboring agents produce rigid motions;  indeed,  mismatches in target edge-lengths simply belong to a finite-dimensional subset of the mismatches in interaction laws. We then introduce a function which assigns to a mismatch in interaction law  the corresponding infinitesimal generator of a rigid motion.  We then show that this function is locally {\it  surjective} as long as the edges that correspond to the mismatched interaction laws, together with the vertices incident to these edges,  form a $k$-simplex. We prove this fact by showing that the linearization of the map is of full rank. Said otherwise, if there are $(k+1)$ agents in the formation system who form a $k$-simplex in $\R^k$, and can control the mismatches in the interactions laws that correspond to the edges of the simplex, then these $(k+1)$ agents alone will be able to generate an {\it arbitrary} rigid motion for the formation. Using this fact, we show that we can generate a control law, which involves controlling the target edge-length mismatches, to reach a desired configuration within an arbitrarily small tolerance.

The remainder of the paper is organized as follows. In the following section, we introduce the basic definitions used in the paper, such as equivariant dynamical system, orbit through a point, and define the control models by which one is able to translate and/or rotate the entire formation via the method described above. We term any such control model {\it a formation system with a clique}. We also state the main controllability result precisely. In Section~\S\ref{sec:invariantorbit}, we present some general results about equivariant dynamical systems. We review the notion of an invariant orbit, and show how one can make sense of the Hessian associated with an invariant orbit. The signature of the Hessian, as similar to the case for a classical gradient system, determines the stability of an invariant orbit.  
The main technical content of the paper starts in Secection~\S\ref{sec:perturba}. The approach we use, as mentioned earlier, is to prove that all rigid motions of the formation can be obtained via target edge-length mismatches within a $k$-simplex. In order to do this, we start with a weaker statement. We show that if we consider arbitrary infinitesimal perturbations of the interaction control laws (these are perurbations in an infinite dimensional function space), then we can generate all rigid motions. This is in essence the content of Proposition~\ref{pro:linearspan} and Corollary~\ref{cor:submersion} in Section~\S\ref{sec:vertshift}. Said otherwise, we show that the infinite-dimensional space of perturbations of the interaction laws is mapped  surjectively to the space of infinitesimal rigid motions of formations. Based on this, we then show that there is a finite dimensional subspace of perturbations, namely the perturbations of interaction laws stemming from mismatches in edge-lengths, which generate all rigid motions.



\section{Controllability of Rigid Motions}\label{sec2}

\subsection{Preliminaries}

\subsubsection{Graphs, configurations and frameworks.}We denote by $P=\R^{nk}$ the {\bf configuration space} of a formation of $n$ agents in $\R^k$. We denote an element of $P$ by $p=(x_1,\ldots,x_n)$ where $x_i \in \R^k$ for all $i = 1,\ldots, n$. We call $p$ a configuration.  

The {\bf special Euclidean group} $\SE(k)$ (also known as the group of rigid motions) has a natural action over the configuration space. First, recall that each group element $\alpha$ of $\SE(k)$ can be represented by a pair  $(\theta, b)$ with $\theta$ a special orthogonal matrix and $b$ a vector in $\mathbb{R}^k$. With this representation, the multiplication of two group elements $\alpha_1= (\theta_1,b_1)$ and $\alpha_2 = (\theta_2,b_2)$ is given by
\begin{equation*}
\alpha_2\cdot \alpha_1 = (\theta_2 \,\theta_1,\, \theta_2 b_1 +  b_2).
\end{equation*}
A  group action of $\SE(k)$ on $P$ is given by  
\begin{equation}\label{eq:groupaction}
\alpha \cdot p := (\theta x_1 + b, \ldots, \theta x_n + b). 
\end{equation}
An element $\alpha \in SE(k)$ can also be thought of as an  {\it affine function} $\alpha:P\longrightarrow P$ sending $p$ to $\alpha \cdot p$. 

We denote by $O_p$ the {\bf orbit of $p$} for the group action of $\SE(k)$ on $P$, i.e., 
$$
O_p:= \left\{ \alpha\cdot p \mid  \alpha\in \SE(k)\right\}.
$$ 
Thus, $O_p$ is the set of configurations that are related by a rigid motion. The {\bf stabilizer} of $p$, denoted by $\stab(p)$, is a subgroup of $\SE(k)$ defined as the set of group elements that leave $p$ fixed through the above action: 
$$
\stab(p) := \left \{ \alpha \in \SE(k) \mid \alpha\cdot p = p \right \}. 
$$   
A subgroup of $\SE(k)$ is said to be {\bf trivial} if it contains {\em only} the identity element. 
If $\stab(p)$ is a trivial subgroup, then it is clear that $O_p$ is diffeomorphic to $\SE(k)$, which we denote by $O_p \approx \SE(k)$. 
We now introduce the notion of rank of a configuration:

\begin{definition}[Rank of a configuration]
Let $p = (x_1,\ldots, x_n)$, with $x_i\in \R^k$, be a configuration. The {\bf rank} of $p$ is the dimension of the span  of the vectors $\{x_2 - x_1,\ldots, x_n - x_1\}$.  If the rank of $p$ is $k$, then $p$ is of {\bf full rank}. 
\end{definition}

We note here that the rank of the configuration $p$ is independent of which vector~$x_i$ ($x_1$ in the above) is subtracted from the others. The rank of a configuration $p$ is known~\cite{chen2015controllability} to be the least dimension of a subspace of $\R^k$ in which $p$ can be embedded. With the definition above, we state the following result: 

\begin{lemma}\label{lem:trivialstabilizer}
If a configuration $p$ is of full-rank, then the stabilizer $\stab(p)$ is trivial, and hence $O_p \approx \SE(k)$. 
\end{lemma}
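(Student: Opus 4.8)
The plan is to reduce the whole statement to the single claim that $\stab(p)$ is trivial, because the diffeomorphism $O_p \approx \SE(k)$ then follows immediately from the observation recorded just before the lemma: a trivial stabilizer forces the orbit map to be a diffeomorphism onto $\SE(k)$. So the entire content to be proved is that the only $\alpha \in \SE(k)$ fixing $p$ is the identity $(\identity{k},\zeros)$.

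First I would take an arbitrary $\alpha = (\theta, b) \in \stab(p)$ and write out the fixing condition $\alpha \cdot p = p$ componentwise. By the definition of the group action in \eqref{eq:groupaction}, this is the system $\theta x_i + b = x_i$ for every $i = 1, \ldots, n$. The translation $b$ is a nuisance parameter, and the natural move is to eliminate it by subtracting the equation for $i = 1$ from the equation for each $i \ge 2$. This yields $\theta(x_i - x_1) = x_i - x_1$ for all $i = 2, \ldots, n$; that is, $\theta$ fixes each of the difference vectors $x_i - x_1$.

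Next I would invoke the full-rank hypothesis. By definition, full rank means that $\{x_2 - x_1, \ldots, x_n - x_1\}$ spans $\R^k$. Since a linear map fixing every vector of a spanning set fixes all of $\R^k$, we conclude $\theta = \identity{k}$. Substituting this back into any one of the original equations, say $\theta x_1 + b = x_1$, gives $b = \zeros$. Hence $\alpha = (\identity{k}, \zeros)$, so $\stab(p)$ is trivial.

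There is no serious obstacle here; once $b$ is eliminated the argument is a one-line linear-algebra observation. The only points requiring care are purely formal: that subtracting the reference equation is legitimate because the same pair $(\theta, b)$ appears in all $n$ equations, and that the passage from trivial stabilizer to $O_p \approx \SE(k)$ is exactly the orbit-stabilizer statement already asserted in the excerpt, so it need not be reproved. If one wanted this last step to be self-contained, one would note that for a smooth action the orbit map $\alpha \mapsto \alpha \cdot p$ induces a diffeomorphism from $\SE(k)/\stab(p)$ onto $O_p$, and with $\stab(p)$ trivial this is precisely $O_p \approx \SE(k)$.
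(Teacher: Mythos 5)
Your proof is correct. It reaches the same conclusion from the same hypothesis, but by a more elementary route than the paper: you eliminate the translation $b$ directly by subtracting the $i=1$ equation, reducing the problem to the observation that $\theta$ fixes the spanning set $\{x_2-x_1,\ldots,x_n-x_1\}$ and hence equals $\identity{k}$. The paper instead passes to homogeneous coordinates: it represents $\alpha=(\theta,b)$ as a $(k+1)\times(k+1)$ matrix $A$, forms the matrix $X$ whose columns are $(x_i,1)$ for the $k+1$ agents spanning $\R^k$, proves $X$ is nonsingular (which requires its own small contradiction argument, essentially the same elimination of the translation that you perform explicitly), and concludes from $AX=X$ that $A=I$. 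The two arguments are logically equivalent --- the nonsingularity of $X$ is exactly the affine independence your spanning-set step exploits --- but yours avoids the matrix machinery entirely, while the paper's representation of $\SE(k)$ by $(k+1)\times(k+1)$ matrices is reused later (e.g.\ in the exponential map \eqref{eq:matrixexponential} and in the proof of Proposition~\ref{pro:linearspan}, where the same matrix $X$ reappears), which is presumably why the authors introduce it here. Your closing remark on the orbit map $\SE(k)/\stab(p)\to O_p$ is the right justification for the final clause, which the paper likewise treats as standard.
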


\begin{proof}
A group element $\alpha = (\theta, b)\in \SE(k)$ can be represented by a matrix  as follows:
$$A = 
\begin{bmatrix}
\theta & b \\
0 & 1
\end{bmatrix}\in  \R^{(k+1)\times (k+1)}.
$$
The group multiplication is then the matrix multiplication. Since $p$ is of full-rank, without loss of generality, we can take the linear span of $\{x_2 - x_1,\ldots, x_{k+1} - x_1\}$ to be $\R^k$. Define the matrix $X\in\R^{(k+1)\times (k+1)}$ by setting its  $i$-th column to $(x_i,1)\in \R^{k+1}$. The matrix $X$ is not of full rank only if one of its columns (say, without loss of generality, the first) is a linear combination of the others: $\sum_{i=2}^{n} c_i (x_i,1)=(x_1,1)$ for some constants $c_i$'s. The equality of the last coordinates imposes that $\sum_{i=2}^{n} c_i =1$. Ignoring the last coordinate, we also have $\sum_{i=2}^{n} c_i x_i = x_1= \sum_{i=2}^{n} c_i x_1$ and thus $\sum_{i=2}^{k+1} c_i (x_i-x_1)=0$, which is a contradiction. The matrix $X$ is thus nonsingular.  Now, let $\alpha = (\theta, b)$ be in $\stab(p)$. Then, 
$
A X = X
$.   
Since $X$ is nonsingular, we conclude that $A = I$. 
\end{proof}

Let $G=(V,E)$ be an undirected graph with $n$ vertices, a configuration $p = (x_1,\ldots, x_n)$ can be viewed as an embedding of $G$ in $\R^n$ by assigning to vertex~$i$ the coordinate $x_i$. We call the pair $(G,p)$ a {\bf framework}. 
Let $G' = (V',E')$ be the subgraph of $G$ induced by $V'$. We denote by $(G',p')$, with $p'\in \R^{k |V'|}$,  the associated sub-framework. We call $p'$ the {\bf sub-configuration} associated with $G'$. 
Recall that a {\bf clique} of $G$ is a set of vertices $V'$ such that the induced subgraph $G' = (V',E')$ is a complete graph. We now extend the definition to a framework:

\begin{definition}[Framework with clique]
Let $G$ be a graph and $p \in \R^{nk}$. A framework $(G,p)$ is {with a clique} if there is a subgraph $G'$ of $G$ such that $G'$ is a complete graph with $(k+1)$ vertices. The associated sub-framework $(G',p')$ is a {\bf clique} of $(G,p)$.
 \end{definition}

 If, in addition, the sub-configuration $p'$ is of full rank, then $(G,p)$ is a framework with a full rank clique $(G',p')$.  For example, in the case $k=2$, a full-rank clique is a nondegenerate triangle, and in the three-dimensional case,   a nondegenerate tetrahedron.  
We also note that rigid transformations preserve linear independence of vectors. Thus, if a framework $(G,p)$ has a (full-rank) clique $(G',p')$, then clearly $(G,\, \alpha\cdot p)$ has a (full-rank) clique $(G',\alpha\cdot p')$ for any $\alpha\in \SE(k)$, 

\subsubsection{Equivariant systems. } We briefly review basic definitions regarding equivariant systems, and describe some of their properties. 
We start with the following definition: 
 
\begin{definition}[Equivariant system]\label{def:equivariantsys}
Consider an arbitrary dynamical system 
$
\dot p = f(p)
$ in a Euclidean space $P$. Let $\cal{A}$ be a Lie group acting smoothly on $P$. Denote by $d\alpha_p\,\alpha: T_pP\longrightarrow  T_pP$  the derivative of $\alpha$ at $x$.  
Then, the dynamical system is an {\bf  $\cal{A}$-equivariant system} if for any element $\alpha\in \cal{A}$ and any $p\in P$, we have
\begin{equation}\label{eq:vectorfieldrelation}
f(\alpha\cdot p) = d\alpha_p (f(p)).
\end{equation}
 Equivalently, if we let $\phi_t(p_0)$ be the solution of $\dot p=f(p)$ with initial condition $p_0$, then 
$$
\phi_t(\alpha\cdot p) = \alpha \cdot \phi_t(p),
$$
for all $p\in P$,  $\alpha\in \SE(k)$, and $t\ge 0$. 
\end{definition}


In our case, the special Euclidean group acts on the configuration space $P$ as an affine transformation~\eqref{eq:groupaction}. 
We show  that the formation system~\eqref{eq:Model1} is  an $\SE(k)$-equivariant system with respect to the group action introduced above. To this end, set  $y = (y_1,\ldots, y_n)\in \R^{kn}$, with $y_i\in \R^k$. From~\eqref{eq:groupaction}, the derivative $d\alpha_p$, at any $p\in P$ is the  linear map
$$
d\alpha_p (y) = (\theta y_1,\ldots, \theta y_n). 
$$
Now,  let $f(p) = (f_1(p),\ldots, f_n(p))$ be the vector field of system~\eqref{eq:Model1} at $p$, where 
 \begin{equation}\label{eq:vectorfield}
 f_i(p):= \sum_{v_j\in {\cal N}_i} f_{ij}(\| x_j - x_i \|)  (x_j - x _i), 
 \end{equation} 
is the $i$-th component of $f(p)$.  
We have the following fact:

\begin{lemma}\label{lem: equisys} 
System \eqref{eq:Model1} is $\SE(k)$-equivariant with respect to the group action~\eqref{eq:groupaction}. 
\end{lemma}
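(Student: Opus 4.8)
The plan is to verify the equivariance condition \eqref{eq:vectorfieldrelation} directly by computing both sides and checking they agree. The key observation that makes everything work is that the interaction terms $f_{ij}(\|x_j-x_i\|)(x_j-x_i)$ depend on the configuration only through the \emph{differences} $x_j-x_i$ and their \emph{norms}, both of which behave predictably under a rigid motion $\alpha=(\theta,b)$.

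First I would fix an arbitrary $\alpha=(\theta,b)\in\SE(k)$ and a configuration $p=(x_1,\ldots,x_n)$, and write $\alpha\cdot p = (\theta x_1+b,\ldots,\theta x_n+b)$ according to \eqref{eq:groupaction}. The main computation is then to evaluate the $i$-th component $f_i(\alpha\cdot p)$ using the formula \eqref{eq:vectorfield}. The two facts I would use are: (i) the translation $b$ cancels in each difference, since $(\theta x_j + b)-(\theta x_i + b) = \theta(x_j - x_i)$; and (ii) the norm is preserved by $\theta$, since $\theta$ is orthogonal, so $\|(\theta x_j+b)-(\theta x_i+b)\| = \|\theta(x_j-x_i)\| = \|x_j-x_i\|$. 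Consequently the scalar coefficient $f_{ij}(\|\cdot\|)$ is unchanged, and one can factor $\theta$ out of the whole sum.

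Carrying this out, I would write
\begin{equation*}
f_i(\alpha\cdot p) = \sum_{v_j\in{\cal N}_i} f_{ij}\big(\|x_j-x_i\|\big)\,\theta(x_j-x_i) = \theta \sum_{v_j\in{\cal N}_i} f_{ij}\big(\|x_j-x_i\|\big)(x_j-x_i) = \theta\, f_i(p),
\end{equation*}
so that $f(\alpha\cdot p) = (\theta f_1(p),\ldots,\theta f_n(p))$. To close the argument I would recall that the derivative of the affine map $\alpha$ at $p$ was computed earlier in the excerpt to be $d\alpha_p(y) = (\theta y_1,\ldots,\theta y_n)$; applying this to $y = f(p)$ gives exactly $d\alpha_p(f(p)) = (\theta f_1(p),\ldots,\theta f_n(p))$. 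Comparing the two displays yields $f(\alpha\cdot p) = d\alpha_p(f(p))$, which is precisely \eqref{eq:vectorfieldrelation}, establishing $\SE(k)$-equivariance.

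I do not expect any serious obstacle here; the proof is a short and essentially routine verification. The only point requiring a small amount of care is the clean separation of the two invariance properties being exploited---translation-invariance of the differences and rotation-invariance of the norm---and making sure the orthogonality of $\theta$ is invoked explicitly to preserve the norm. Everything else is bookkeeping, and the result follows immediately once the $\theta$ is factored out of the sum.
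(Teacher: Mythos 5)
Your proof is correct and follows essentially the same route as the paper's: verify $f_i(\alpha\cdot p)=\theta f_i(p)$ by noting that the translation cancels in the differences and the orthogonal matrix $\theta$ preserves the norms, then factor $\theta$ out of the sum and identify the result with $d\alpha_p(f(p))$. The only difference is that you spell out the final identification with the derivative $d\alpha_p$ explicitly, which the paper leaves implicit.
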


\begin{proof}
Let $\alpha = (\theta, b)\in \SE(k)$;  it suffices to show that 
$$
f_i(\alpha\cdot p) = \theta f_i(p), \hspace{10pt} \forall\, i=1,\ldots. n.
$$
Note that 
$$
\| (\theta x_j + b) - (\theta x_i + b)  \| = \|x_j - x_i\|,
$$
and hence, 
$$
f_i(\alpha \cdot p) = \theta \sum_{j\in {\cal N}_i} f_{ij}(\| x_j - x_i\|) (x_j - x_i) = \theta f_i(p).
$$\,
\end{proof}


We now investigate equilibrium points of equivariant dynamics. Let $p$ be  an equilibrium of system~\eqref{eq:Model1}, i.e., $f(p) = 0$.  Then, it is immediate from~\eqref{eq:vectorfieldrelation} that $f(\alpha\cdot p) = 0$ for any $\alpha \in \SE(k)$. In other words, any configuration in the orbit $O_p$ is an equilibrium of~\eqref{eq:Model1}. Since~\eqref{eq:Model1} is a gradient system with $\Phi$ the potential function (defined in~\eqref{eq:Potential}), an equilibrium of~\eqref{eq:Model1} is a {\bf critical point} of $\Phi$. We can thus call $O_p$ {\bf a critical orbit} of $\Phi$.  
Furthermore, let $H_p$ be the {\bf Hessian} of $\Phi$ at $p$: 
\begin{equation}\label{eq:Hess1}
H_p:= \frac{\partial^2 \Phi(p)}{\partial p^2}
\end{equation}
The following lemma presents  well-known facts about the Hessian matrix $H_p$: 

\begin{lemma}\label{lem:Hessianinv}
Let $\Phi:P \longrightarrow \R$ be  a function invariant  under a Lie-group action over a Euclidean space. Denote by $m$ the dimension of a critical orbit ${O}_p$  and  by  $H_p$ the Hessian of $\Phi$ at $p$.  Then for any $p' \in O_p$, the  Hessian matrices $H_{p'}$ and $H_p$ are related by a similarity transformation. In particular, $H_{p'}$ and $H_p$ have the same eigenvalues.    In addition,  $H_{p}$  has at least $m$ zero eigenvalues, and the null space of $H_{p}$ contains the tangent space of $O_p$ at $p$. 
\end{lemma}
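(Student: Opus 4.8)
The plan is to prove the three assertions of Lemma~\ref{lem:Hessianinv} in the order they are stated, exploiting only the invariance $\Phi(\alpha\cdot p) = \Phi(p)$ for all $\alpha$ in the Lie group $\mathcal{A}$ together with the chain rule. For the similarity statement, I would fix $p'\in O_p$, write $p' = \alpha\cdot p$ for some group element $\alpha$, and differentiate the identity $\Phi(\alpha\cdot p) = \Phi(p)$ twice. Since the action \eqref{eq:groupaction} is affine, each $\alpha$ is represented by a linear-plus-constant map whose derivative $d\alpha_p$ is the constant block-diagonal linear map $y\mapsto(\theta y_1,\ldots,\theta y_n)$; I will abbreviate this constant matrix as $A_\alpha$. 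Differentiating $\Phi\circ\alpha = \Phi$ once gives $A_\alpha^\top\, \gradient\Phi(\alpha\cdot p) = \gradient\Phi(p)$, and differentiating a second time gives $A_\alpha^\top\, H_{\alpha\cdot p}\, A_\alpha = H_p$. Because $\theta$ is orthogonal, $A_\alpha$ is an orthogonal matrix, so this reads $H_p = A_\alpha^\top H_{p'} A_\alpha = A_\alpha^{-1} H_{p'} A_\alpha$, which is exactly a similarity transformation; hence $H_p$ and $H_{p'}$ share the same eigenvalues.

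For the remaining two claims I would work at the fixed point $p$ and consider the tangent space $T_pO_p$ to the orbit. The key observation is that tangent vectors to the orbit are velocity vectors of curves $t\mapsto \alpha(t)\cdot p$ obtained from smooth curves $\alpha(t)$ in $\mathcal{A}$ through the identity; such velocities are precisely the infinitesimal generators of the group action evaluated at $p$. To show $T_pO_p$ lies in the null space of $H_p$, I would differentiate the first-order invariance identity $\gradient\Phi(\alpha\cdot p) = A_\alpha^{-\top}\gradient\Phi(p)$ — or more directly the relation $\gradient(\Phi\circ\alpha)(p)=\gradient\Phi(p)$ obtained above — along such a curve $\alpha(t)$ with $\alpha(0)$ the identity, and evaluate at $t=0$. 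Using that $p$ is a critical point, so $\gradient\Phi(p)=0$, the left side produces $H_p\,\xi$ where $\xi\in T_pO_p$ is the orbit-tangent velocity, and the right side vanishes; this yields $H_p\,\xi = 0$ for every $\xi\in T_pO_p$. Thus $T_pO_p\subseteq \ker H_p$.

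Finally, the eigenvalue count follows by dimension: since $O_p$ is a smooth submanifold of dimension $m$, its tangent space $T_pO_p$ has dimension $m$, and an $m$-dimensional subspace of the null space forces $H_p$ to have at least $m$ zero eigenvalues (counted with multiplicity). I expect the only genuine subtlety — the point worth stating carefully rather than the bulk calculation — to be the passage from the finite-group-element similarity relation to the infinitesimal statement: one must be sure that every tangent vector of $O_p$ arises as $\tfrac{d}{dt}\big|_{t=0}\alpha(t)\cdot p$ for a differentiable curve $\alpha(t)$ in $\mathcal{A}$, which holds because $O_p$ is the image of the orbit map $\alpha\mapsto\alpha\cdot p$ and the group is a Lie group acting smoothly. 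The affine, hence smooth and orthogonal-derivative, structure of the $\SE(k)$ action makes each differentiation step routine, so the argument is essentially two applications of the chain rule plus the critical-point condition $\gradient\Phi(p)=0$.
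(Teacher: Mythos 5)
Your proof is correct. The paper itself offers no argument for this lemma --- it is stated as a collection of ``well-known facts'' and the text moves on immediately --- so there is no in-paper proof to compare against; your write-up supplies exactly the standard argument one would expect: differentiate $\Phi\circ\alpha=\Phi$ twice to get $H_p=A_\alpha^\top H_{p'}A_\alpha$ (the second-derivative-of-$\alpha$ term being absent because the action is affine, or, in general, because $\gradient\Phi(p')=0$ on a critical orbit), then differentiate $\gradient\Phi(\alpha(t)\cdot p)\equiv 0$ along a curve through the identity to get $T_pO_p\subseteq\ker H_p$, and count dimensions. The only point worth making explicit is that the lemma as stated assumes only ``a Lie-group action over a Euclidean space,'' under which the relation $H_p=A_\alpha^\top H_{p'}A_\alpha$ is a priori a congruence (preserving signature and the nullity, hence still enough for the zero-eigenvalue count) rather than a similarity; the ``same eigenvalues'' conclusion genuinely requires $A_\alpha$ orthogonal, i.e.\ that the group acts by isometries. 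You use this correctly for the $\SE(k)$ action, where $d\alpha_p(y)=(\theta y_1,\ldots,\theta y_n)$ with $\theta$ orthogonal, but it deserves a sentence since it is the one hypothesis beyond bare invariance that the similarity claim rests on.
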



It follows from Lemma~\ref{lem:Hessianinv} that $H_p$ and $H_{p'}$ have the same number of zero eigenvalues if $p$ and $p'$ belong to the same critical orbit, and moreover,  $\dim O_p$ is the 
least number of zero eigenvalues of the Hessian matrix at~$p$. Following Lemma~\ref{lem:Hessianinv}, we have the following definition: 

\begin{definition}\label{def:firstdefforexp}
A critical orbit $O_p$ of the potential function $\Phi$ (defined in~\eqref{eq:Potential}) is {\bf nondegenerate} if there are exactly $\dim O_p$ zero eigenvalues of the Hessian matrix $H_p$. If all the other eigenvalues are positive (note that $H_p$ is symmetric), then $O_p$ is {\bf exponentially stable}.   
\end{definition}

\subsection{Formation system with a clique and  $\epsilon$-controllability of rigid motions}\label{Frameworkwithclique}
Let $q$ be a target configuration. We assume for the remainder of the paper that the framework $(G, q)$ has a {\em full rank clique} $(G^*,q^*)$. For a vertex~$i$ of the complete graph $G^*$, we let $\cal{N}^*_i$ be the set of neighbors of~$i$ in $G^*$.   
Recall that for a configuration $p$, we have used $f(p) = (f_1(p),\ldots, f_n(p))$, with $f_i(p) \in \R^k$, the vector field of~\eqref{eq:Model1} at $p$.  Now, consider the following  modified formation control model (compared with~\eqref{eq:Model1}): 
 \begin{equation}\label{eq:ControlModel}
\dot {x}_i = 
\left\{
\begin{array}{ll}
f_i(p)+ \sum_{j \in \cal{N}^*_i } u_{ij}(t) (x_j - x_i)   & \text{ if } i\in V^* \\
f_i(p) & \text{ otherwise},
\end{array} \right.
\end{equation}
where each $u_{ij}$ is a scalar control. We note that the control inputs $u_{ij}$'s are not necessarily reciprocal, i.e., we do {\em not} require $u_{ij} = u_{ji}$. Thus, the multi-agent system~\eqref{eq:ControlModel} is controlled via the interactions among the agents in the sub-configuration $q^*$. 
The total number of control inputs is thus $k(k+1)$, which is relatively small compared to the total number of agents (which is $n$).  For example, in the two (resp. three) dimensional case, we have that the number of control inputs is $6$ (resp. $12$), but the number $n$ of agents can be arbitrarily large.  
We simply let $u := (u_{ij})$ be the ensemble of the controls $u_{ij}$'s, and $u[0,T]$ be the control~$u$ over the  interval $[0,T]$.  We call~\eqref{eq:ControlModel} a {\bf formation system with a clique}.  

 
We now formalize the notion of controllability used in this paper. Let $P$ be an arbitrary Euclidean space, and $M$ be a smooth submanifold of $P$.  We say that $M$ is {\bf path-connected} if for any two points $p_0,\ p_1\in M$, there is a continuous curve $p: [0,1]\longrightarrow M$, with $p(0) = p_0$ and $p(1) = p_1$. In our case, $P$ is the configuration space and $M$ is $O_q$. 
Since $q$ is of full rank (as $q^*$ is), from Lemma~\ref{lem: equisys}, we have $O_q \approx \SE(k)$, and hence $O_q$ is path-connected. 
Next, we let $p$ be a configuration in $P$; we define the distance between $p$ and the orbit $O_{q}$ as
$$
d(p, O_q) := \inf\left \{\| p - q'\| \mid  q'\in O_q  \right \}.
$$  
We say that $p$ is {\bf $\bm{\epsilon}$-close to $O_q$} if $d(p,O_q) < \epsilon$. 
We now introduce $\epsilon$-controllability:

\begin{definition}[$\epsilon$-controllability] Let $P$ be an arbitrary Euclidean space, and $M$ be a path-connected smooth submanifold of $P$. A control system $\dot p = f(p,u)$, defined over $P$, is {\bf $\epsilon$-controllable over $M$} if for any two points $p_0, p_1$ in $M$, and any error tolerance $\epsilon>0$, there is a time $T>0$ and an admissible control $u[0,T]$ such that the solution $p(t)$,  from the initial condition $p_0$ and with the control $u[0,T]$, is $\epsilon$-close to $M$ for all $t\in [0,T]$, and moreover, $\|p(T) - p_1 \| < \epsilon$.   
\end{definition}
 
We now state the first main result of the paper.

\begin{theorem}\label{thm:Main}
Let $G$ be a  rigid graph, and $(G,q)$ be a framework with a full-rank clique. If ${O}_{q}$ is a critical orbit of $\Phi$ (defined in~\eqref{eq:Potential}) and is exponentially stable,  then system~\eqref{eq:ControlModel} is $\epsilon$-controllable over $O_q$. Moreover, for any two configurations $q_0$ and $q_1$ in $O_q$, there is a {\em constant} control law $u[0,T]$ that steers the system from $q_0$ to a configuration within an  $\epsilon$-ball of $q_1$ in $P$. 
\end{theorem}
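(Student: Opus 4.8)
The plan is to prove the stronger ``constant-control'' claim, which immediately yields $\epsilon$-controllability over $O_q$: the resulting trajectory connects $q_0$ to within $\epsilon$ of $q_1$ while staying $\epsilon$-close to $O_q$ throughout. So fix $q_0,q_1\in O_q$. Since $q$ (hence $q_0$) is of full rank, Lemma~\ref{lem:trivialstabilizer} gives $O_q\approx\SE(k)$, so there is a unique $\alpha_1\in\SE(k)$ with $q_1=\alpha_1\cdot q_0$. I would first record the standard fact that $\exp:\se(k)\to\SE(k)$ is surjective: writing $\alpha=(\theta,b)$, choose a principal logarithm $\Omega$ of $\theta$ with eigenvalues of modulus $\le\pi$, which makes $A(\Omega):=\sum_{n\ge0}\Omega^n/(n+1)!$ invertible, and set the translation generator to $A(\Omega)^{-1}b$. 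Thus $\alpha_1=\exp(\eta)$ for some $\eta\in\se(k)$, and the idea is to realize $\alpha_1$ as the time-$T$ map of a rigid motion with the small constant generator $\eta/T$, with $T$ taken large.

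The structural engine is the observation that, \emph{for each fixed constant control $u$}, the controlled vector field $f^u$ of~\eqref{eq:ControlModel} is again $\SE(k)$-equivariant: the added term $\sum_{j\in\mathcal N_i^*}u_{ij}(x_j-x_i)$ has exactly the form handled in Lemma~\ref{lem: equisys}, so $f^u(\alpha\cdot p)=d\alpha_p\,f^u(p)$ for all $\alpha$, and the flow of~\eqref{eq:ControlModel} commutes with the $\SE(k)$-action. Because $O_q$ is an exponentially stable critical orbit (Definition~\ref{def:firstdefforexp}), it is a normally attracting invariant manifold of the unperturbed system; for $\|u\|$ small, persistence of such manifolds yields an $\SE(k)$-invariant manifold $N_u$ that is $O(\|u\|)$-close to $O_q$. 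Being $\SE(k)$-invariant and of the same dimension as a single orbit, $N_u$ is a single group orbit $O_{p_0(u)}$, and a flow on $O_{p_0(u)}\approx\SE(k)$ commuting with left translations must be right translation by a one-parameter subgroup; hence on $N_u$ the dynamics is the rigid motion $\alpha\cdot p_0(u)\mapsto \alpha\exp(t\xi)\cdot p_0(u)$ with generator $\xi=\Xi(u)\in\se(k)$. This $\Xi$ is precisely the perturbation-to-generator map, whose local surjectivity near $u=0$ I would invoke from Proposition~\ref{pro:linearspan} and Corollary~\ref{cor:submersion}.

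To assemble the control, transport the construction to $q_0$: write $q_0=\beta\cdot q$ and note that the relative equilibrium based at $\beta\cdot p_0(u)$ moves the point with conjugated generator $\operatorname{Ad}_\beta\xi$. Since $\operatorname{Ad}_\beta$ is a linear isomorphism of $\se(k)$ and $\Xi$ is locally surjective, for $T$ large enough the small target $\operatorname{Ad}_{\beta^{-1}}(\eta/T)$ is attained by some small constant $u^*$, i.e. $\operatorname{Ad}_\beta\Xi(u^*)=\eta/T$. Applying the constant control $u^*$ from the initial condition $q_0$: since $q_0$ lies within $O(\|u^*\|)$ of $N_{u^*}$, the trajectory converges exponentially to $N_{u^*}$ and is carried along the rigid motion, so it stays within $O(\|u^*\|)$ of $O_q$ for all $t\ge0$ and satisfies $p(T)\approx\exp\!\big(T\cdot\tfrac{\eta}{T}\big)\cdot q_0=\exp(\eta)\cdot q_0=q_1$ up to an $O(\|u^*\|)=O(1/T)$ error. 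Choosing $T$ large makes every error below $\epsilon$, proving both assertions.

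The main obstacle I anticipate is the quantitative tracking estimate in the last step: one must show the trajectory stays \emph{uniformly} $\epsilon$-close to $O_q$ on all of $[0,T]$ even as $T\to\infty$, and that the terminal error does not accumulate. The facts that make this work are (i) normal hyperbolicity, giving exponential contraction transverse to $N_{u^*}$ uniformly in the moving frame, and (ii) that the transporting maps $\exp(t\xi)$ are isometries, so transverse displacements of size $O(\|u^*\|)$ are neither amplified nor accumulated under transport. The remaining technical heart—well-definedness and local surjectivity of $\Xi:u\mapsto\xi$—is exactly what Proposition~\ref{pro:linearspan} and Corollary~\ref{cor:submersion} are designed to supply, which is why I would treat them as the crux established separately.
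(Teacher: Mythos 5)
Your proposal is correct and follows essentially the same route as the paper: realize the generator $\eta/T$ of the target rigid motion through the locally surjective map from constant (vertical-shift) perturbations to $\se(k)$ (Proposition~\ref{pro:linearspan}, Corollary~\ref{cor:submersion}), flow for time $T$ along the resulting exponentially stable invariant orbit, and control the error via exponential attraction to the stable manifold together with the isometry of the group action. The only cosmetic difference is that you handle a general basepoint $q_0$ via conjugation by $\operatorname{Ad}_\beta$, where the paper simply takes $q_0=q$ without loss of generality.
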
 


 
\section{Invariant Orbits and Local Perturbations}

\subsection{Invariant orbit}\label{sec:invariantorbit}
In this subsection, we derive  some relevant properties of invariant orbits of equivariant dynamical systems. Recall that for an arbitrary dynamical system $\dot p = f(p)$,  we have introduced $\phi_t(p)$ to denote the solution at time $t \geq 0$ with  initial condition~$p$.   We start with the definition of an $f$-invariant orbit: 

\begin{definition}[$f$-invariant orbit] Let $\cal{A}$ be a Lie group acting on a Euclidean space $P$. Let $\dot p = f(p)$ be an $\cal{A}$-equivariant system defined over $P$. The orbit $O_p= \cal{A} \cdot p$ is said to be {\bf $f$-invariant} if for any initial condition $p'\in O_p$, the trajectory $\phi_t(p')$  remains in $O_p$: $p' \in O_p \Rightarrow \phi_t(p') \in O_p$ for all $t \ge 0$
\end{definition}

Note that if $f$ is the gradient of  an $\cal{A}$-invariant potential function $\Phi$, then a critical orbit $O_p$ of $\Phi$ is $f$-invariant: indeed, we have that $f(p') = 0$ for all $p'\in O_p$, and hence $\phi_t(p') = p'$ for all $t\ge 0$ and all $p'\in O_p$. Conversely, any invariant orbit of $f$ has to be a critical orbit of $\Phi$. 

The interaction laws considered in~\eqref{eq:Model1} were always reciprocal, which led to the fact that~\eqref{eq:Model1} is a gradient system. The control methodology we propose in this paper, however, requires to break such a reciprocity, as we saw in~\eqref{eq:ControlModel}.  In the remainder of this subsection, we thus {relax} the condition imposed on~\eqref{eq:Model1} that the interactions between neighboring agents are reciprocal and study the  system
\begin{equation}\label{eq:Model2}
\dot x_i = \sum_{j\in V_i} f_{ij}(\|x_j - x_i\|)(x_j -x_i),
\end{equation} 
which is similar to~\eqref{eq:Model1}, but {\it without} the requirement that $f_{ij}$ is equal to $f_{ji}$. System~\eqref{eq:Model2} can be shown, proceeding as in Lemma~\ref{lem: equisys}, to be an $\SE(k)$-equivariant system, but it does not need to be a gradient system. 


We now state some basic facts about the vector field $f$ when restricted to  $O_p$. Denote by $\mathfrak{so}(k)$ the vector space of $k$-by-$k$ skew-symmetric matrices, that is the { \it Lie algebra}~\cite{helgason2001differential} associated with the special orthogonal group ${\rm SO}(k)$.  Denote by  $\exp(\cdot)$ the matrix exponential. Then, $\exp(\cdot)$ maps $\so(k)$ {\it onto} the group ${\rm SO}(k)$. For a matrix $\Omega\in \mathfrak{so}(k)$ and a real number $t$, we adopt the notation used in~\cite{Helmke_MSA_14} and define
$$
\frac{\exp(\Omega t) - I}{\Omega} := It + \frac{\Omega}{2!}t^2 + \frac{\Omega^2}{3!}t^3 + \cdots.
$$   
The expression above is well defined for all $\Omega \in \so(k)$ even if $\Omega = 0$. 
We further define $\se(k) := \so(k)\times \R^k$; it is the Lie algebra of the special Euclidean group $\SE(k)$. Let $T_{p'}O_{p'}$ be the tangent space of $O_{p}$ at $p'$ for $p'\in O_p$.  Recall that a group element $(\theta, b)\in \SE(k)$ can be represented by a matrix  as follows:
$$
\begin{bmatrix}
\theta & b \\
0 & 1
\end{bmatrix}\in  \R^{(k+1)\times (k+1)}.
$$
Similarly, we can represent an element  $ (\Omega, v) \in \se(k)$ by 
$$
\begin{bmatrix}
\Omega & v \\
0 & 0
\end{bmatrix} \in \R^{(k+1)\times (k+1)}.
$$
By computation, the matrix exponential map $\exp: \se(k) \to \SE(k)$ is given by
\begin{equation}\label{eq:matrixexponential}
\exp\left (
\begin{bmatrix}
\Omega & v \\
0 & 0
\end{bmatrix}
 \right ) = 
 \begin{bmatrix}
\exp(\Omega) &  \frac{\exp(\Omega) - I}{\Omega} \ v \\
0 & 1
\end{bmatrix}.
\end{equation}
This exponential map is also surjective~\cite{helgason2001differential}. 
We next have the following fact (see~\cite{Helmke_MSA_14} for a similar result):

\begin{proposition}\label{pro:flowoninvorb}
Let $p=(x_1,\ldots,x_n)$ be a full-rank configuration, and ${O}_p$ be an $f$-invariant orbit of system \eqref{eq:Model2}. Then, the following three properties hold:
\begin{enumerate}
\item[1.] For each $p'=(x_1',\ldots,x_n') \in O_p$, the vector field $f(p')$ lies in $T_{p'}O_{p}$. In particular, there is a unique element $(\Omega_{p'}, v_{p'}) \in \se(k)$ such that
\begin{equation}\label{eq:invariantorbit1}
f_i(p') = \Omega_{p'} x_i' + v_{p'}, \hspace{10pt} \forall i\in V. 
\end{equation}
\item[2.] If $p' = \alpha\cdot p$ for $\alpha = (\theta, b) \in \SE(k)$, then
\begin{equation}\label{eq:invariantorbit2}
\left\{
\begin{array}{l}
\Omega_{p'} = \theta\, \Omega_{p} \, \theta^\top, \\
v_{p'} = \theta\, v_{p} - \theta\, \Omega_{p} \, \theta^\top  b.
\end{array}
\right. 
\end{equation} 
\item[3.]  The solution $p'(t) = (x'_1(t),\ldots, x'_n(t))$ for an initial condition $p' = (x'_1,\ldots, x'_n)\in O_p$ is given by 
 $$p'(t) = \exp\((\Omega_{p'}, v_{p'})t\)\cdot p'.$$
 More specifically, using~\eqref{eq:matrixexponential}, we obtain that
\begin{equation}\label{eq:soloninvariantrobit}
x'_i(t) = \exp(\Omega_{p'}t) \ x'_i \ + \    \frac{\exp(\Omega_{p'} t) - I}{\Omega_{p'}} \ v_{p'}. 
\end{equation}
\end{enumerate}
\,
\end{proposition}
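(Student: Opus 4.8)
The plan is to prove the three parts in order, each feeding into the next. For Part~1, I would begin from the $f$-invariance hypothesis: since the trajectory through any $p' \in O_p$ stays in $O_p$, the vector $f(p')$ must be tangent to $O_p$ at $p'$, i.e. $f(p') \in T_{p'}O_p$. The next step is to identify this tangent space explicitly. Differentiating the curve $t \mapsto \exp((\Omega,v)t)\cdot p'$ at $t=0$, using the exponential formula~\eqref{eq:matrixexponential} together with $\frac{d}{dt}\big|_{t=0}\frac{\exp(\Omega t)-I}{\Omega}v = v$, one finds $T_{p'}O_p = \{(\Omega x_1' + v,\ldots,\Omega x_n' + v) : (\Omega,v)\in \se(k)\}$. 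Hence there exists $(\Omega_{p'},v_{p'})\in\se(k)$ with $f_i(p') = \Omega_{p'}x_i' + v_{p'}$ for all $i$. For uniqueness I would use that $p'$ is full-rank (rigid motions preserve rank, so $p'$ has the same rank as $p$): if $(\Omega - \Omega')x_i' + (v-v') = 0$ for all $i$, subtracting the $i=1$ equation gives $(\Omega-\Omega')(x_i'-x_1')=0$, and since $\{x_i'-x_1'\}$ spans $\R^k$ we conclude $\Omega=\Omega'$ and then $v=v'$.

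For Part~2, the idea is to transport the generator at $p$ to the generator at $p' = \alpha\cdot p$ using equivariance. By the analogue of Lemma~\ref{lem: equisys} for system~\eqref{eq:Model2}, $f_i(\alpha\cdot p) = \theta f_i(p)$. Writing $x_i = \theta^\top(x_i' - b)$ and substituting $f_i(p) = \Omega_p x_i + v_p$ from Part~1, I would expand
\[
f_i(p') = \theta(\Omega_p(\theta^\top x_i' - \theta^\top b) + v_p) = (\theta\Omega_p\theta^\top) x_i' + (\theta v_p - \theta\Omega_p\theta^\top b).
\]
Matching this against $f_i(p') = \Omega_{p'}x_i' + v_{p'}$ and invoking the uniqueness from Part~1 yields~\eqref{eq:invariantorbit2}; one checks in passing that $\theta\Omega_p\theta^\top$ is again skew-symmetric, so the result indeed lies in $\se(k)$.

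For Part~3, I would verify that the curve $\gamma(t):=\exp((\Omega_{p'},v_{p'})t)\cdot p'$ solves $\dot p = f(p)$ with $\gamma(0)=p'$, and then conclude by uniqueness of solutions (each $f_{ij}$ is $C^1$, so $f$ is locally Lipschitz). Writing $\theta(t):=\exp(\Omega_{p'}t)$ and $b(t):=\frac{\exp(\Omega_{p'}t)-I}{\Omega_{p'}}v_{p'}$, so that $\gamma_i(t)=\theta(t)x_i'+b(t)$, I would compute $\dot\gamma_i(t) = \Omega_{p'}\theta(t)x_i' + \theta(t)v_{p'}$, using $\dot\theta(t)=\Omega_{p'}\theta(t)$ and $\dot b(t) = \theta(t)v_{p'}$. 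On the other hand, since $\gamma(t)=\alpha(t)\cdot p'$ with $\alpha(t)=(\theta(t),b(t))$, Part~2 gives $f_i(\gamma(t)) = \Omega_{\gamma(t)}\gamma_i(t)+v_{\gamma(t)}$ with $\Omega_{\gamma(t)}=\theta(t)\Omega_{p'}\theta(t)^\top$ and $v_{\gamma(t)}=\theta(t)v_{p'}-\theta(t)\Omega_{p'}\theta(t)^\top b(t)$; after substituting $\gamma_i(t)=\theta(t)x_i'+b(t)$ the two $b(t)$-terms cancel and, using that $\theta(t)$ commutes with $\Omega_{p'}$, this collapses to exactly $\Omega_{p'}\theta(t)x_i'+\theta(t)v_{p'}=\dot\gamma_i(t)$. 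The explicit formula~\eqref{eq:soloninvariantrobit} then follows by reading off the components of~\eqref{eq:matrixexponential}.

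The main obstacle is the Part~3 computation: one must correctly differentiate the $\frac{\exp(\Omega t)-I}{\Omega}$ term and then show that evaluating $f$ along the one-parameter-subgroup curve—via the transformation law of Part~2—reproduces precisely this derivative, with the cancellation of the $b(t)$-terms and the commutativity $\theta(t)\Omega_{p'}=\Omega_{p'}\theta(t)$ being the crucial simplifications. The full-rank hypothesis, needed only for the uniqueness in Part~1, is what makes the generator $(\Omega_{p'},v_{p'})$ well defined and hence makes the entire argument go through.
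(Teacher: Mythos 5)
Your proof is correct and follows essentially the same route as the paper's: identify $T_{p'}O_p$ as $\{(\Omega x_1'+v,\ldots,\Omega x_n'+v)\}$, use equivariance to transport the generator, and verify the exponential curve solves the ODE by differentiating it and matching against the transformation law of Part~2. You actually supply more detail than the paper in two places it leaves implicit --- the full-rank argument for uniqueness of $(\Omega_{p'},v_{p'})$ and the appeal to uniqueness of solutions of the ODE --- both of which are welcome additions.
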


\begin{proof}
The first item follows from the fact that the tangent space of $O_p$ at $p' = (x'_1,\ldots, x'_n)$ is given by
$$
T_{p'}O_p = \left \{(\Omega x'_1+ v,\ldots, \Omega x'_n + v)  \mid (\Omega,v) \in \se(k) \right\}.
$$ 
The second item follows from the fact that system~\eqref{eq:Model1} is $\SE(k)$-equivariant: if $p' = \alpha\cdot p$ with $\alpha = (\theta, b)$ 
$$
f_i(p') = \theta f_i(p), \hspace{10pt} \forall\, i\in V,
$$ 
which, recalling the definition of the group action~\eqref{eq:groupaction}, yields~\eqref{eq:invariantorbit2}. The third item directly follows computation; indeed we differentiate~\eqref{eq:soloninvariantrobit} with respect to~$t$ and obtain that
$$
\dot x'_i(t) = \(\theta\, \Omega_{p'} \, \theta^\top\) x'_i(t) + \(\theta\, v_{p'} - \theta\, \Omega_{p'} \, \theta^\top  b\),
$$
with $(\theta, b)$ given by
$$
\theta := \exp(\Omega_{p'} t) \hspace{5pt} \text{ and } \hspace{5pt} b :=  \frac{\exp(\Omega_{p'} t) - I}{\Omega_{p'}} \ v_{p'}. 
$$ 
On the other hand, we know from~\eqref{eq:invariantorbit2} that $(\Omega_{p'(t)}, v_{p'(t)})$ and $(\Omega_{p'}, v_{p'})$ are related by 
\begin{equation*}
\left\{
\begin{array}{l}
\Omega_{p'(t)} = \theta\, \Omega_{p'} \, \theta^\top, \\
v_{p'(t)} = \theta\, v_{p'} - \theta\, \Omega_{p'} \, \theta^\top  b,
\end{array}
\right. 
\end{equation*} 
One thus have that
$$
\dot x'_i(t) = \Omega_{p'(t)} x'_i(t) + v_{p'(t)},
$$ 
which completes the proof. 
\end{proof}


From Proposition~\ref{pro:flowoninvorb}, the vector field over an $f$-invariant orbit $O_p$, for $p$ a full-rank configuration, is entirely determined by its value $f(p)$ at a single configuration~$p$. Moreover, $f(p)$ can be uniquely represented by an element $(\Omega_p,v_p)\in \se(k)$.  Thus, with a slight abuse of notation, we denote by 
\begin{equation}\label{eq:notation1}
(\Omega_p,v_p)\cdot p:=\left( \Omega_p x_1 + v_p,\ldots, \Omega_p x_n + v_p\right), 
\end{equation} 
the vector field $f$ at $p$. 

We have argued at the beginning of this subsection that if $f_{ij} = f_{ji}$, then an $f$-invariant orbit has to be a critical orbit. We have also defined in~\eqref{eq:Hess1} the Hessian matrix at a point of a critical (and hence, an $f$-invariant) orbit. The eigenvalues of the Hessian matrix at any such point of the orbit tell whether  the orbit is  exponentially stable or not. 
In order to study the stability properties of an invariant orbit of a generalized formation system~\eqref{eq:Model2}, we need to extend the definition of ``Hessian'' to non-gradient systems. To do so, we introduce  a matrix which agrees with the original definition~\eqref{eq:Hess1} if the interactions among neighboring agents are reciprocal. The definition can be shown to be equivalent to a definition introduced in the seminal paper~\cite{field1980equivariant}. By a slight abuse of terminology, we refer to this matrix as the Hessian matrix as well. 
Recall that from Proposition~\ref{pro:flowoninvorb}, if $O_p$ is an invariant orbit and $p$ is a full-rank configuration, there exists a unique $(\Omega, v)\in \se(k)$ so that  $f(p) = (\Omega, v)\cdot p$.  Define an auxiliary vector field $h$ on $P$ as follows: for a configuration $p' = (x'_1,\ldots, x'_n)$,  let
\begin{equation}
\label{eq:defauxilh}
h(p') := (\Omega x'_1 + v,\ldots, \Omega x'_n + v).  
\end{equation}
We then have the following definition:  
 
\begin{definition}[Hessian at an $f$-invariant orbit] Let $p$ be a full rank configuration,  $O_p$ be an $f$-invariant orbit and $(\Omega, v)\in \se(k)$ so that  $f(p) = (\Omega, v)\cdot p$. Let $h$ be as in~\eqref{eq:defauxilh} and set $$\widehat f := f-h.$$
We define the Hessian at $p$ to be the negative of the Jacobian of $\widehat f$ at $p$, i.e., 
\begin{equation}\label{eq:Hess2}
\Hess(p) := -\frac{\partial \widehat f(p)}{\partial p}.  
\end{equation}\,
\end{definition}
When $f$ is the gradient of the potential $\Phi$, then $O_p$ is a critical orbit and thus $(\Omega, v)=(0,0)$ and  the auxiliary vector field $h$ vanishes everywhere. i.e., $h \equiv 0$. Thus,  \eqref{eq:Hess2} coincides with~\eqref{eq:Hess1}. 
We refer to \cite{field1980equivariant} (Section 3, Proposition~J.) for the definition of Hessian under a more general context. 
Similarly, we have the following fact for generalized Hessian matrices (compared to Lemma~\ref{lem:Hessianinv}):

\begin{lemma}\label{lem:secondfactHess}
Let $O_p$ be an $f$-invariant orbit of system~\eqref{eq:Model2} with $p$ a full-rank configuration. Then, for any $p'\in O_p$, the two Hessian matrices $\Hess(p')$ and $\Hess(p)$ are related by a similarity transformation. There are at least $k(k+1)/2$ zero eigenvalues of $\Hess(p)$. The null space of $\Hess(p)$ contains the tangent space of $O_p$ at $p$.  
\end{lemma}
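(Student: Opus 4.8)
Throughout I would work from the explicit formula $\Hess(p) = -Df(p) + \widehat{\Omega}_p$, where $Df$ denotes the Jacobian $\partial f/\partial p$ and $\widehat{\Omega}_p := \operatorname{diag}(\Omega_p,\dots,\Omega_p)$ is the (constant) Jacobian of the affine field $h$, with $(\Omega_p,v_p)\in\se(k)$ the generator satisfying $f(p)=(\Omega_p,v_p)\cdot p$ supplied by Proposition~\ref{pro:flowoninvorb}. The plan is to read all three assertions off this formula using only equivariance and the transformation law in Proposition~\ref{pro:flowoninvorb}. The identification $\se(k)\cong T_pO_p$, $(\Omega',v')\mapsto \xi_{(\Omega',v')}:=(\Omega'x_1+v',\dots,\Omega'x_n+v')$, is a linear isomorphism because $p$ is full rank (Lemma~\ref{lem:trivialstabilizer}), so $\dim T_pO_p=\dim\se(k)=k(k+1)/2$.

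For the similarity statement I would differentiate the equivariance identity $f(\alpha\cdot p')=d\alpha\,(f(p'))$ in $p'$. Writing $\Theta:=d\alpha=\operatorname{diag}(\theta,\dots,\theta)$, a constant orthogonal matrix since $\alpha$ is affine, this gives $Df(\alpha\cdot p')\,\Theta=\Theta\,Df(p')$, hence $Df(p')=\Theta\,Df(p)\,\Theta^{-1}$ for $p'=\alpha\cdot p$. Item~2 of Proposition~\ref{pro:flowoninvorb} gives $\Omega_{p'}=\theta\,\Omega_p\,\theta^\top$, so the block matrices satisfy $\widehat{\Omega}_{p'}=\Theta\,\widehat{\Omega}_p\,\Theta^{-1}$, and subtracting yields $\Hess(p')=\Theta\,\Hess(p)\,\Theta^{-1}$. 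As $\Theta$ is orthogonal this is a genuine (indeed orthogonal) similarity, so $\Hess(p')$ and $\Hess(p)$ share their spectrum.

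For the eigenvalue/null-space part the naive move is to mimic the proof of Lemma~\ref{lem:Hessianinv} by differentiating the vanishing of the field along the orbit; the catch, which I flag as the crux, is that $\widehat f=f-h$ vanishes only at $p$ and not on all of $O_p$, since $h$ uses the fixed generator $(\Omega_p,v_p)$ while $f$ carries the conjugated generators $(\Omega_{p'},v_{p'})$. I would therefore compute $\Hess(p)\,\xi_{(\Omega',v')}$ directly, differentiating along $\gamma(s)=\exp(s(\Omega',v'))\cdot p$: equivariance gives $f(\gamma(s))=\operatorname{diag}(e^{s\Omega'},\dots)\,f(p)$, so $Df(p)\,\xi_{(\Omega',v')}=\widehat{\Omega}'\,f(p)$ with $i$-th block $\Omega'(\Omega_p x_i+v_p)$. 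Subtracting $\widehat{\Omega}_p\,\xi_{(\Omega',v')}$ (block $\Omega_p(\Omega'x_i+v')$) leaves the $i$-th block $[\Omega_p,\Omega']\,x_i+(\Omega_p v'-\Omega' v_p)$, i.e. $\Hess(p)\,\xi_{(\Omega',v')}=\xi_{[(\Omega_p,v_p),(\Omega',v')]}$. Thus $T_pO_p$ is $\Hess(p)$-invariant and the restriction is exactly the adjoint action $\operatorname{ad}_{(\Omega_p,v_p)}$ on $\se(k)$.

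This computation is simultaneously the proof and the obstacle. The flow direction $f(p)=\xi_{(\Omega_p,v_p)}$ always lies in $\ker\Hess(p)$ since $\operatorname{ad}_{(\Omega_p,v_p)}(\Omega_p,v_p)=0$, but the \emph{entire} tangent space lies in $\ker\Hess(p)$—giving literally $k(k+1)/2$ zero eigenvalues and $T_pO_p\subseteq\ker\Hess(p)$—precisely when $\operatorname{ad}_{(\Omega_p,v_p)}\equiv 0$, i.e. when $(\Omega_p,v_p)$ is central in $\se(k)$. Since $\se(k)$ has trivial center for $k\ge 2$, this holds exactly for equilibrium orbits $(\Omega_p,v_p)=0$; in particular for a critical orbit of a gradient $\Phi$ one has $h\equiv 0$, $\Hess(p)=H_p$, and Lemma~\ref{lem:Hessianinv} is recovered. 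For a genuinely drifting relative equilibrium $\operatorname{ad}_{(\Omega_p,v_p)}$ contributes purely imaginary eigenvalues (e.g. $0,\pm i\omega$ for a planar rotation), so the sharp reading of the last two assertions is that $\Hess(p)$ has at least $k(k+1)/2$ eigenvalues with \emph{zero real part} and that $T_pO_p$ sits inside the associated center subspace, which is precisely the transverse-linearization picture of~\cite{field1980equivariant}. I would therefore prove the statement as literally written under the standing assumption that the invariant orbit is an equilibrium orbit, and in the zero-real-part/center-subspace form otherwise.
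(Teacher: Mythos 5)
The paper never actually proves this lemma---it is stated by analogy with Lemma~\ref{lem:Hessianinv} together with a pointer to \cite{field1980equivariant}---so there is no in-paper argument to compare against step by step. Your proof of the similarity claim (differentiate the equivariance identity to get $Df(p')=\Theta\,Df(p)\,\Theta^{-1}$, combine with the transformation law $\Omega_{p'}=\theta\,\Omega_p\,\theta^\top$ from Proposition~\ref{pro:flowoninvorb}) is correct and is the standard route. Your computation for the second half is also correct, and it is the genuinely valuable part of the proposal: the identity $\Hess(p)\,\xi_{(\Omega',v')}=\xi_{[(\Omega_p,v_p),(\Omega',v')]}$ shows that $T_pO_p$ is $\Hess(p)$-invariant with restriction equal to $\operatorname{ad}_{(\Omega_p,v_p)}$, whose kernel is the centralizer of $(\Omega_p,v_p)$ in $\se(k)$, not all of $\se(k)$. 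Since $\se(k)$ has trivial center for $k\ge 2$, the lemma as literally written (null space containing $T_pO_p$; at least $k(k+1)/2$ zero eigenvalues) holds exactly when $(\Omega_p,v_p)=0$, i.e.\ for orbits consisting of equilibria; for a drifting orbit with $\Omega_p\neq 0$ the tangential spectrum contains nonzero purely imaginary eigenvalues (e.g.\ $0,\pm i\omega$ for $k=2$), and even the algebraic multiplicity of $0$ drops below $k(k+1)/2$.

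So you have not merely proved the statement---you have identified that it needs amending, and your amended form (at least $k(k+1)/2$ eigenvalues with \emph{zero real part}, with $T_pO_p$ contained in the center subspace) is the correct one. It is also the form the paper actually relies on downstream: Definition~\ref{def:seconddefforexp} and Remark~\ref{rmk:definitionofNs} should be read as normal hyperbolicity of the orbit (transverse spectrum in the open right half-plane), which is precisely the setting of \cite{field1980equivariant} and of Theorem~4.1 in \cite{hirsch2006invariant} that the paper invokes for persistence of the invariant orbit and for the stable foliation. In the one place where the literal null-space statement is used concretely---the orthogonal decomposition $T_qO_q\oplus N^s(q)=\R^{kn}$---the orbit in question is the unperturbed critical orbit $O_q$, where $h\equiv 0$ and your computation collapses to Lemma~\ref{lem:Hessianinv}, so no downstream conclusion is endangered. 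The only caveat I would add is to state explicitly, rather than leave implicit, that the full tangent space is $\Hess(p)$-invariant (so that ``transverse spectrum'' is well defined on the quotient $\R^{kn}/T_pO_p$); your displayed identity already gives this.
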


Following Lemma~\ref{lem:secondfactHess}, we can generalize Definition~\ref{def:firstdefforexp} to an $f$-invariant orbit: 

\begin{definition}\label{def:seconddefforexp}
Let $O_p$ be an $f$-invariant orbit of system~\eqref{eq:Model2}, with $p$ a full rank configuration. The orbit $O_p$ is {\bf nondegenerate} if the Hessian $\Hess(p)$ has exactly $k(k+1)/2$ zero eigenvalues. If all the other eigenvalues of $\Hess(p)$ have positive real parts, then $O_p$ is {\bf exponentially stable}.   
\end{definition}

Recall that if an $f$-invariant orbit $O_p$ is exponentially stable, then there is an open neighborhood $U$ of $O_p$ such that any solution of~\eqref{eq:Model2}, with  the initial condition $p(0)\in U$,  converges to $O_p$ exponentially fast. Moreover, the open neighborhood $U$ can be chosen such that it is  $\SE(k)$-invariant. 
We can in fact characterize the behavior of the solution near an invariant orbit in stronger terms.  
To do so, we  introduce the following definition:

\begin{definition}
Let $O_p$ be an $f$-invariant orbit of system~\eqref{eq:Model2}. Let $p'$ be a point in $O_p$. The {\bf stable manifold} of $p'$ under $f$ is the differentiable manifold given by
$$
W^s(p'):= \left \{  p'' \in P  \mid  \lim_{t\to \infty} \|\phi_t(p'') - \phi_t(p)'\| = 0   \right\}.
$$
For an open neighborhood $U$ of $p'$, the {\bf local stable manifold} is 
$$
W^s_U(p') := W^s(p') \cap U. 
$$\,
\end{definition}
We emphasize  that this definition takes into account the fact that $p'$ is \emph{not} necessarily an equilibrium point, but belongs to an invariant orbit which is attractive. If $p'$ is moreover an equilibrium, then $\phi_t(p')=p'$, and the above definition then reduces to the usual definition of a stable manifold. For example, the two-dimensional dynamical system $\dot x = x; \dot y = -y$ has the $x$-axis as an attractive invariant subspace of $\R^2$. For a point $p' = (1,0)$ on the $x$-axis, its stable manifold is given by the vertical straight line $\{(1,y) \mid y \in \R\}$. Indeed,  we have $\phi_t(p')=(e^{t},0)$ and for  $p''=(1,y)$, we have  $\phi_t(p'')=(e^t,ye^{-t})$, from which we conclude that  $\lim_{t\to \infty} \|\phi_t(p'') - \phi_t(p')\| = 0$ as required.

The following lemma then presents some well known facts about stable manifolds of points in an exponentially stable $f$-invariant orbit (not necessarily comprised of equilibrium points):   


\begin{lemma}\label{lem:fibersmoothsubmfld}
Let $O_p$ be an exponentially stable $f$-invariant orbit. Then, there is an $\SE(k)$-invariant open neighborhood $U$ of $O_p$ such that the following three properties hold:
\begin{enumerate}
\item[1.] The local stable manifold $W^s_U(p)$ intersects $O_p$ transversally at $p$.  
\item[2.] For any $\alpha\in \SE(k)$, we have 
\begin{equation*}\label{eq:fiberequivariant}
W^s_U(\alpha\cdot p) = \alpha\cdot W^s_U(p).
\end{equation*}
\item[3.] For any $p'\in O_p$ with $p'\neq p$, we have 
\begin{equation*}\label{eq:fiberdisjoint}
 W^s_U(p') \cap W^s_U(p) = \varnothing. 
\end{equation*}
\end{enumerate}\,
\end{lemma}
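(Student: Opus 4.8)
The plan is to strip off the drift of the orbit by passing to a co-moving frame, reducing all three claims to statements about an ordinary (if non-hyperbolic) equilibrium of a reduced vector field. Since $O_p$ is $f$-invariant and $p$ has full rank, Proposition~\ref{pro:flowoninvorb} gives a unique $(\Omega,v)\in\se(k)$ with $f(p)=(\Omega,v)\cdot p$, and the trajectory through $p$ is the rigid motion $\phi_t(p)=g_t\cdot p$ with $g_t:=\exp((\Omega,v)t)\in\SE(k)$. Let $h$ and $\widehat f=f-h$ be as in~\eqref{eq:defauxilh}; then $h$ is precisely the infinitesimal generator of the one-parameter subgroup $g_t$, so $p$ is an equilibrium of $\widehat f$. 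Because $f$ is $\SE(k)$-equivariant, $g_t$ commutes with the flow of $f$, and a standard computation for equivariant fields shows that the flow $\widehat\phi_t$ of $\widehat f$ satisfies $\phi_t(p')=g_t\cdot\widehat\phi_t(p')$, equivalently $\widehat\phi_t(p')=g_{-t}\cdot\phi_t(p')$. Since each $g_t$ acts on $P=\R^{nk}$ by the isometry $x_i\mapsto\theta_t x_i+b_t$, this identity yields the crucial relation $\|\phi_t(p'')-\phi_t(p')\|=\|\widehat\phi_t(p'')-\widehat\phi_t(p')\|$; taking $p'=p$ (where $\widehat\phi_t(p)=p$) shows that $W^s(p)$ coincides with the stable set of the equilibrium $p$ for $\widehat\phi_t$. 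Moreover $O_p$ is $\widehat\phi_t$-invariant, because $\widehat\phi_t(p')=g_{-t}\cdot\phi_t(p')$ and both $\phi_t$ and $g_{-t}$ preserve $O_p$.

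For Item~1 I would linearize $\widehat f$ at $p$, where by definition $\partial\widehat f(p)/\partial p=-\Hess(p)$. By exponential stability (Definition~\ref{def:seconddefforexp}) the matrix $\Hess(p)$ has exactly $k(k+1)/2$ zero eigenvalues; since Lemma~\ref{lem:secondfactHess} already places $T_pO_p$, of that same dimension, inside its kernel, the geometric and algebraic multiplicities of the zero eigenvalue agree, so it is semisimple and $\ker\Hess(p)=T_pO_p$. Hence $\R^{nk}=E^c\oplus E^s$ with center subspace $E^c=T_pO_p$ and stable subspace $E^s$ (the remaining eigenvalues of $-\Hess(p)$ having negative real part, and no other center or unstable directions present). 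Using $O_p$ as a local center manifold, the strong stable manifold theorem produces a smooth $W^{ss}_{\mathrm{loc}}(p)$ tangent to $E^s$ consisting exactly of the points converging to $p$ under $\widehat\phi_t$; choosing $U$ accordingly gives $W^s_U(p)=W^{ss}_{\mathrm{loc}}(p)$, which is transverse to $T_pO_p=E^c$ at $p$, proving Item~1.

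Item~2 follows directly from equivariance, without the reduction: since $\alpha\in\SE(k)$ acts by an isometry and $\phi_t(\alpha\cdot p'')=\alpha\cdot\phi_t(p'')$, we have $\|\phi_t(\alpha\cdot p'')-\phi_t(\alpha\cdot p')\|=\|\phi_t(p'')-\phi_t(p')\|$, so $p''\in W^s(p')$ iff $\alpha\cdot p''\in W^s(\alpha\cdot p')$; intersecting with an $\SE(k)$-invariant neighborhood $U$ (which exists, as noted before the lemma) yields $\alpha\cdot W^s_U(p')=W^s_U(\alpha\cdot p')$. For Item~3, let $p''\in W^s_U(p')\cap W^s_U(p)$ with $p'\neq p$ in $O_p$. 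Then $\|\phi_t(p'')-\phi_t(p')\|\to0$ and $\|\phi_t(p'')-\phi_t(p)\|\to0$, so the triangle inequality gives $\|\phi_t(p')-\phi_t(p)\|\to0$, i.e.\ $p'\in W^s(p)$; as $p'\in O_p\subseteq U$, this means $p'\in W^s_U(p)\cap O_p$. After shrinking $U$ so that the transverse intersection of the complementary-dimensional submanifolds $W^s_U(p)$ and $O_p$ reduces to the single point $p$ (possible by Item~1), we conclude $p'=p$, a contradiction.

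The main obstacle is Item~1: because $p$ is only a partially hyperbolic equilibrium of $\widehat f$, its center subspace being exactly $T_pO_p$, the classical hyperbolic stable manifold theorem does not apply directly. The delicate point is to show that the set of initial conditions whose $\widehat\phi_t$-trajectories genuinely converge to $p$ is precisely the strong stable manifold tangent to $E^s$, and not some larger center-stable set; this rests on verifying that the reduced dynamics on the center manifold $O_p$ — the rigid motion generated by $(\Omega,v)$, which acts by isometries — has no nontrivial contraction toward $p$. An alternative that sidesteps this is to invoke the theory of normally hyperbolic invariant manifolds: the exponentially attracting orbit $O_p$ admits a stable foliation whose leaves are exactly the fibers $W^s_U(p')$, so that transversality and disjointness are built into the foliation and equivariance of the leaves follows as in Item~2.
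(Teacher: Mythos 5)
Your proposal is essentially correct, but it is worth noting that the paper does not actually prove this lemma: it is stated as a collection of ``well known facts,'' with Remark~\ref{rmk:definitionofNs} supplying only the transversality part of Item~1 (the tangent space of $W^s(p)$ at $p$ is the range space $N^s(p)$ of $\Hess(p)$, citing~\cite{field1980equivariant}), and with~\cite{hirsch2006invariant} invoked elsewhere for the invariant-manifold machinery. Your co-moving-frame reduction $\widehat\phi_t = g_{-t}\circ\phi_t$ is a clean, self-contained way to make those citations concrete: it correctly converts the stable set of the drifting point $p$ into the stable set of a genuine (partially hyperbolic) equilibrium of $\widehat f$, whose linearization is $-\Hess(p)$ by the paper's own Definition of the Hessian, and Items~2 and~3 then follow from equivariance and the isometry of the group action exactly as you say. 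What the references buy in exchange is precisely the point you flag as delicate: the stable foliation of a normally hyperbolic attracting invariant manifold comes with the fiber characterization ($p''$ converges to the orbit of $p'$ iff $p''$ lies on the fiber over $p'$) already proved, so one never has to rule out a larger center-stable set by hand.

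The one loose statement in your argument is the claim that the reduced dynamics on the center manifold $O_p$ ``acts by isometries'': the restriction of $\widehat\phi_t$ to $O_p$ is the conjugation flow $\alpha\mapsto g_{-t}\,\alpha\, g_t$ (read through the orbit map $\alpha\mapsto\alpha\cdot p$), which is not an isometry of $O_p$ in the ambient metric. What you actually need --- and what is true --- is that this flow has no orbit converging to the fixed point $e$ other than $e$ itself: the rotation part of $g_{-t}\,\alpha\, g_t$ is conjugate to that of $\alpha$ and so has constant spectrum, hence cannot tend to $I$ unless it equals $I$; and when the rotation part is trivial the translation part has constant norm. Since $\stab(p)$ is trivial (Lemma~\ref{lem:trivialstabilizer}), this shows no point of $O_p\setminus\{p\}$ converges to $p$ under $\widehat\phi_t$, which both closes the gap in Item~1 and, incidentally, proves Item~3 directly --- your triangle-inequality step gives $\|\widehat\phi_t(p')-p\|\to 0$ for $p'\in O_p$, which forces $p'=p$ without any further shrinking of $U$. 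With that substitution your proof is complete.
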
   

\begin{remark}\label{rmk:definitionofNs}
When $O_p$ is exponentially stable (and hence nondegenerate), the null space of the Hessian matrix $\Hess(p)$ is $T_pO_p$. Denote by $N^s(p)$ the range space of $\Hess(p)$. Then, $N^s(p)$ is invariant under $\Hess(p)$. The eigenvalues of $\Hess(p)$, when restricted to $N^s(p)$, have positive real parts. We also have the following relation:
$$
T_pO_p \oplus N^s(p) = \R^{kn}.
$$
Furthermore, it is known~\cite{field1980equivariant} that the tangent space of the stable manifold $W^s(p)$ at $p$ is given by $N^s(p)$. It then follows that $W^s_U(p)$ intersects $O_p$ transversally at $p$. 
\end{remark}

In the remainder of the paper, we  write $\phi_t(p; f)$, $\Hess(p; f)$, and $W^s(p; f)$ to indicate  that the trajectory, the Hessian matrix, and the stable manifold depend on the vector field of system~\eqref{eq:Model2}.

\subsection{Local perturbation lemma}\label{sec:perturba}
We  investigate in this subsection the behavior of an exponentially stable $f$-invariant orbit $O_p$ under a small perturbation of the vector field $f$.  Recall that exponentially stable zeros of a vector field are ``robust'' under small perturbations of the vector field. That is, we consider  $\dot p = f(p)$ defined over an Euclidean space $P$ with  $p_0$  an (isolated) exponentially stable equilibrium point. Then,  for any small perturbation of $f$, there exists a unique equilibrium point $p'_0$ within an open neighborhood of $p_0$. Moreover, the equilibrium point $p'_0$ is exponentially stable. Our objective in this subsection is to obtain a similar result for an $f$-invariant orbit under an equivariant dynamics.


We  consider the class of generalized formation systems described by~\eqref{eq:Model2}. We find it useful to introduce a directed graph (or simply digraph) to distinguish between interactions $f_{ij}$ and $f_{ji}$, which are not necessarily the same in~\eqref{eq:Model2}.  To this end, we denote the digraph by ${G_d} = (V,{E_d})$, which is  obtained from $G = (V,E)$  by replacing each undirected edge $(i,j)\in E$ with two directed edges $i\to j$ and $j\to i$. 
Since the interaction laws $f_{ij}$'s uniquely determine the vector field $f$, with a slight abuse of notations, we write 
$
{f} = (f_{ij})
$, 
where the index~$ij$ denotes an edge $i\to j$ of $G_d$.   
We further denote by ${\cal F}$ the set of any such vector fields: 
$$
{\cal F}:= \left\lbrace(f_{ij}) \mid f_{ij} \in  {\rm C}^1(\R_{\ge 0},\R)\right\rbrace,
$$ 
where ${\rm C}^1(\R_{\ge 0}, \R)$ is the set of continuously differentiable functions from $[0,\infty)$ to $\R$. For two vector fields $g = (g_{ij})$ and $h = (h_{ij})$ in ${\cal F}$, we simply let $g+ h = (g_{ij} + h_{ij}).$ We further note that any vector field $f\in {\cal F}$ gives rise to an $\SE(k)$-equivariant formation system. 

We now describe the set of allowable perturbations on the nominal dynamics~$f$.  To proceed, we first introduce the $(1,\infty)$-{\bf Sobolev norm} for a function $\psi\in {\rm C}^1(\R_{\ge 0}, \R)$: 
$$
\| \psi \|_{1,\infty} := \sup \left \{ \|\psi\|_{\infty}, \  \|\psi'\|_{\infty} \right \},
$$ where $\psi'$ is the first-order derivative of $\psi$.
We next let 
${\cal W}$ be the proper subset of  ${\rm C}^1(\R_{\ge 0},\R)$ comprised of all bounded functions with respect to the $(1,\infty)$-Sobolev norm:
$$
{\cal W}:= \left\{ \psi\in {\rm C}^1(\R_{\ge 0},\R)  \mid  \| \psi \|_{1,\,\infty} < \infty   \right\}. 
$$  
We then introduce a class of subsets of ${\cal F}$ as follows: For a subgraph $G'=(V',E')$ of $G$, and the corresponding digraph $G'_d=(V',E'_d)$, we introduce the subset
\begin{equation}\label{eq:defineFB}
{\cal H}_{G'}:= \{(h_{ij}) \mid h_{ij} \in {\cal W}, \mbox{ and } h_{ij}=0 \mbox{ if } i\to j \notin E'_d\}.
\end{equation}
In particular, if we let $G'$ be the complete subgraph $G^*$, then $\cal{H}_{G^*}$  is the set of allowable perturbations of the nominal dynamics. 
For the remainder of the section, a perturbation of the gradient vector field~$f$ is an element $h  \in {\cal H}_{G^*}$.  
We further define the  norm of $h\in {\cal H}_{G^*}$ by
$$
\| {h} \| := \max \left\{  \|h_{ij}\|_{1,\infty}      \mid  i\to j\in E^*_d   \right\}.   
$$ 
Equipped with this norm, ${\cal H}_{G^*}$ is a Banach space.


We next show that exponentially stable critical orbits of system~\eqref{eq:Model1} are robust to perturbations. To this end, recall that  $q$ is a target configuration of the formation, and the framework $(G, q)$ has a {\it full rank} clique $(G^*, q^*)$. We assume without loss of generality that $G^*$ (resp. $q^*$) is comprised of the first $(k+1)$ vertices (resp. agents) of $G$ (resp. $q$). Furthermore, we have that $O_q$ is an exponentially stable critical orbit of the potential~$\Phi$ defined in~\eqref{eq:Potential}.  
From now, we will use $f = (f_{ij})$ only when referring to the nominal gradient vector field, and $(f + h)$ a perturbed vector field, with $h\in \cal{H}_{G^*}$.  We now have the following result:

\begin{lemma}\label{lem:perturbation1}
Let $U$ be an $\SE(k)$-invariant open neighborhood of $O_q$ in $P$. Then, there is a neighborhood $\cal{U}$ of $0$ in ${\cal H}_{G^*}$ such that 
for any $h \in \cal{U}$, there is a unique $(f+h)$-invariant, exponentially stable orbit $O_{p}$ in $U$.
\end{lemma}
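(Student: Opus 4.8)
The plan is to realize $O_p$ as a fixed point of a contraction (or, equivalently, as the zero set of an auxiliary map to which the implicit function theorem applies), exploiting the fact that $O_q$ is an exponentially stable $f$-invariant orbit with trivial stabilizer (since $q$ is full-rank, Lemma~\ref{lem:trivialstabilizer} gives $O_q\approx\SE(k)$). The central structural obstacle is that $O_q$ is a \emph{manifold} of equilibria, not an isolated equilibrium, so one cannot directly invoke the standard persistence-of-hyperbolic-equilibrium theorem; instead one must persist the whole invariant orbit. The key enabling fact is the transversal splitting recorded in Remark~\ref{rmk:definitionofNs}: at $q$ we have $\R^{kn}=T_qO_q\oplus N^s(q)$, where $\Hess(q;f)$ restricted to $N^s(q)$ is hyperbolic (all eigenvalues with positive real part). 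This is precisely the normal-hyperbolicity data needed to apply the persistence theorem for normally hyperbolic invariant manifolds.

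First I would set up the problem equivariantly. Since every $f+h$ with $h\in\cal{H}_{G^*}$ again generates an $\SE(k)$-equivariant system (as noted in the excerpt, each element of $\cal F$ does), both the nominal flow and every perturbed flow commute with the $\SE(k)$-action, and $U$ may be taken $\SE(k)$-invariant. The strategy is therefore to reduce to a transversal slice: using Lemma~\ref{lem:fibersmoothsubmfld}, the local stable manifold $W^s_U(q;f)$ meets $O_q$ transversally at $q$, so a neighborhood of $q$ in $W^s_U(q;f)$ serves as a cross-section to the orbit. One then studies the return/projection behavior of the perturbed vector field $f+h$ restricted to (a tubular neighborhood modeled on) this slice, where $N^s(q)$ carries the hyperbolic directions.

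Next I would verify the normal hyperbolicity hypotheses robustly and apply persistence. Because $O_q$ is exponentially stable in the transversal directions and consists of an $\SE(k)$-orbit (hence is compact up to the group action, and the dynamics along the orbit are neutral), $O_q$ is a normally hyperbolic invariant manifold for $f$. The dependence $h\mapsto (f+h)$ is continuous from the Banach space $\cal{H}_{G^*}$ into an appropriate $C^1$ topology on vector fields over the precompact region $U$ (here the $(1,\infty)$-Sobolev control on each $h_{ij}$ is exactly what bounds both $h$ and its Jacobian uniformly, so $\Hess(\,\cdot\,;f+h)$ varies continuously with $h$). Normal hyperbolicity is an open condition, so there is a neighborhood $\cal U$ of $0$ in $\cal{H}_{G^*}$ on which the splitting and the sign of the real parts of the eigenvalues of the transversal linearization persist; the persistence theorem then yields, for each $h\in\cal U$, a nearby invariant manifold $O_p\subset U$ that is $(f+h)$-invariant and exponentially stable in the normal directions. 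Equivariance forces this persisted manifold to be a single $\SE(k)$-orbit $O_p$, and uniqueness within $U$ follows from the uniqueness clause of the persistence theorem together with the transversality and disjointness properties of the stable manifolds in Lemma~\ref{lem:fibersmoothsubmfld}.

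The hardest part will be establishing that the persisted invariant set is exactly one orbit and that it is genuinely $(f+h)$-invariant rather than merely approximately invariant. I expect to handle this by an equivariant contraction argument on the slice: define a map sending a candidate cross-section point to the point where the perturbed trajectory pierces the transversal $N^s$-fiber, show via Remark~\ref{rmk:definitionofNs} and the spectral gap that its linearization is a contraction on $N^s(q)$ for $\|h\|$ small, and invoke the contraction mapping principle to obtain a unique fixed configuration $p$; Proposition~\ref{pro:flowoninvorb} then certifies that the orbit $O_p$ through this $p$ is $(f+h)$-invariant, with its associated $(\Omega_p,v_p)\in\se(k)$ depending continuously on $h$ and reducing to $(0,0)$ at $h=0$. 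Exponential stability of $O_p$ follows because the transversal spectrum varies continuously and stays in the open right half-plane, so $O_p$ is nondegenerate and exponentially stable in the sense of Definition~\ref{def:seconddefforexp}.
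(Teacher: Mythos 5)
Your proposal is correct and follows essentially the route the paper takes: the paper omits a proof and instead cites Field's Theorem~A on equivariant dynamical systems and Theorem~4.1 of Hirsch--Pugh--Shub on persistence of normally hyperbolic invariant manifolds, which is precisely the slice-reduction-plus-contraction (or implicit-function) argument you sketch. The one point to watch is that $O_q\approx\SE(k)$ is \emph{non-compact}, so the bare normally-hyperbolic persistence theorem does not apply off the shelf; your equivariant reduction to a transversal slice at the single point $q$ (where the invertibility of $\Hess(q;f)$ on $N^s(q)$ drives the contraction) is exactly what circumvents this and is the substance of the equivariant version the paper invokes.
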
 
The lemma follows directly from Theorem~A in~\cite{field1980equivariant}, and we thus omit the proof. We also refer to Theorem~4.1 of~\cite{hirsch2006invariant} for a similar statement in the more general context of hyperbolic invariant manifolds.

Fix a perturbation $ h\in \cal{U}$, and let $O_{p}$ be the $(f+h)$-invariant orbit whose existence is guaranteed by Lemma~\ref{lem:perturbation1}. 
 For a configuration $p'\in O_p$, we denote by $W^s_U(p'; f+h)$ the local stable manifold of $p'$ for the vector field~$(f+h)$. Shrinking the open neighborhood $U$ if necessary, we can assume that  $W^s_U(p'; f+h)$  intersects $O_q$  transversally for any $p'\in O_p$. In particular, there is a unique configuration $q'$ in the  intersection of $O_q$ and $W^s_U(p'; f+h)$.  
 Conversely, given a configuration, $q' \in O_q$, for any perturbation $h\in {\cal U}$, there is a unique configuration $p' \in O_p$ so that $q'$ lies in the intersection of $O_q$ and  $W^s_U(p'; f + h)$.

Following the arguments above, we then define the map $\rho$ which, for a target configuration $q$ fixed, assigns to a small perturbation $h$ the point $p$ in the $(f+h)$-invariant orbit so that $W^s_U(p;f+h) \cap O_q = \{q\}$:
 
\begin{equation}\label{eq:definerho}
\rho: h \mapsto p.
\end{equation}
We illustrate the definition of $\rho$ in Fig.~\ref{defrho}. Note that with the construction of~$\rho$, the $(f+h)$-invariant orbit is simply $O_{\rho(h)}$.

\begin{figure}[h]
\begin{center}
\includegraphics[width=0.6\textwidth]{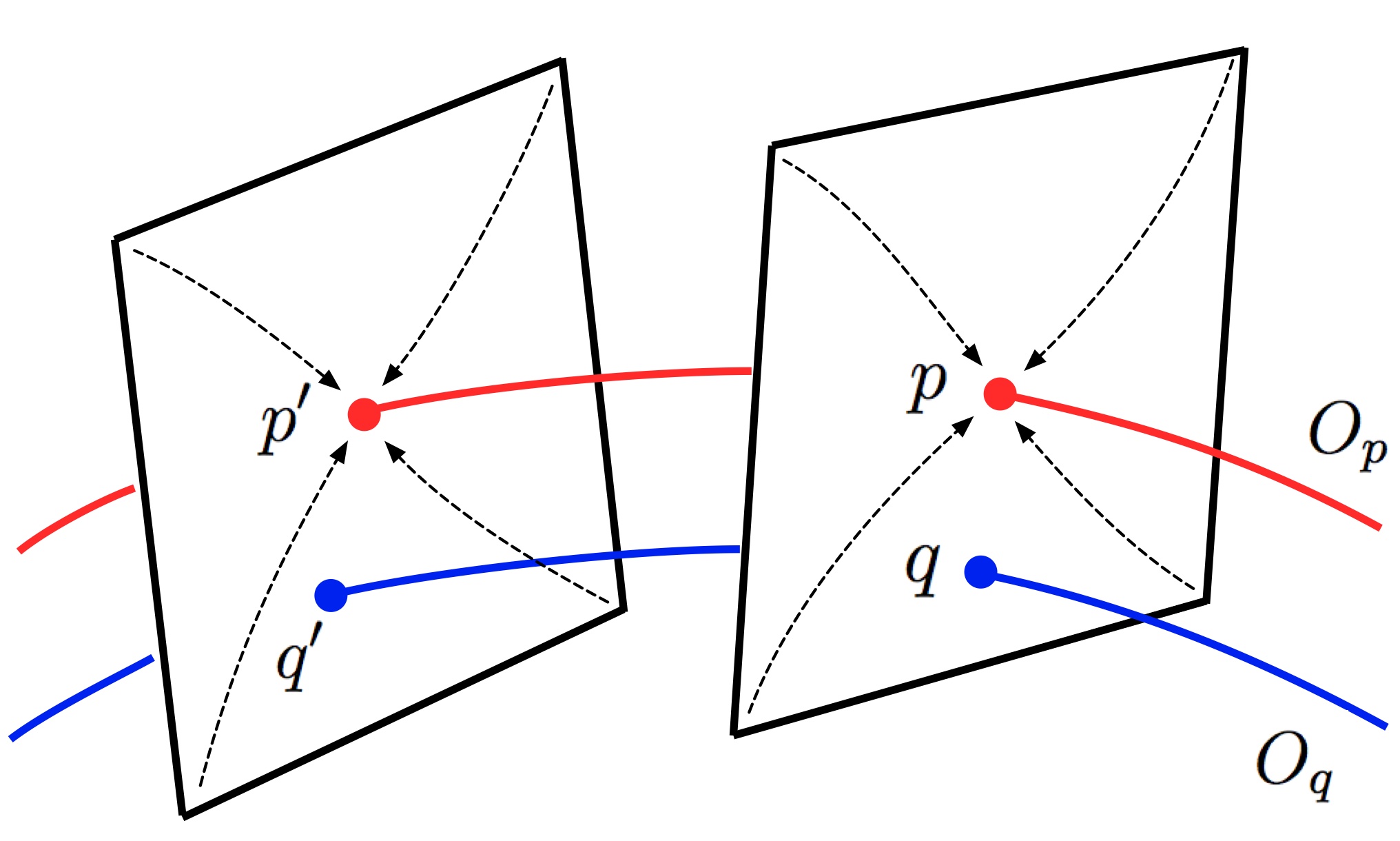}
\caption {\small  {\it  Definition of $\rho$.} The orbit $O_q$ (blue) is a critical orbit of $\Phi$ and thus comprised of target configurations. Perturbing the gradient vector field $f$ by  $h\in \cal{H}_{G^*}$,  we obtain an $(f+h)$-invariant orbit $O_p$ (red). If $\|h\|$ is sufficiently small, then $O_p$ is close to $O_q$, and moreover, the local stable manifold $W^s_U(p', f+h)$, for all $p'\in O_p$ intersects $O_q$ transversally. In particular, the intersection of $W^s_U(p', f+h)$ and $O_q$ is comprised only of a single configuration~$q'$. For a fixed target configuration $q$, there is a unique configuration $p$ such that $q\in W^s_U(p,f+h)$. The configuration $p$ depends on~$h$, and the map $\rho$ is defined by sending $h$ to $p$.}
\label{defrho}
\end{center}
\end{figure}

Recall that an equivariant vector field on an invariant orbit is completely determined by its value at a single point in the orbit, and hence an element of $\se(k)$ (see Proposition~\ref{pro:flowoninvorb}). We now fix a perturbation $h\in \cal{U}$. For convenience, we let $g:= f + h$. Since the perturbed dynamics has $O_{\rho(h)}$ as a $g$-invariant orbit by construction, there is a unique element $(\Omega_{\rho(h)}, v_{\rho(h)})\in \se(k)$ such that 
\begin{equation}\label{eq:relationpfp}
g(\rho(h)) = (\Omega_{\rho(h)}, v_{\rho(h)})\cdot \rho(h).
\end{equation} 
We further introduce the map $\omega: \cal{U} \longrightarrow \se(k)$, which assigns $h$ to the pair $(\Omega_{\rho(h)}, v_{\rho(h)})$:
\begin{equation}\label{eq:defineomega}
\omega: h \mapsto (\Omega_{\rho(h)}, v_{\rho(h)}).
\end{equation} 
With the map $\omega$,~\eqref{eq:relationpfp} is now reduced to
\begin{equation}\label{eq:relationpfphaha}
g(\rho(h)) = \omega(h)\cdot\rho(h).
\end{equation}
To summarize, to a small perturbation~$h$ of the nominal dynamics~$f$, we have assigned an $(f+h)$-invariant orbit $O_{\rho(h)}$. Moreover, the perturbed dynamics on this orbit can be written in a concise form as in~\eqref{eq:relationpfphaha}.
 
We now proceed to exhibit a relationship between the map $\omega$ and the map $\rho$, which will be of great use to show that the map $\omega$  is locally surjective---i.e., the image of $\omega$ is onto an open neighborhood of $0$ in $\se(k)$. To this end, we need to introduce Fr\'echet derivatives:

\begin{definition}[Fr\'echet derivative]
Let $X$ and $Y$ be Banach spaces, and $U$ be an open set of X. A map $\rho: U\to Y$ is said to be {\bf (Fr\'echet) differentiable} if for any $x\in U$, there is a bounded linear operator $\cal{L}_x: X \longrightarrow Y$ such that 
 $$
 \lim_{\|h\|\to 0}\frac{\|\rho(x + h) - \rho(x) - \cal{L}_x(h) \|}{\| h \|} = 0.
 $$
 We call $\cal{L}_x$  the {\bf (Fr\'echet) derivative} of $\rho$ at $x$. Further, we say that $\rho$ is {\bf continuously (Fr\'echet) differentiable} if the derivative map $$D\rho: x\mapsto \cal{L}_x$$ defined over $U$ is continuous, i.e., for any $x\in U$, 
$$ 
\lim_{\|h\|\to 0} \frac{\| \cal{L}_{x+h}(x') - \cal{L}_x(x')  \|}{\|x'\|} = 0, 
$$ 
for any $x' \in X $ with  $\|x'\| = 1$.
\end{definition}


With the preliminaries above, we have the following result: 
\begin{proposition}\label{pro:thequeenking}
Let $q$ be a target configuration of the formation system~\eqref{eq:Model1}, and $\Hess(q;f)$ be the Hessian matrix at $q$. Let $\rho$ and $\omega$ be the two maps defined in~\eqref{eq:definerho} and~\eqref{eq:defineomega}, respectively.  
Then,  $\rho$ and  $\omega$ are continuously (Fr\'echet) differentiable. Let $$d\rho_0:{\cal H}_{G^*} \longrightarrow \R^{kn} \hspace{5pt} \mbox{and} \hspace{5pt} d\omega_0: {\cal H}_{G^*}\longrightarrow \se(k)$$ be the derivatives of $\rho$ and $\omega$ at $0$, respectively. Then, for any $h \in {\cal H}_{G^*}$, the following relationship holds:
\begin{equation}\label{eq:thequeen}
h(q) = d\omega_0(h)\cdot q + \Hess(q; f) d\rho_0({h}).  
\end{equation}\,
\end{proposition}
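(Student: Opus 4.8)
The plan is to differentiate the defining identity~\eqref{eq:relationpfphaha}, namely $g(\rho(h)) = \omega(h)\cdot \rho(h)$ with $g = f+h$, at the point $h=0$ and read off~\eqref{eq:thequeen}. First I would establish the differentiability of $\rho$ (and consequently of $\omega$). The cleanest route is the implicit function theorem in Banach spaces. The configuration $\rho(h)$ is characterized geometrically as the unique point $p$ in the perturbed invariant orbit whose local stable manifold $W^s_U(p;f+h)$ meets $O_q$ at the fixed target $q$; equivalently, $\rho(h)$ is pinned down by requiring $q \in W^s_U(\rho(h); f+h)$. I would encode this transversal-intersection condition as the vanishing of a smooth map and verify that the relevant partial derivative is an isomorphism, using the nondegeneracy of the Hessian (Remark~\ref{rmk:definitionofNs}, which gives $T_qO_q \oplus N^s(q) = \R^{kn}$, so the stable manifold meets the orbit transversally). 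This yields that $\rho$ is continuously Fr\'echet differentiable with derivative $d\rho_0 : {\cal H}_{G^*}\to \R^{kn}$; differentiability of $\omega$ then follows since $\omega(h) = (\Omega_{\rho(h)}, v_{\rho(h)})$ is obtained from $g(\rho(h))$ by the bijective linear correspondence $f(p) \leftrightarrow (\Omega_p, v_p)$ valid on full-rank configurations (Proposition~\ref{pro:flowoninvorb}, item 1).

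Next I would carry out the differentiation. Write $\Psi(h) := g(\rho(h)) - \omega(h)\cdot \rho(h)$, which is identically zero by~\eqref{eq:relationpfphaha}, and differentiate at $h=0$. At the base point $h=0$ we have $\rho(0)=q$ (the nominal critical orbit is $O_q$), $\omega(0) = (0,0)$ (since $q$ is an equilibrium of the unperturbed gradient flow, $f(q)=0$), and $g = f+h$. Applying the chain rule to the left side gives two contributions: the explicit dependence of $g=f+h$ on $h$, which at $h=0$ contributes $h$ evaluated at $q$, i.e.\ the term $h(q)$; and the dependence through $\rho$, which contributes $\frac{\partial f(q)}{\partial p}\, d\rho_0(h)$. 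For the right side, differentiating $\omega(h)\cdot\rho(h) = \Omega_{\rho(h)}\,(\cdot) + v_{\rho(h)}$ at $h=0$: since $\omega(0)=(0,0)$, the product rule kills the term where the derivative falls on $\rho$ (it is multiplied by the vanishing $\omega(0)$), leaving only $d\omega_0(h)\cdot q$, where $d\omega_0(h)\cdot q$ denotes the infinitesimal generator $d\omega_0(h)$ acting affinely on $q$ as in~\eqref{eq:notation1}. Setting $d\Psi_0(h)=0$ gives
\begin{equation*}
h(q) + \frac{\partial f(q)}{\partial p}\, d\rho_0(h) = d\omega_0(h)\cdot q.
\end{equation*}
Finally I would identify $\frac{\partial f(q)}{\partial p}$ with $-\Hess(q;f)$. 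At the critical orbit the auxiliary field $h$ in~\eqref{eq:defauxilh} vanishes (since $(\Omega,v)=(0,0)$), so $\widehat f = f$ and by definition~\eqref{eq:Hess2} we have $\Hess(q;f) = -\frac{\partial f(q)}{\partial p}$. Substituting and rearranging yields exactly $h(q) = d\omega_0(h)\cdot q + \Hess(q;f)\, d\rho_0(h)$, which is~\eqref{eq:thequeen}.

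The main obstacle I anticipate is the first step: rigorously justifying that $\rho$ is Fr\'echet differentiable and that the chain rule applies, since $\rho$ is defined implicitly through an infinite-dimensional perturbation $h\in{\cal H}_{G^*}$ and a stable-manifold intersection rather than by an explicit formula. This requires that the invariant-manifold/stable-manifold data depend smoothly on $h$ in the Banach-space sense; the existence and uniqueness are already supplied by Lemma~\ref{lem:perturbation1} (via~\cite{field1980equivariant}), but promoting these to continuous differentiability and verifying the transversality hypothesis uniformly for small $h$ is the delicate part. Once differentiability is in hand, the actual computation is a routine product-rule argument that exploits the two crucial simplifications at the base point, $\rho(0)=q$ and $\omega(0)=(0,0)$, which are precisely what make the cross terms vanish and leave the clean three-term identity.
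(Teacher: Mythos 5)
Your proposal is correct and follows essentially the same route as the paper: the paper likewise differentiates the identity $(f+\epsilon h)(\rho(\epsilon h)) = \omega(\epsilon h)\cdot\rho(\epsilon h)$ to first order in $\epsilon$, uses $\rho(0)=q$, $\omega(0)=0$, $f(q)=0$ to kill the cross terms, and identifies $\partial f(q)/\partial p$ with $-\Hess(q;f)$ via the gradient structure. The only difference is that for the differentiability of $\rho$ and $\omega$ the paper simply cites Theorem~4.1 of~\cite{hirsch2006invariant} rather than running the implicit-function-theorem argument you sketch, so the step you flag as delicate is, in the paper, delegated to the invariant-manifold literature.
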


We refer to Appendix-A for a proof of Proposition~\ref{pro:thequeenking}. 
Note that since $O_q$ is a critical orbit of system~\eqref{eq:Model1}, the Hessian matrix $\Hess(q; f)$ in~\eqref{eq:thequeen} coincides with an earlier definition~\eqref{eq:Hess1}:
$$
\Hess(q; f) = \left. -\frac{\partial f(p)}{\partial p} \right |_{p = q}= \left. \frac{\partial^2\Phi(p)}{\partial p^2} \right |_{p = q} = H_q. 
$$
It is, in particular, symmetric and has $T_qO_q$ (resp. $N^s(q)$) as its null space (resp. range space). Hence, we have 
$$
d\omega_0(h)\cdot q \in T_qO_q \hspace{10pt} \mbox{ and } \hspace{10pt} \Hess(q; f) d\rho_0(h) \in N^s(q), 
$$ 
where the first relationship holds by definition of $T_qO_q$. Thus~\eqref{eq:thequeen} provides a decomposition of the vector $h(q)\in \R^{kn}$ into components of the two orthogonal  subspaces $T_qO_q$ and $N^s(q)$.  

\section{Analysis and Proof of Theorem~\ref{thm:Main}}

\subsection{On the local surjectivity of $\omega$}\label{sec:vertshift}
We show in this subsection that the map $\omega$, which assigns to a perturbation $h$ of the nominal vector field $f$ an element  of $\se(k)$ describing the vector field $(f+h)$ at the configuration $\rho(h)$ of its $(f+h)$-invariant orbit, is {\it locally surjective}.  First, note that by definition, we have $\omega(0) = 0$. Indeed, if $h = 0$, the perturbed dynamics and the nominal dynamics are the same, but we know that the invariant orbit of the nominal dynamics consists of equilibrium points. Said otherwise, we have that $\rho(0) = q$ and $O_q$ is a critical orbit of the potential~$\Phi$. Thus, we have that $f(q) = \omega(0)\cdot q = 0$, and hence $\omega(0) = 0$. We show below that the image of $\omega$ contains an open neighborhood of $0$ in $\se(k)$, and hence $\omega$ is locally surjective around the origin.

To proceed, we recall that $G^*$ is a clique in $G$ comprised of the first $(k+1)$ vertices,  and $G^*_d = \(V, E^*_d\)$ is the corresponding complete digraph, obtained by replacing each undirected edge of $G^*$ with two directed edges. We also recall that $\overline d_{ij}$, for $(i,j)\in E$, is the target edge-length between agent $x_i$ and agent $x_j$. We further recall that a perturbation vector field ${h} = (h_{ij}) \in {\cal H}_{G^*}$ is such that  $h_{ij}\in {\cal W}$ if $i\to j \notin E^*_d$, and $0$ otherwise. We now define a map \begin{equation}\label{eq:defineeta} \eta: {\cal H}_{G^*} \longrightarrow \so(k+1): h \mapsto A_h, \end{equation} where the diagonal entries of $A_{h}$ are zeros, and the off-diagonal entries are
\begin{equation}\label{eq:defineAh}
A_{h,ij} := \frac{1}{2}\(h_{ij}\(\overline d_{ij}\) - h_{ji}\(\overline d_{ij}\)\).  
\end{equation}    
The map  $\eta$ is clearly onto $\so(k+1)$; indeed, the image of the \emph{constant} perturbations functions $h \in {\cal H}_{G^*}$ under $\eta$ is onto $\so(k+1)$.  We next establish the following result:

\begin{proposition}\label{pro:linearspan}
Let $\{ h_1,\ldots,  h_m\}$ be a subset of ${\cal H}_{G^*}$. Then, 
 $$\Sp\{d\omega_0 (h_1),\ldots,d\omega_0 (h_m)\} = \se(k)$$  
 if and only if 
$$\Sp\{\eta( h_1),\ldots, \eta( h_m)\} = \so(k+1),$$ 
with the map $\eta$ defined in~\eqref{eq:defineeta}. 
\end{proposition}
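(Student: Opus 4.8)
The plan is to reduce the statement to a single linear-algebraic fact by constructing a linear isomorphism $\Psi:\so(k+1)\to\se(k)$ with $d\omega_0 = \Psi\circ\eta$. Once this is in hand the proposition is immediate: since $\eta$ is onto $\so(k+1)$ and $\Psi$ is bijective, $\Sp\{d\omega_0(h_1),\ldots,d\omega_0(h_m)\} = \Psi\left(\Sp\{\eta(h_1),\ldots,\eta(h_m)\}\right)$, so the left span fills $\se(k)$ exactly when the right span fills $\so(k+1)$. The structural reason such a $\Psi$ can exist is that $\dim\se(k) = \dim\so(k+1) = k(k+1)/2$.

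To obtain a usable description of $d\omega_0$, I would start from Proposition~\ref{pro:thequeenking}. As already observed after that proposition, because $O_q$ is a critical orbit the matrix $\Hess(q;f)$ is symmetric with null space $T_qO_q$ and range $N^s(q)$, so identity~\eqref{eq:thequeen} is precisely the orthogonal decomposition of $h(q)$ along $T_qO_q\oplus N^s(q)=\R^{kn}$. Consequently $d\omega_0(h)\cdot q$ is the orthogonal projection of $h(q)$ onto $T_qO_q$, characterized by $\inprod{d\omega_0(h)\cdot q}{(\Omega',v')\cdot q} = \inprod{h(q)}{(\Omega',v')\cdot q}$ for all $(\Omega',v')\in\se(k)$. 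Writing $c_{ij}:=h_{ij}(\overline d_{ij})$, so that $h_i(q)=\sum_{j\in\cal{N}^*_i}c_{ij}(x_j-x_i)$ for $i\in V^*$ and $h_i(q)=0$ otherwise, I would expand this pairing and antisymmetrize each resulting sum over the ordered pairs $(i,j)$ of clique vertices. Using that $\Omega'$ is skew and $q$ is a target configuration, the symmetric part of $(c_{ij})$ cancels, and the pairing collapses to a bilinear form $B(A,(\Omega',v'))$ depending on $h$ only through the skew part $A_{ij}=\tfrac12(c_{ij}-c_{ji})$, which is exactly $\eta(h)$ by~\eqref{eq:defineAh}. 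This shows $\ker\eta\subseteq\ker d\omega_0$; since $\eta$ is surjective, $d\omega_0$ descends to a well-defined linear map $\Psi:\so(k+1)\to\se(k)$ with $d\omega_0=\Psi\circ\eta$ and $\inprod{\Psi(A)\cdot q}{(\Omega',v')\cdot q}=B(A,(\Omega',v'))$.

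It then remains to show $\Psi$ is invertible, and since the two spaces have equal dimension this is equivalent to the non-degeneracy of $B$. This is the heart of the argument and the only place the full-rank hypothesis enters. The decisive step is to recognize the coefficient of $A_{ij}$ in $B$ as a congruence: introduce the augmented vectors $\widehat x_i:=(x_i,1)\in\R^{k+1}$ and the standard identification of $\se(k)$ with $\so(k+1)$ sending $(\Omega',v')$ to the skew matrix $S:=\begin{bmatrix} -\Omega' & -v' \\ (v')^{\top} & 0 \end{bmatrix}$. A short computation, again using skew-symmetry of $\Omega'$, shows that the coefficient of $A_{ij}$ equals $\widehat x_i^{\top}S\,\widehat x_j$, so that $B(A,(\Omega',v')) = \tr\left(A^{\top}X^{\top}S X\right)$, where $X$ is the $(k+1)\times(k+1)$ matrix whose $i$-th column is $\widehat x_i$. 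Because the clique $q^*$ has full rank, $X$ is nonsingular (this is precisely the matrix shown invertible in Lemma~\ref{lem:trivialstabilizer}), and hence $S\mapsto X^{\top}SX$ is a bijection of $\so(k+1)$ onto itself. Non-degeneracy of $B$ then follows from non-degeneracy of the Frobenius pairing on $\so(k+1)$: if $B(A,\cdot)\equiv 0$ then $A$ is Frobenius-orthogonal to all of $\so(k+1)$, forcing $A=0$, while if $B(\cdot,(\Omega',v'))\equiv0$ then $X^{\top}SX=0$, forcing $S=0$ and $(\Omega',v')=0$. Thus $\Psi$ is an isomorphism, and the claimed equivalence of spans follows. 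The only genuinely delicate point is the congruence identity $B(A,(\Omega',v'))=\tr(A^{\top}X^{\top}SX)$; after it, invertibility of $X$ does all the work.
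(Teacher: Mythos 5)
Your proposal is correct and follows essentially the same route as the paper's proof: both reduce via the decomposition~\eqref{eq:thequeen} to the pairing $\langle (\Omega,v)\cdot q,\, h(q)\rangle$ on $T_qO_q$, identify it as the trace form $\tr\left(T(\Omega,v)\, X\,\eta(h)\,X^\top\right)$ with $X$ the augmented clique matrix from Lemma~\ref{lem:trivialstabilizer}, and conclude by nonsingularity of $X$ together with non-degeneracy of the trace pairing on $\so(k+1)$. The only cosmetic difference is that you package the equal-kernel statement as an explicit isomorphism $\Psi$ with $d\omega_0=\Psi\circ\eta$, whereas the paper phrases it as the equivalence $d\omega_0(h)=0\Leftrightarrow\eta(h)=0$ plus a dimension count.
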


We refer to  Appendix-B for a proof of Proposition~\ref{pro:linearspan}. Since the map $\eta:{\cal H}_{G^*}\longrightarrow \so(k+1)$  is surjective, there exists a finite subset of ${\cal H}_{G^*}$: $${H} := \{h_1,\ldots,  h_{m} \},$$ with $m := k(k+1)/2$ 
such that the linear span of $\eta(h_i)$, for $i = 1,\ldots, m$, is $\so(k+1)$.
We denote by $\Sp({H})$ be the span of the elements in ${H}$: 
$$
\Sp({H}) := \left\{ \sum^m_{i = 1} c_i h_i \mid c_i \in \R \right \}.
$$
Then, $\Sp({H})$ is a finite dimensional subspace of ${\cal H}_{G^*}$. We further let 
\begin{equation}\label{eq:defineomegaF}
\omega_H: \Sp({H}) \cap \cal{U} \longrightarrow \se(k)
\end{equation} 
be the map defined by  restricting $\omega$ to the intersection of $\Sp({H})$ and $\cal{U}$, with $\cal{U}$ given in Lemma~\ref{lem:perturbation1}. We now apply Proposition~\ref{pro:linearspan} and arrive at the following result: 

\begin{corollary}\label{cor:submersion}
The map $\omega_H$ is a submersion at $0$, and thus its  image  contains an open neighborhood of $0$ in $\se(k)$.
\end{corollary}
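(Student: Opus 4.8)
The plan is to reduce the statement to a finite-dimensional computation and then invoke the inverse function theorem. The first observation is that the domain $\Sp(H)$ is finite dimensional of the right size: since $\eta(h_1),\ldots,\eta(h_m)$ span $\so(k+1)$ and $m = k(k+1)/2 = \dim\so(k+1)$, they form a basis of $\so(k+1)$; as $\eta$ is linear, the $h_i$ must themselves be linearly independent, so $\dim\Sp(H) = m$. Because $\dim\se(k) = k(k-1)/2 + k = k(k+1)/2 = m$ as well, the map $\omega_H$ goes between an open subset of an $m$-dimensional space and an $m$-dimensional target.

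Next I would identify the derivative of $\omega_H$ at the origin. Since $\omega$ is continuously Fr\'echet differentiable on $\cal{U}$ by Proposition~\ref{pro:thequeenking}, and $\Sp(H)$ is a finite-dimensional subspace of ${\cal H}_{G^*}$, the restriction $\omega_H$ is an ordinary $C^1$ map, and its differential at $0$ is the restriction $d\omega_0|_{\Sp(H)}:\Sp(H)\longrightarrow\se(k)$. Consequently the image of this differential equals $\Sp\{d\omega_0(h_1),\ldots,d\omega_0(h_m)\}$.

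The key step is to show this image is all of $\se(k)$. By construction $H$ was chosen so that $\Sp\{\eta(h_1),\ldots,\eta(h_m)\} = \so(k+1)$; Proposition~\ref{pro:linearspan} then yields $\Sp\{d\omega_0(h_1),\ldots,d\omega_0(h_m)\} = \se(k)$ at once. Hence the differential of $\omega_H$ at $0$ is surjective, which is precisely the assertion that $\omega_H$ is a submersion at $0$; since the dimensions of domain and codomain agree, it is in fact a linear isomorphism.

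Finally, because $\omega_H$ is $C^1$ and its differential at $0$ is an isomorphism of $m$-dimensional spaces, the inverse function theorem shows $\omega_H$ is a local diffeomorphism near $0$. Recalling that $\omega_H(0) = \omega(0) = 0$ (as noted before the corollary, since $\rho(0) = q$ lies in the critical orbit), the image of $\omega_H$ therefore contains an open neighborhood of $0$ in $\se(k)$. I do not expect a genuine obstacle here: all the substantive work sits in Proposition~\ref{pro:linearspan} and in the differentiability established in Proposition~\ref{pro:thequeenking}. The only point requiring care is verifying that restricting a Fr\'echet-differentiable Banach-space map to a finite-dimensional subspace yields a $C^1$ map to which the classical inverse function theorem applies, which follows from the continuity of $D\omega$.
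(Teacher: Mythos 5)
Your proposal is correct and follows essentially the same route as the paper: identify $d\omega_{H,0}$ with the restriction of $d\omega_0$ to $\Sp(H)$ and invoke Proposition~\ref{pro:linearspan} to get surjectivity of the differential. The extra dimension count and the inverse-function-theorem step (showing $\omega_H$ is a local diffeomorphism) are not in the paper's proof of the corollary itself, but they match what the paper uses immediately afterwards in the proof of Theorem~\ref{thm:Main}.
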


\begin{proof} Let $d\omega_{H,0}$ be the derivative of $\omega_H$ at~$0$;  it suffices to show that it is of full rank. By definition of $\omega_H$,  $d\omega_{H,0}(h) = d\omega_0(h)$ for any $h \in\Sp({H})$. From Proposition~\ref{pro:linearspan}, we have 
$$
\Sp\{d\omega_0 (h_1),\ldots,d\omega_0 (h_m)\} = \se(k),
$$  
which completes the proof.
\end{proof}

We conclude this subsection by introducing a natural subset ${H}$ of ${\cal H}_{G^*}$, termed as the vertical shifting basis. All elements of $H$ are simply constant functions, and somehow reflect  the canonical basis of $\so(k+1)$: 

\begin{definition}[Vertical shifting basis]\label{exmp:verticalshift}
Let $G^* = \left (V^*, E^*\right )$ be the clique of $G$, comprised of the first $(k+1)$ vertices of $G$. For each edge $e = (i, j) \in E^*$ (with $i<j$), we define  $[e] = ([e]_{lm}) \in {\cal H}_{G^*}$  such that 
$$
[e]_{lm}:= 
\left\{
\begin{array}{ll}
1 & \mbox{if } l = i \mbox{ and } m = j, \\
-1 & \mbox{if } l =  j \mbox{ and } m = i, \\
0 & \mbox{otherwise}.  
\end{array}
\right.  
$$
We call the set ${H} := \left \{[e] \mid e \in E^* \right \}$ a {\bf vertical shifting basis}.   
\end{definition}

We provide below an example of vertical shifting basis for illustration:

\begin{example}
Consider the formation system depicted in Fig.~1, with the clique $G^*$ formed by the three vertices $\{1,2,3\}$. Then the vertical shifting basis in this case has three elements $[(1,2)]$, $[(1,3)]$ and $[(2,3)]$. We list below the nonzero $[e]_{lm}$'s for each $[e]$: for $[(1,2)]$, we have $[(1,2)]_{12} = 1$ and $[(1,2)]_{21} = -1$; for $[(1,3)]$, we have $[(1,3)]_{13} = 1$ and $[(1,3)]_{31} = -1$; for $[(2,3)]$, we have $[(2,3)]_{23} = 1$ and $[(2,3)]_{32} = -1$.
\end{example}

Let $H$ be the vertical shifting basis. Since $H$ is a basis of $\Sp(H)$,  for each element $h\in \Sp(H)$, we write 
$
h = \sum_{e\in E^*} c_{e} [e]
$, with $c_e$'s the real coefficients.  Let $f$ be the nominal vector field, and $g: = f + h$. Then, each $g_{ij}$ is obtained by a vertical shift of $f_{ij}$; indeed, one has that 
$
g_{ij}(x) = f_{ij}(x) \pm c_{e}
$ for all $x \ge 0$, where the plus/minus sign depends on whether $i< j$ or $i > j$. We further note that adding such a perturbation $h$ to the nominal vector field $f$ can also be interpreted as creating edge-length mismatches among the agents in the sub-configuration $q^*$.  
We consider, for example, the nominal vector field $f = (f_{ij})$ given by $f_{ij}(x) = x - \overline d_{ij}$; then, for $1\le i\neq j\le (k+1)$, 
$$
g_{ij}(x):= 
\left\{
\begin{array}{ll}
x - \(\overline d_{ij} - c_{e}\)  & \mbox{ if } i < j\\
x - \(\overline d_{ij} + c_e \)  & \mbox{ otherwise},
\end{array}
\right.
$$ 
where $c_e$ is the coefficient corresponding to the undirected edge $(i,j)$ of $G^*$.   
Thus, $\(\overline d_{ij} - c_e\)$ (resp. $\(\overline d_{ij} + c_e\)$) is the desired edge-length agent~$x_i$ (resp.~$x_j$) needs to achieve under the perturbed dynamics. An illustration is provided in Fig.~\ref{shift}.

\begin{figure}[h]
\begin{center}
\includegraphics[width=0.6\textwidth]{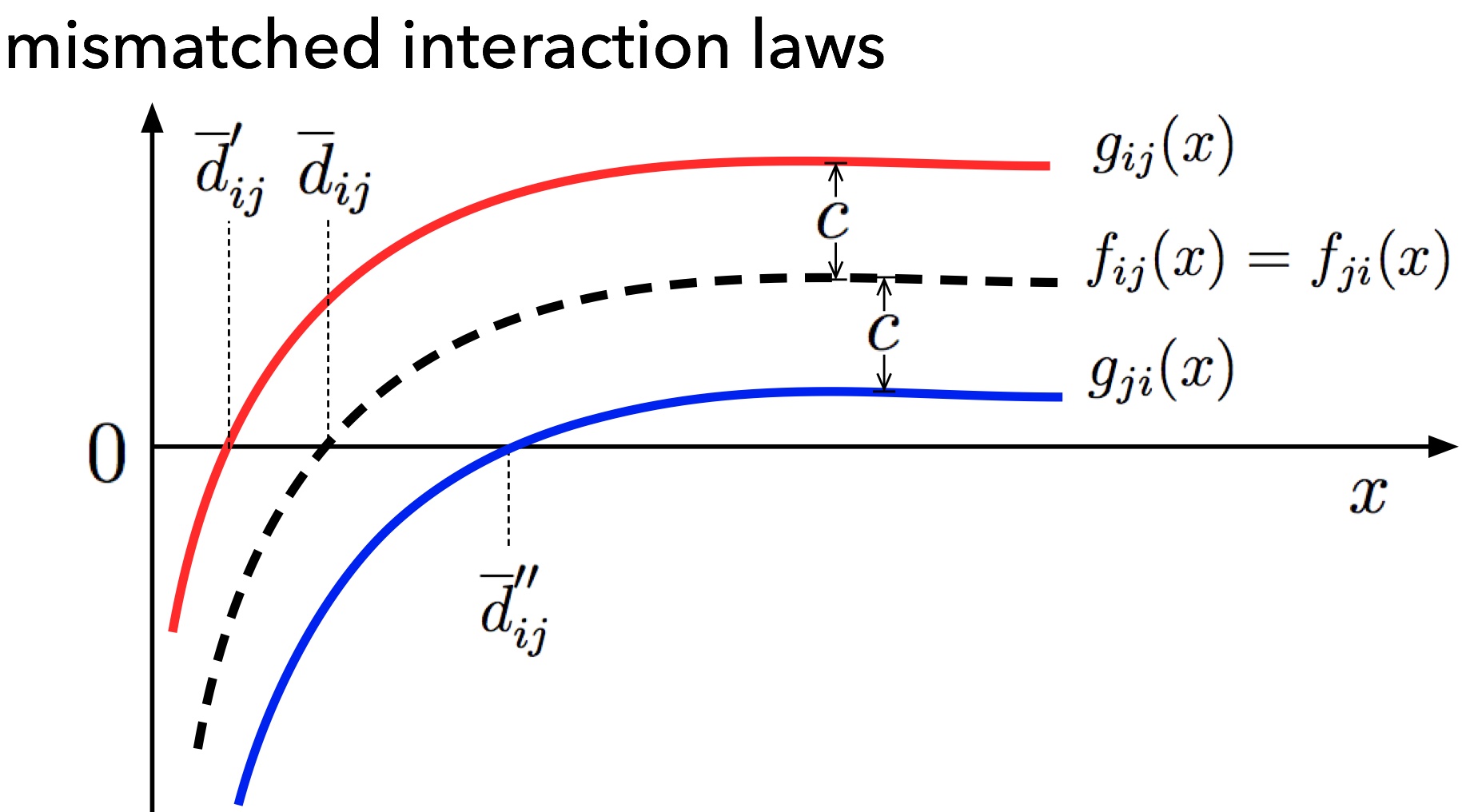}
\caption{\small Let $e = (i,j)$ be an edge of $G^*$, with $i<j$, and let $g:= f+ c[e]$ for $c > 0$. We plot $f_{ij}(x) = f_{ji}(x)$ in a black-dashed line, $g_{ij}(x)$ in red, and $g_{ji}(x)$ in blue. Note that the $f_{ij}$'s are often designed such that they are monotonically increasing, and have unique zeros at the target edge-lengths $\overline d_{ij}$'s. Then, vertical shifts of $f_{ij}$ by $c$ and $-c$, for $c$ small, gives rises to two distinct zeros, labelled as $\overline d'_{ij}$ and $\overline d''_{ij}$, respectively. Thus, $\overline d'_{ij}$ (resp. $\overline d''_{ij}$) can be viewed as the ``new'' target edge-length for~$x_i$ (resp. $x_j$) to achieve from~$x_j$ (resp. $x_i$). }
\label{shift}
\end{center}
\end{figure}


With the definition of the vertical shifting basis above, we have the following fact  as a straightforward consequence  of Proposition~\ref{pro:linearspan}.  

\begin{corollary}\label{cor:ontoeta}
Let ${H} = \left \{[e] \mid e \in E^* \right \}$ be the vertical shifting basis. Then, for an edge $e = (i,j) \in E^*$ with $i<j$, 
$$
\eta([e]) = e_ie_j^\top - e_j e_i^\top, 
$$
where $\{e_1,\ldots, e_{k+1}\}$ is the standard basis of $\R^{k+1}$. In particular, 
$$
\Sp\{\eta([e]) \mid e \in E^*\} = \so(k+1),
$$
and hence the image of 
$
\omega_H
$ 
contains an open neighborhood of $0$ in $\se(k)$.  
\end{corollary}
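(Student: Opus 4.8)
The plan is to reduce everything to a direct computation of $\eta([e])$ and then invoke the machinery already established in Proposition~\ref{pro:linearspan} together with the submersion argument of Corollary~\ref{cor:submersion}. First I would observe that each component $[e]_{lm}$ of a vertical shifting basis element is a \emph{constant} function (Definition~\ref{exmp:verticalshift}), so evaluating it at $\overline d_{lm}$ simply returns its constant value. Substituting the definition of $[e]$ into the formula~\eqref{eq:defineAh} for the entries of $A_{[e]}$, I would check that the only nonzero contributions come from the directed edges associated with $e = (i,j)$: the $(i,j)$ entry equals $\frac{1}{2}\left([e]_{ij} - [e]_{ji}\right) = \frac{1}{2}(1-(-1)) = 1$, the $(j,i)$ entry equals $\frac{1}{2}\left([e]_{ji} - [e]_{ij}\right) = -1$, and every other off-diagonal entry vanishes because both $[e]_{lm}$ and $[e]_{ml}$ are zero. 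This yields $\eta([e]) = e_i e_j^\top - e_j e_i^\top$, establishing the first identity.

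Next I would identify the span. The matrices $\{e_i e_j^\top - e_j e_i^\top \mid 1 \le i < j \le k+1\}$ are precisely the standard basis of $\so(k+1)$, whose dimension is $k(k+1)/2$. Since $G^*$ is a complete graph on $k+1$ vertices, its edge set $E^*$ contains every unordered pair $(i,j)$ with $i<j$, so the collection $\{\eta([e]) \mid e \in E^*\}$ is exactly this standard basis. Hence $\Sp\{\eta([e]) \mid e \in E^*\} = \so(k+1)$.

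Finally I would transfer this to $\se(k)$ and conclude the openness claim. Applying Proposition~\ref{pro:linearspan} to the family $\{[e] \mid e \in E^*\}$ (which has $k(k+1)/2$ elements), the equality $\Sp\{\eta([e])\} = \so(k+1)$ forces $\Sp\{d\omega_0([e]) \mid e \in E^*\} = \se(k)$. Exactly as in the proof of Corollary~\ref{cor:submersion}, this shows that the derivative $d\omega_{H,0}$ is surjective onto $\se(k)$, so $\omega_H$ is a submersion at $0$; by the submersion (open mapping) theorem its image then contains an open neighborhood of $0$ in $\se(k)$.

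I do not anticipate a genuine obstacle here: the substantive analytic content---the relationship~\eqref{eq:thequeen} linking $d\rho_0$ and $d\omega_0$, and the equivalence between spanning $\se(k)$ and spanning $\so(k+1)$---has already been absorbed into Proposition~\ref{pro:thequeenking} and Proposition~\ref{pro:linearspan}. The only point requiring care is the bookkeeping of the factor $\frac{1}{2}$ and the antisymmetry in~\eqref{eq:defineAh}, which is precisely what makes the image land on $e_i e_j^\top - e_j e_i^\top$ rather than a scalar multiple of it; verifying this sign and scaling is routine but must be carried out explicitly.
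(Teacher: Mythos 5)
Your proposal is correct and follows exactly the route the paper intends: the paper states this corollary as ``a straightforward consequence of Proposition~\ref{pro:linearspan},'' and your explicit computation of $A_{[e]}$ from~\eqref{eq:defineAh} (yielding the entries $+1$ and $-1$ at positions $(i,j)$ and $(j,i)$), the identification of $\{e_ie_j^\top - e_je_i^\top\}$ as the standard basis of $\so(k+1)$ via completeness of $G^*$, and the appeal to Proposition~\ref{pro:linearspan} plus the submersion argument of Corollary~\ref{cor:submersion} supply precisely the omitted details. No gaps.
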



\subsection{Proof of Theorem~\ref{thm:Main}}
We  prove in this section that system~\eqref{eq:ControlModel} is  $\epsilon$-controllable over the orbit $O_q$ of target configurations.    
Without loss of generality, we assume that $q$ is the initial state and $\overline q\in O_q$ is the  desired final state, i.e., $q_ 0 = q$ and $q_1 = \overline q$.  
Let $ \overline \alpha = \(\overline \theta, \overline b\)\in \SE(k)$ be the (unique) element such that 
\begin{equation}\label{eq:eqeq1}
\overline q = \overline \alpha \cdot q.
\end{equation} 
Since the exponential map $\exp: \se(k)\to\SE(k)$ 
is surjective, we let $\(\overline \Omega, \overline v\)\in \se(k)$ be such that its image under $\exp(\cdot)$ is $\overline \alpha$. More specifically, from~\eqref{eq:matrixexponential}, we have that 
\begin{equation}\label{eq:thetastarvstar}
\overline \theta= \exp\(\overline \Omega\) \hspace{10pt}  \mbox{and} \hspace{10pt} \overline b= \frac{\exp\(\overline \Omega\) - I}{\overline \Omega} \overline v. 
\end{equation}
Note, in particular, that from the last item of Proposition~\ref{pro:flowoninvorb}, 
if $O_q$ were an $f$-invariant orbit with 
$
f(q) =  (r\overline \Omega, r \overline v) \cdot q
$ 
for some $r> 0$, then we would have that
$ 
\phi_{1/r}(q; f) = \overline q 
$. Said in another way, if $O_q$ were such an $f$-invariant orbit, then we would reach $\overline q$ (from $q$) in $1/r$ units of time.


Now, let ${H}$ be the vertical shifting basis introduced in  Defininition~\ref{exmp:verticalshift}. Then, from Corollary~\ref{cor:ontoeta}, the image of the map: $$\omega_H: \Sp({H})\cap \cal{U}\longrightarrow \se(k)$$ contains an open ball $B\subset \se(k)$ centered at~$0$. Because we have shown that $d\omega_H$ is of full rank at~$0$, we know from the implicit function theorem that $\omega_H$ is actually a diffeomorphism between $\Sp({H})\cap \cal{U}$ and $B$, for $\cal U$ small enough. 
Thus, for $r$ small enough so that $\(r \overline \Omega, r \overline v\) \in B$, we can set 
\begin{equation}\label{eq:hr}
h_r:= \omega_H^{-1}\(r \overline \Omega, r \overline v\) \in \Sp({H}) \cap \cal{U}.
\end{equation}
As a function of $r$, $h_r$ is continuously differentiable,  with $h_0 = 0$.

To proceed, recall that the map $\rho$ defined in~\eqref{eq:definerho} is continuously (Fr\'echet) differentiable. It sends an element $h \in \cal{U}$ to a configuration $p$, with the following properties satisfied: 
\begin{enumerate}
\item Let $g:=f + h$. Then, $O_p$ is an exponentially stable $g$-invariant orbit.
\item The local stable manifold of $p$ intersects $O_q$ transversally at $q$. 
\end{enumerate}
Now, with a slight abuse of notation, we let
\begin{equation}\label{eq:defrhor}
g_r: = f+ h_r\hspace{10pt} \mbox{and} \hspace{10pt} \rho_r:= \rho(h_r). 
\end{equation}
Because $\rho(0) = q$ and $\rho_r$ is continuously differentiable in ${r}$. So, for any $\epsilon>0$, we can shrink ${r}$ if necessary such that $\|\rho_r - q\| < \epsilon/2$.  
We can further strengthen the argument above by requiring that the two trajectories $\phi_t(q;g_r)$ and 
$\phi_t(\rho_r; g_r)$ are sufficiently close to each other. Specifically, we note that $O_{\rho_r}$ is exponentially stable for $r$ small, and moreover, $q$ lies in the stable manifold of~$\rho_r$ under~$g_r$. We thus have the following fact:


\begin{lemma}
For a given $\epsilon >0$, there exists ${r}>0$ so that the two trajectories $\phi_t(q;g_r)$ and 
$\phi_t(\rho_r; g_r)$ are $(\epsilon/2)$-close to each other:
\begin{equation}\label{eq:epsilonclose}
\|\phi_t(q;g_r) -\phi_t(\rho_r; g_r) \| < \epsilon/2, \hspace{10pt} \forall t \ge 0
\end{equation}
and
$$
\lim_{t\to\infty}\|\phi_t(q;g_r) -\phi_t(\rho_r; g_r) \|  = 0.
$$\,
\end{lemma}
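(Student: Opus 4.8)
The plan is to exploit the exponential stability of the orbit $O_{\rho_r}$ together with the fact that $q$ lies in the local stable manifold $W^s_U(\rho_r; g_r)$, as established in the construction preceding the lemma. The key quantitative input is that exponential stability of $O_{\rho_r}$ means there are constants $C \ge 1$ and $\lambda > 0$ (uniform for $r$ in a small neighborhood of $0$, since the Hessian eigenvalues and hence the contraction rate depend continuously on $r$) such that any two trajectories starting in the same local stable manifold fiber contract toward each other. Precisely, because $q \in W^s_U(\rho_r; g_r)$, by the definition of the stable manifold we already have $\lim_{t\to\infty}\|\phi_t(q;g_r) - \phi_t(\rho_r;g_r)\| = 0$, which is the second displayed equation of the lemma; this part is essentially immediate from membership in $W^s$.

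The substantive part is the uniform bound~\eqref{eq:epsilonclose} for \emph{all} $t \ge 0$, not just in the limit. First I would note that at $t = 0$ the distance is $\|q - \rho_r\| = \|\rho(h_r) - q\|$, and since $\rho$ is continuous with $\rho(0) = q$ and $h_r \to 0$ as $r \to 0$, this can be made smaller than $\epsilon/2$ by shrinking $r$. Next I would invoke the standard estimate for trajectories converging to an exponentially stable invariant manifold: there is a neighborhood and constants $C, \lambda$ so that for a point $p''$ in the local stable manifold of $p'$,
\begin{equation*}
\|\phi_t(p''; g_r) - \phi_t(p'; g_r)\| \le C e^{-\lambda t}\, \|p'' - p'\|, \hspace{10pt} \forall t \ge 0.
\end{equation*}
Applying this with $p'' = q$, $p' = \rho_r$ gives $\|\phi_t(q;g_r) - \phi_t(\rho_r;g_r)\| \le C e^{-\lambda t}\|q - \rho_r\| \le C\|q-\rho_r\|$ for all $t \ge 0$. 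Thus it suffices to choose $r$ small enough that $C\|q - \rho_r\| < \epsilon/2$; since $C$ is locally uniform and $\|q - \rho_r\| \to 0$ as $r \to 0$, such an $r$ exists, and this single choice simultaneously yields both the uniform-in-$t$ bound and the limit.

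The main obstacle I anticipate is justifying the contraction estimate with a constant $C$ that is \emph{uniform} in $r$ for small $r$, rather than merely existing for each fixed $r$. The stable manifold and its contraction rate come from the hyperbolicity of $O_{\rho_r}$, i.e.\ from the spectrum of $\Hess(\rho_r; g_r)$ restricted to $N^s(\rho_r)$; by Lemma~\ref{lem:perturbation1} and the continuous dependence of the perturbed dynamics on $h_r$ (hence on $r$), these spectral quantities vary continuously, so one can fix a rate $\lambda > 0$ and constant $C$ valid on a whole neighborhood of $r = 0$. I would cite the invariant-manifold theory of~\cite{hirsch2006invariant} (or the equivariant results of~\cite{field1980equivariant} already invoked for Lemma~\ref{lem:perturbation1}) for the existence of such uniform constants, so that the estimate is not re-derived from scratch. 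With the uniform constants in hand, the proof reduces to the two elementary facts above: the $t=0$ smallness from continuity of $\rho$, and the global-in-time bound from exponential contraction, both secured by a single sufficiently small choice of $r$.
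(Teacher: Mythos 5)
Your proposal is correct and follows essentially the same route the paper indicates: the paper offers no explicit proof, but the two ingredients it cites just before the lemma---that $\|\rho_r - q\|$ can be made small by continuity of $\rho$ at $0$, and that $q$ lies in the stable manifold of the exponentially stable orbit $O_{\rho_r}$ under $g_r$---are exactly what you use, with the standard exponential-attraction estimate along stable fibers supplying the uniform-in-$t$ bound. Your attention to the uniformity of the contraction constants in $r$ is a worthwhile detail that the paper leaves implicit.
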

In the sequel, we assume that~${r}>0$ is sufficiently small so that~\eqref{eq:epsilonclose} is satisfied. 
From~\eqref{eq:relationpfp} and~\eqref{eq:hr}, we have the following relationship:
 $$g_r = \({r} \overline \Omega, {r} \overline v\) \cdot \rho_r.$$ 
We let $\alpha(t):= \exp\(\({r} \overline \Omega, {r} \overline v\)t\)$. Note, in particular, that $\alpha(1/r) = \overline{\alpha}$ (from~\eqref{eq:matrixexponential} and~\eqref{eq:thetastarvstar}). 
Now, appealing to the last item of Proposition~\ref{pro:flowoninvorb}, we obtain that   
\begin{equation}\label{eq:eqeq2}
\phi_{t}(\rho_r; g_r) = \alpha(t) \cdot \rho_r, \hspace{10pt} \forall t \ge 0.   
\end{equation} 
On the other hand, for any $\alpha \in \SE(k)$, the affine transformation $\alpha: p \mapsto \alpha\cdot p$ is an isometry, i.e., 
for any two configurations $p$ and $p'$, we have
$$
\|\alpha\cdot p - \alpha \cdot p' \| = \|p - p'\|. 
$$
Combining this fact with~\eqref{eq:eqeq1} and~\eqref{eq:eqeq2}, we obtain that   
\begin{equation}\label{eq:halfway}
\|\phi_{t}(\rho_r; g_r) -  \alpha(t) \cdot q\|   = \|\alpha(t)\cdot \rho_r - \alpha(t) \cdot q\| 
 = \| \rho_r - q\| < \epsilon/2. 
\end{equation}
Now, appealing to~\eqref{eq:epsilonclose} and~\eqref{eq:halfway}, and the triangle inequality, we conclude that 
\begin{multline}\label{eq:epsilonclose}
d(\phi_t(q; g_r), O_q) \le 
\|\phi_t (q; g_r) -  \alpha(t) \cdot q\|  \le  \\  \|\phi_{t}(q;g_r) -\phi_{t}(\rho_r; g_r) \| 
   +  \|\phi_{t}(\rho_r; g_r) -  \alpha(t)\cdot q\|  < \epsilon.
\end{multline}
Thus, the trajectory $\phi_t(q; g_r)$ is $\epsilon$-close to $O_q$ for all $t \ge 0$. 
Furthermore, if we let $t = 1/r$ and use the fact that $\overline q =  \overline \alpha \cdot q = \alpha(1/r) \cdot q$,  then from~\eqref{eq:epsilonclose}, 
$$
\|\phi_{1/r} (q; g_r) -  \overline q\| = \|\phi_{1/r} (q; g_r) -  \alpha(1/r) \cdot q\| < \epsilon.  
$$
This then  establishes the $\epsilon$-controllability of system~\eqref{eq:Model1} over $O_q$. Indeed,  we can simply define a constant control law $u[0,1/{r}]$ as follows: for each edge $i\to j\in E^*_d$, we set
$$
u_{ij}(t):= h_{r,ij}, \hspace{10pt} \forall t\in [0,1/{r}].  
$$
Then, the trajectory generated by the formation control system~\eqref{eq:ControlModel}, with $q$ the initial condition, is exactly $\phi_t(q; g_r)$ for $t\in [0,1/{r}]$. 

We further note the following fact: If we set $u(t) = 0$ for all $t >  1/{r}$, then for $t>1/r$, the formation control system~\eqref{eq:ControlModel} is reduced to the original gradient dynamics~\eqref{eq:Model1}, where  the orbit $O_q$ of target configurations is an exponentially stable critical orbit of the associated potential function~$\Phi$. Thus, for $\epsilon$ sufficiently small, the configuration $\phi_{1/{r}}(q; g_r)$ lies in a local stable manifold $W^s_U(\widetilde q; f)$ for some $\widetilde q$ in $O_q$. Moreover, by shrinking $\epsilon$ if necessary, the distance $\|\widetilde q - \overline q\|$ can be made arbitrarily small. The arguments above thus imply the following fact as a corollary to Theorem~\ref{thm:Main}: 

\begin{corollary}\label{cor:mainthm}
Let $q, \overline q\in O_q$ be the initial and the desired final states, respectively. Then, for any error tolerance $\epsilon > 0$,  there is a time $T > 0$ and a constant control law $u[0,T]$, which steers the formation control system~\eqref{eq:ControlModel} along a trajectory $q(t)$, with $q(0) = q$, over the time period $[0, T]$ such that the following hold:
\begin{enumerate}
\item For any $t\in [0, T]$, we have $d(q(t), O_q) < \epsilon$. Moreover, $\|q(T) - \overline q\| < \epsilon$. 
\item Let $q'(t)$, with $q'(0) = q(T)$,  be the trajectory generated by the gradient system~\eqref{eq:Model1} (or equivalently, we set $u([T,\infty]) = 0$ for~\eqref{eq:ControlModel}), then $q'(t)$ converges exponentially fast to a configuration $\widetilde q$ in $O_q$. Moreover, $\|\widetilde q - \overline q\| < \epsilon$.
\end{enumerate}   
\end{corollary}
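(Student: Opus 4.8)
The plan is to assemble the corollary directly from the construction already carried out in the proof of Theorem~\ref{thm:Main}, reinterpreting the constant perturbation $h_r$ as a constant control law with a finite horizon, and then to append one continuity argument that pins down the final resting configuration $\widetilde q$. The key observation is that everything quantitative has already been proved; what remains is to read off the two conclusions and to order the quantifiers correctly.

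First, for item (i) I would set $T := 1/r$ and define the constant control by $u_{ij}(t) := h_{r,ij}$ for each $i\to j \in E^*_d$ and all $t\in[0,T]$, where $h_r = \omega_H^{-1}(r\overline\Omega, r\overline v)$ is the perturbation built above. With this choice the trajectory $q(t)$ of~\eqref{eq:ControlModel} coincides with $\phi_t(q; g_r)$ for $g_r = f + h_r$, so the triangle-inequality estimates derived in the proof of Theorem~\ref{thm:Main} give $d(q(t), O_q) < \epsilon$ for all $t\in[0,T]$ together with $\|q(T) - \overline q\| < \epsilon$. This is exactly item (i), and needs no new computation.

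For item (ii), switching the control off at time $T$ returns the dynamics to the gradient flow~\eqref{eq:Model1}, for which $O_q$ is an exponentially stable critical orbit. Here I would invoke Lemma~\ref{lem:fibersmoothsubmfld}: there is an $\SE(k)$-invariant neighborhood $U$ of $O_q$ foliated by the local stable manifolds $W^s_U(\widetilde q; f)$, each meeting $O_q$ transversally in the single point $\widetilde q$. For $\epsilon$ small, $q(T)\in U$ lies on exactly one leaf $W^s_U(\widetilde q; f)$; since every point of the critical orbit is an equilibrium, $\phi_t(\widetilde q; f) = \widetilde q$, so the definition of the stable manifold yields $q'(t) \to \widetilde q$, and exponential stability of $O_q$ (Definition~\ref{def:firstdefforexp}) upgrades this to exponential convergence.

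The only genuinely new point is the bound $\|\widetilde q - \overline q\| < \epsilon$, and this is where I would be most careful with the order of quantifiers, as it is the one place the naive argument could give a bound with the wrong $\epsilon$. Consider the map $\pi$ sending a point of $U$ to the base point $\widetilde q$ of the leaf through it; it is continuous (this is precisely the transversal foliation structure of Lemma~\ref{lem:fibersmoothsubmfld}, together with the splitting $T_qO_q \oplus N^s(q) = \R^{kn}$ of Remark~\ref{rmk:definitionofNs}) and satisfies $\pi|_{O_q} = \mathrm{id}$, so $\pi(\overline q) = \overline q$. Hence, given the target tolerance $\epsilon$, I would first use continuity of $\pi$ at $\overline q$ to choose $\delta \in (0,\epsilon]$ with $\|p - \overline q\| < \delta \Rightarrow \|\pi(p) - \overline q\| < \epsilon$, and only then run the construction of item (i) with tolerance $\delta$ in place of $\epsilon$. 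Then $q(T)$ is $\delta$-close to $\overline q$, so $\widetilde q = \pi(q(T))$ is $\epsilon$-close to $\overline q$, while item (i) still holds since $\delta \le \epsilon$. The main (and essentially only) obstacle is thus not any hard analysis but getting this quantifier order right and justifying the continuity of $\pi$ from the smooth transversal foliation guaranteed by Lemma~\ref{lem:fibersmoothsubmfld}.
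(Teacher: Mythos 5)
Your proposal is correct and follows essentially the same route as the paper: item (i) is read off from the construction in the proof of Theorem~\ref{thm:Main} with $T=1/r$ and $u_{ij}\equiv h_{r,ij}$, and item (ii) follows by switching the control off and using the stable-manifold foliation of a neighborhood of the exponentially stable critical orbit $O_q$. Your explicit introduction of the leaf-projection $\pi$ and the $\delta$-before-$\epsilon$ quantifier ordering is a more careful rendering of the paper's terser ``by shrinking $\epsilon$ if necessary, the distance $\|\widetilde q-\overline q\|$ can be made arbitrarily small,'' but it is the same argument.
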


We further provide in Fig.~\ref{trajectory} an illustration of Theorem~\ref{thm:Main} and the corollary above.
\begin{figure}[h]
\begin{center}
\includegraphics[width=0.6\textwidth]{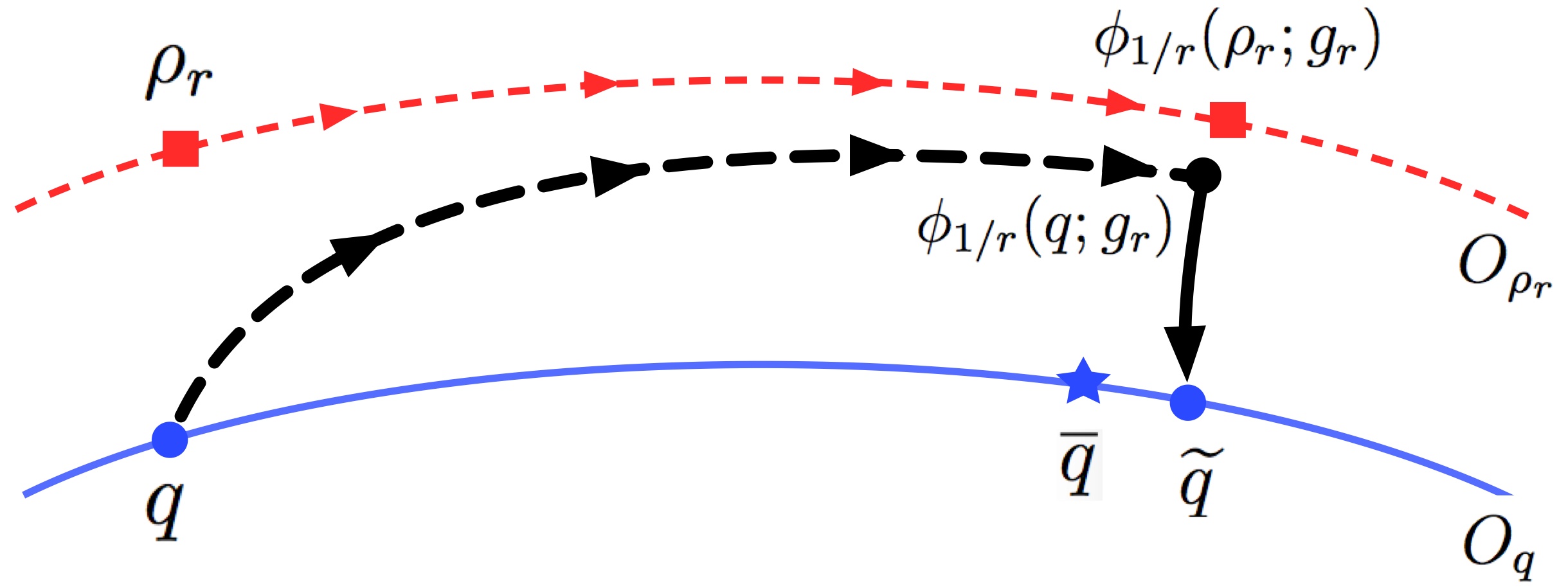}
\caption {\small {\it Illustration of Theorem~\ref{thm:Main} and Corollary~\ref{cor:mainthm}.} Let $q$ be the initial state and $\overline q = \overline \alpha \cdot q$  the desired final state. The vector field $g_r$ and the configuration $\rho_r$ are defined in~\eqref{eq:defrhor}. For $r$ sufficiently small, $O_{\rho_r}$ is an exponentially stable $g_r$-invariant orbit, with $g_r(\rho_r) = \(r\overline \Omega, r\overline v\)\cdot \rho_r$.  The red-dashed (resp. black-dashed) segment with arrows represents the trajectory $\phi_{t}(\rho_r; g_r)$ (resp. $\phi_t(q;g_r)$), for $t\in [0,1/r]$.  By our earlier construction, $q$ is in the stable manifold of $\rho_r$ under $g_r$, and hence $\phi_t(q; g_r)$ converges to $\phi_t(\rho_r; g_r)$ as $t\to\infty$.  At the time $t  = 1/r$,  we have that $\phi_{1/r}(\rho_r; g_r) = \overline \alpha \cdot \rho_r$. Note, in particular, that both $\overline q$ and $\phi_{1/r}(q; g_r)$ are in the stable manifold of $\overline \alpha \cdot \rho_r$ under $g_r$, and the two configurations are $\epsilon$-close to each other.  For $t \ge 1/r$, we set $u = 0$, and hence our formation system~\eqref{eq:ControlModel} is reduced to the original gradient dynamics~\eqref{eq:Model1}. Thus, for trajectory generated by the gradient system, with $\phi_{1/r}(q; g_r)$ the initial condition, converges to a configuration~$\widetilde q$ in $O_q$. Such a trajectory is plotted using a black-solid segment with an arrow. Moreover, the configuration $\widetilde q$ can be made arbitrarily close to $\overline q$ by shrinking $r$, and hence  the distance between $\phi_{1/r}(q; g_r)$ and $\overline q$.}
\label{trajectory}
\end{center}
\end{figure}

\section{On the robustness issue in formation control}\label{sec:robustnessissue}
In the previous section, we have shown that the formation control system~\eqref{eq:ControlModel} can be   steered by controlling the interactions among  $(k+1)$ agents, provided these agents form a nondegenerate $k$-simplex in the formation. In this section, we demonstrate that this approach can lead to a practical scheme to remedy the robustness issue  of system~\eqref{eq:Model1}  highlighted  in~\cite{mou2016undirected}.


We first briefly describe the  cause of the robustness issue that arises: Gradient control laws, widely used in formation control, yield reciprocal interactions between agents, i.e.,   
\begin{equation}\label{eq:assumption1}
f_{ij} = f_{ji}, \hspace{10pt} \forall (i,j)\in E.
\end{equation} 
In practice, the finite precision of our measurements and actuation  prevents the implementation of such a perfect reciprocity.  This lack of exact reciprocity, which in typical control scenarios results in the stabilization of the system at a nearby equilibrium, was shown to lead in the case of formation control to a rigid, non-trivial, motion of the formation. This periodic motion could be  of potentially large radius. 

{\color{black} We represent the lack of reciprocity by a perturbed dynamics $(f + h)$, with $f = (f_{ij})$ satisfying~\eqref{eq:assumption1}, and $h \in {\cal H}_G$. } Unlike the case we dealt with in the previous section, where $h$ is in ${\cal H}_{G^*}$,  the perturbation $h$ here is in $\cal{H}_{G}$. Said in another way, each $h_{ij}$, for $i\to j\in E_d$, can be nonzero.  Denote by $g:= f + h$  the vector field of the generalized formation system~\eqref{eq:Model2}.  From  arguments similar to the ones used  in Subsection~\ref{sec:perturba}, we have that for $\|h\|$ sufficiently small, there is  a unique exponentially stable $g$-invariant orbit $O_p$ near the critical orbit $O_q$ of the unperturbed system. The robustness issue is a consequence of the fact that the vector field $g$, when restricted to the orbit~$O_p$, does not necessarily vanish. Hence, for any configuration $p'\in O_p$, the formation is not at rest,  but  drifts within the orbit $O_p$, with the trajectory described by~\eqref{eq:soloninvariantrobit} in Proposition~\ref{pro:flowoninvorb}. We also refer to~\cite{sun2013non} for simulations of such a drift in lower dimensional cases where $k = 2, 3$.  

We now show how the formation control method described in~\eqref{eq:ControlModel} can be used to fix the robustness issue. Roughly speaking, we show that for any perturbation $h$ with $\|h\|$ sufficiently small, there exists a   \emph{constant} control law $u$ for~\eqref{eq:ControlModel} that {\it offsets} the drift  caused by the perturbation.  When equipped with this offset, the system will stabilize an orbit $O_p$  close to the orbit $O_q$ of target configurations. This  orbit $O_p$, moreover,  contains only equilibrium points and is \emph{exponentially} stable.  As a result, the compensating offset $u$ will, in the presence of the perturbation~$h$, stabilize the system at inter-agent distances slightly different from the target edge-lengths, and prevent a translation and/or rotation of the formation.

The offset $u$ certainly depends on the perturbation $h$. While the perturbation $h$ affects the nominal dynamics at {\it all} agents, the offset $u$ is  only applied to the agents designated by a clique $G^*$ in the formation graph. Hence, in order to fully resolve the robustness issue using the approach of this paper, one needs to close the loop by developing a method which assigns a proper offset $u$ from the measurements available to the agents of the clique $G^*$. 
We believe that this is feasible, but is outside the scope of this paper. For the remainder of the paper, we solely focus on showing that for any global perturbation $h$, global in the sense that it applies to the interactions among all neighboring agents, there exists a local  compensating offset $u$, local in the sense that it applies only to the interactions among the agents of the clique.

We now state  the result in precise terms. Recall that $f_i$ is the $i$-th component of $f$ as defined in the earlier sections. Similarly, we let $g_i$ be the $i$-th component of the perturbed dynamics $g = f + h$, for $h\in {\cal H}_G$. The formation control model~\eqref{eq:ControlModel} now takes the following form: 
\begin{equation}\label{eq:ControlModel2}
\dot {x}_i = 
\left\{
\begin{array}{ll}
g_i(p)+ \sum_{j \in \cal{N}^*_i } u_{ij} (x_j - x_i)   & \text{ if } i\in V^* \\
g_i(p) & \text{ otherwise},
\end{array} \right.
\end{equation}  
with the gradient vector field $f$ replaced by the perturbed dynamics $g$. 
We still let $u = (u_{ij})$ be the ensemble of the $u_{ij}$'s, which itself can be treated as an element in ${\cal F}$.  
We establish below the main result of the section: 

\begin{theorem}\label{thm:robustness}
There is an open neighborhood $\cal{U}$ of $0$ in ${\cal H}_G$ and an $\SE(k)$-invariant open neighborhood $U$ of $O_q$ in $P$ such that  for any perturbation $h \in \cal{U}$, there is a  compensating offset ${u}$ for which the control system~\eqref{eq:ControlModel2} possesses a  unique invariant orbit $O_p$ in $U$, which is moreover comprised only of equilibrium points. The orbit $O_p$ is  exponentially stable.   
\end{theorem}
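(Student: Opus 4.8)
The plan is to recognize the compensating offset $u$ as itself a \emph{constant} perturbation supported on the clique, and then to invoke the implicit function theorem to select it so that the drift it produces together with $h$ cancels exactly. Concretely, a constant control $u = (u_{ij})$ with $i\to j \in E^*_d$ acts on~\eqref{eq:ControlModel2} precisely as the perturbation $w = (w_{ij})$ whose components are the constant functions $w_{ij} \equiv u_{ij}$ (and $w_{ij} = 0$ off the clique); since $E^*_d \subset E_d$ we may regard $w \in {\cal H}_G$, so that~\eqref{eq:ControlModel2} is exactly the perturbed system with vector field $f + (h + w)$. I would first extend the maps $\rho$ and $\omega$ of Subsection~\ref{sec:perturba}, together with Lemma~\ref{lem:perturbation1} and Proposition~\ref{pro:thequeenking}, from ${\cal H}_{G^*}$ to the larger Banach space ${\cal H}_G$; the constructions are verbatim, as they use only smallness of the perturbation (this is the ``arguments similar to the ones used in Subsection~\ref{sec:perturba}'' already invoked at the start of the section). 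This produces a continuously Fr\'echet-differentiable $\omega: {\cal U} \subset {\cal H}_G \to \se(k)$ assigning to each small global perturbation the infinitesimal generator of the drift on its $(f+\,\cdot\,)$-invariant orbit.

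Next I would define $\Psi(h, w) := \omega(h + w)$ for $h$ and $w \in \Sp(H)$ near the origin, where $H$ is the vertical shifting basis of Definition~\ref{exmp:verticalshift}. Since $\Psi(0,0) = \omega(0) = 0$, it suffices to solve $\Psi(h,w)=0$ for $w$ as a function of $h$. The partial derivative $\partial_w \Psi(0,0)$ equals $d\omega_0$ restricted to $\Sp(H)$, namely the map $d\omega_{H,0}$ appearing in Corollary~\ref{cor:submersion}. By Corollary~\ref{cor:ontoeta} this restricted derivative is surjective onto $\se(k)$; and because $\dim \Sp(H) = |E^*| = k(k+1)/2 = \dim \se(k)$, surjectivity forces it to be a linear isomorphism. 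The implicit function theorem (applicable since ${\cal H}_G$ is Banach and the other two spaces are finite-dimensional) then yields a continuously differentiable map $h \mapsto w(h)$, defined for $\|h\|$ small, with $w(0) = 0$ and $\omega(h + w(h)) = 0$.

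Finally I would read off the offset and verify the three assertions. The compensating control is $u_{ij} := w(h)_{ij}$ for $i \to j \in E^*_d$, a constant control as claimed. For this choice the total perturbation $h + w(h)$ has vanishing drift generator $\omega(h+w(h)) = (0,0) \in \se(k)$; hence, by the extended Lemma~\ref{lem:perturbation1}, the controlled system~\eqref{eq:ControlModel2} admits a \emph{unique} exponentially stable invariant orbit $O_p = O_{\rho(h+w(h))}$ inside the prescribed $\SE(k)$-invariant neighborhood $U$ (which is where the claimed $\cal U$ and $U$ come from). That this orbit consists only of equilibria is immediate from Proposition~\ref{pro:flowoninvorb}: the field at $p = \rho(h+w(h))$ is $(\Omega_p, v_p)\cdot p = (0,0)\cdot p = 0$, so $p$ is an equilibrium, and by $\SE(k)$-equivariance the field vanishes at every point of $O_p$.

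The main obstacle I anticipate is not the implicit-function step but the preparatory verification that $\rho$, $\omega$, and the perturbation lemma genuinely transfer from ${\cal H}_{G^*}$ to the larger ${\cal H}_G$ with the stated smoothness and uniqueness. One must check that enlarging the support of admissible perturbations does not destroy the transversality and hyperbolicity underlying the stable-manifold intersection that \emph{defines} $\rho$, so that $\omega$ stays continuously Fr\'echet-differentiable on ${\cal H}_G$; it is exactly this extension that legitimizes the identity $\partial_w \Psi(0,0) = d\omega_{H,0}$ and lets the dimension count close the argument.
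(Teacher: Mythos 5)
Your argument is correct in its essential logic, but it takes a genuinely different route from the paper's. The paper never passes back through the map $\omega$ in this section: it applies the implicit function theorem directly to the affine-in-$(h,u)$ map $\xi(h,u,p)=f(p)+h(p)+u(p)$ on ${\cal H}_G\times\Sp(H)\times S$, where $S=W^s_U(q;f)$ is a slice at $q$ (Lemma~\ref{lem:diffslicethm} and the lemma following it), and solves $\xi(h,u,p)=0$ for the pair $(u,p)$ simultaneously (Proposition~\ref{pro:implicitfunction}). The non-degeneracy needed there is the splitting $\R^{kn}=R_2\oplus R_3$ with $R_2=\{u(q)\mid u\in\Sp(H)\}$ and $R_3=N^s(q)$, and the transversality $R_2\cap R_3=\{0\}$ is reduced via~\eqref{eq:charles} to the injectivity of $\eta$ on $\Sp(H)$ --- the very same algebraic fact (Corollary~\ref{cor:ontoeta}) that underlies your claim that $d\omega_{H,0}$ is an isomorphism. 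What the paper's formulation buys is economy: $\xi$ is manifestly continuously Fr\'echet differentiable, the equilibrium orbit drops out directly from $\xi=0$ together with $\SE(k)$-equivariance, and one never needs the continuous Fr\'echet differentiability of $\rho$ and $\omega$ over the larger space ${\cal H}_G$. Your route instead composes two steps --- first the $C^1$ persistence of the invariant orbit and of its drift generator under perturbations ranging over all of ${\cal H}_G$ (the burden you correctly flag; it is discharged by the same Theorem~4.1 of Hirsch--Pugh--Shub that the paper cites for Proposition~\ref{pro:thequeenking}, and the restriction of the extended $\omega$ to ${\cal H}_{G^*}$ agrees with the original by the uniqueness in Lemma~\ref{lem:perturbation1}), and then a finite-dimensional implicit function theorem for $\omega(h+w)=0$. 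Your concluding observations are sound: surjectivity of $d\omega_{H,0}$ between spaces of equal dimension $k(k+1)/2$ forces it to be an isomorphism, $\omega(h+w(h))=0$ forces the entire orbit to consist of equilibria by Proposition~\ref{pro:flowoninvorb} and equivariance, and uniqueness plus exponential stability come from Lemma~\ref{lem:perturbation1} exactly as in the paper.
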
  

The proof of Theorem~\ref{thm:robustness} relies on the  following two results:

\begin{lemma}[Differentiable slice theorem]\label{lem:diffslicethm}
Let $\cal{A}$ be a Lie group of dimension $m$ acting on a Euclidean space $P$. Let $O_p:= \cal{A}\cdot p$ be an orbit through $p\in P$. Assume that the stabilizer of $p$ is trivial and thus $\dim O_p=m$.  Then, there is a submanifold $S$ of codimension $m$ in $P$ intersecting $O_p$ transversally at $p$, and an embedding $\iota: S\times O_p \to P$ mapping  $S\times O_p$ diffeomorphically onto an $\cal{A}$-invariant open neighborhood $U$ of $O_p$ in $P$.  Moreover, the map $\iota$ satisfies  the condition that 
\begin{equation}\label{eq:differentialslice}
\iota(s, \alpha\cdot p) = \alpha\cdot \iota(s,p), \hspace{10pt} \forall \alpha\in \cal {A}. 
\end{equation}
Any set $S$ for which there exists an embedding $\iota$ satisfying the above conditions is called a {\bf slice} at $p$.
\end{lemma}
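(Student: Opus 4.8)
The plan is to build the slice $S$ and the embedding $\iota$ by hand, exploiting that $P$ is a genuine Euclidean space (so that no abstract exponential map is needed) and that the triviality of $\stab(p)$ makes the orbit map a diffeomorphism. First I would fix the orthogonal splitting $T_pP = T_pO_p \oplus N_p$ with $N_p := (T_pO_p)^\perp$, and take the slice to be the affine piece $S := (p + N_p)\cap B_\delta(p)$ for a radius $\delta>0$ to be shrunk later. By construction $T_pS = N_p$ is complementary to $T_pO_p$, so $S$ meets $O_p$ transversally at $p$, which settles the transversality claim for free.

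Next, since stabilizers of points on a common orbit are conjugate, every point of $O_p$ has trivial stabilizer, so the orbit map $\Psi:\cal{A}\to O_p$, $\alpha\mapsto\alpha\cdot p$, is a bijective immersion; combined with the hypothesis $O_p\approx\cal{A}$ it is a diffeomorphism with smooth inverse $\Psi^{-1}$. I would then define
$$\iota(s,q):=\Psi^{-1}(q)\cdot s, \qquad (s,q)\in S\times O_p,$$
which is smooth and well defined because $\Psi^{-1}(q)$ is the unique group element carrying $p$ to $q$. The equivariance identity is immediate from $\Psi^{-1}(\alpha\cdot q)=\alpha\,\Psi^{-1}(q)$ and associativity of the action, giving $\iota(s,\alpha\cdot q)=\alpha\cdot\iota(s,q)$; this specializes to~\eqref{eq:differentialslice} since $\Psi^{-1}(p)=e$ forces $\iota(s,p)=s$.

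The heart of the argument is to show that $\iota$ is a diffeomorphism onto an open neighborhood. I would first compute $d\iota_{(p,p)}$: freezing $q=p$ gives $s\mapsto s$, the inclusion of $N_p$, while freezing $s=p$ gives $q\mapsto\Psi^{-1}(q)\cdot p=q$, the inclusion of $T_pO_p$; hence $d\iota_{(p,p)}$ is the identity of $T_pP=N_p\oplus T_pO_p$, an isomorphism, so $\iota$ is a local diffeomorphism near $(p,p)$ by the inverse function theorem. Equivariance then upgrades this to a local diffeomorphism near every $(p,q)$ with $q\in O_p$, because near such a point $\iota$ factors as $L_\alpha\circ\iota\circ(\mathrm{id}\times L_{\alpha^{-1}})$ with $\alpha$ acting diffeomorphically on both factors.

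The main obstacle is passing from local to global injectivity, that is, choosing a single $\delta$ for which $\iota$ is injective on all of $S\times O_p$; this is where the embeddedness of $O_p$ (equivalently, $O_p\approx\cal{A}$ as an embedded submanifold of the Euclidean space $P$) is essential, since a merely immersed orbit could return arbitrarily close to itself and defeat any fixed $\delta$. I would handle this exactly as in the classical tubular neighborhood theorem for the embedded submanifold $O_p$: local injectivity together with the embedding property yields a uniform $\delta$ for which $\iota$ is injective, hence a diffeomorphism onto its image $U:=\iota(S\times O_p)$, which is open by invariance of domain. Finally, $\cal{A}$-invariance of $U$ follows from equivariance, since $\alpha\cdot U=\iota(S\times(\alpha\cdot O_p))=\iota(S\times O_p)=U$, and $O_p\subseteq U$ because $\iota(p,q)=q$ for all $q\in O_p$. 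Declaring this $S$ the slice then completes the proof.
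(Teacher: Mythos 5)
The paper does not actually prove this lemma: it states the differentiable slice theorem as a known result (citing the equivariant-dynamics literature elsewhere) and only sketches, after the statement, how $\iota$ is built from a given slice, namely $\iota(s,\alpha\cdot p)=\alpha\cdot s$ using the uniqueness of $\alpha$ guaranteed by the trivial stabilizer. Your proof supplies the missing argument, and your $\iota(s,q)=\Psi^{-1}(q)\cdot s$ is exactly the paper's construction; what you add is a concrete choice of slice (the affine normal slice $(p+N_p)\cap B_\delta(p)$), the computation that $d\iota_{(p,p)}$ is the identity of $N_p\oplus T_pO_p=T_pP$, and the equivariant propagation of the local diffeomorphism. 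This is all correct and is the standard tubular-neighborhood route. Two small points. First, you verify invertibility of $d\iota$ only along $\{p\}\times O_p$; by equivariance the set where $d\iota$ is invertible has the form $W\times O_p$ with $W\ni p$ open in $S$, so you should say explicitly that $\delta$ is shrunk so that $S\subset W$. Second, your "uniform $\delta$" injectivity step is the one place where embeddedness of $O_p$ alone does not suffice when the orbit is noncompact: a noncompact embedded submanifold need not admit a uniform tubular radius, and what is really needed is properness of the action (equivalently here, that only group elements near the identity can map points of a small slice back into it). This holds in the paper's setting because $\SE(k)$ acts properly and by isometries on $\R^{nk}$ and the orbit of a full-rank configuration is closed; the lemma as stated for an arbitrary Lie group action is in fact not quite self-contained on this point, and you correctly sensed that this is where the content lies.
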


The differentiable slice theorem is about a decomposition of an open neighborhood $U$ of an orbit $O_p$ into a product of two transversal submanifolds $S$ and $O_p$. Given a slice $S$, the map $\iota$ can be constructed explicitly as follows: Since the stabilizer of $p$ is trivial, for a given $p'\in O_p$, there is a unique $\alpha\in \cal{A}$ such that $p' = \alpha\cdot p$ (and hence $O_p\approx \cal{A}$). We then define $\iota$ by sending $(s, p')$ to $\alpha\cdot s$. We note that the choice of a slice may not be unique. We further illustrate the differentiable slice theorem in Fig.~\ref{slice}.

\begin{figure}[h]
\begin{center}
\includegraphics[width=0.6\textwidth]{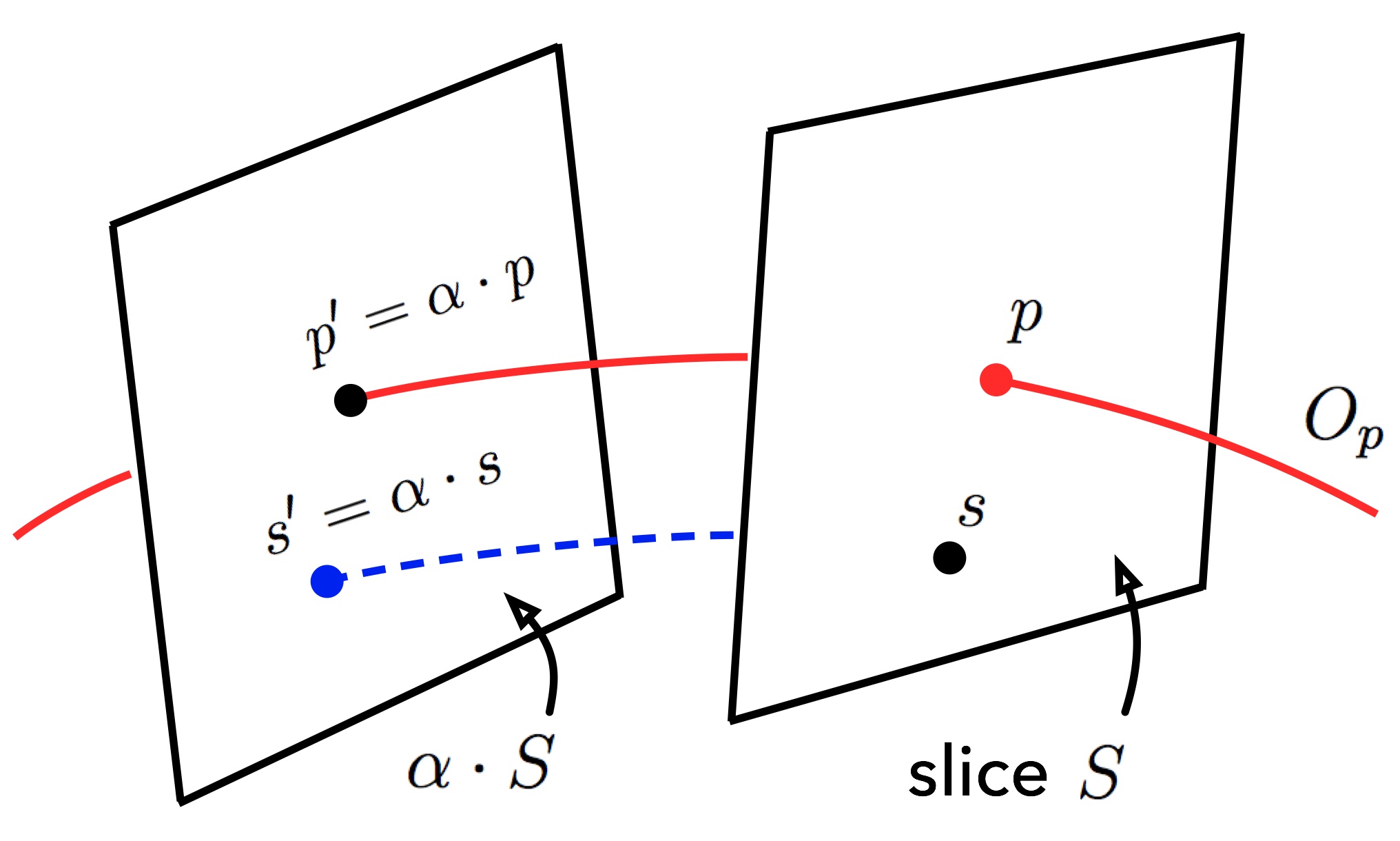}
\caption{ \small {\it Illustration of the differentiable slice theorem. } The submanifold $S$ intersects $O_p$ transversally, with $p$ the unique intersection point. The diffeomorphism~$\iota$ is defined as follows: for a given pair $(s,  p')$, with $s\in S$ and $p' = \alpha\cdot p\in O_p$ (both marked in black), we set $\iota(s, p') := \alpha\cdot s$ (blue), which lies in $\alpha\cdot S$.   }
\label{slice}
\end{center}
\end{figure}

In our case, we have that $\SE(k)$ acts on $P$, and the orbit is $O_q$, with $q$ a target configuration. Since $q$ is of full-rank, from Lemma~\ref{lem:trivialstabilizer}, the stabilizer of $q$ is trivial. Moreover, since $O_q$ is exponentially stable, we have a particular choice of the slice $S$ as stated below:

\begin{lemma}
Let $U$ be an $\SE(k)$-invariant open neighborhood of $O_q$ in $P$. Then, for $U$ sufficiently small, the local stable manifold $W^s_U(q;f)$ is  a slice at~$q$. 
\end{lemma}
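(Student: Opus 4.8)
The plan is to take the slice to be $S := W^s_U(q;f)$ itself and to build the embedding $\iota$ of Lemma~\ref{lem:diffslicethm} by the explicit recipe given just after that lemma, verifying its properties from the three conclusions of Lemma~\ref{lem:fibersmoothsubmfld} together with Remark~\ref{rmk:definitionofNs}.

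First I would record the dimensions. Since $q$ is full rank, Lemma~\ref{lem:trivialstabilizer} gives that $\stab(q)$ is trivial, so $\dim O_q = \dim\SE(k) = k(k+1)/2 =: m$ and $O_q \approx \SE(k)$. By Remark~\ref{rmk:definitionofNs} the tangent space to $S$ at $q$ is $N^s(q)$, and $T_qO_q \oplus N^s(q) = \R^{kn}$; hence $S$ has codimension $m$ and meets $O_q$ transversally at $q$, which is exactly item~1 of Lemma~\ref{lem:fibersmoothsubmfld}.

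Next I would construct $\iota$. Because $\stab(q)$ is trivial, the orbit map $\SE(k) \to O_q,\ \alpha \mapsto \alpha\cdot q$, is a diffeomorphism, so every $p' \in O_q$ is $\alpha\cdot q$ for a unique $\alpha \in \SE(k)$ that depends smoothly on $p'$. Define $\iota: S \times O_q \to P$ by $\iota(s,p') := \alpha\cdot s$. This map is smooth, and taking $p' = \beta\cdot q$ gives $\iota(s,\beta\cdot q) = \beta\cdot s = \beta\cdot\iota(s,q)$, which is precisely the equivariance condition~\eqref{eq:differentialslice}. It then remains to show that $\iota$ is a diffeomorphism onto an $\SE(k)$-invariant open neighborhood of $O_q$. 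For injectivity, suppose $\alpha_1\cdot s_1 = \alpha_2\cdot s_2$ with $p'_i = \alpha_i\cdot q$ and $s_i \in S$; by item~2 of Lemma~\ref{lem:fibersmoothsubmfld} the common value lies in $\alpha_i\cdot W^s_U(q;f) = W^s_U(p'_i;f)$ for $i=1,2$, so it belongs to $W^s_U(p'_1;f) \cap W^s_U(p'_2;f)$. Applying item~3 through the equivariance of the stable manifolds, this intersection is empty unless $p'_1 = p'_2$; thus $p'_1 = p'_2$, whence $\alpha_1 = \alpha_2$ by triviality of $\stab(q)$, and then $s_1 = s_2$. For the local-diffeomorphism property, at $(q,q)$ the two partial derivatives of $\iota$ are the inclusions $N^s(q) \hookrightarrow \R^{kn}$ and $T_qO_q \hookrightarrow \R^{kn}$, which together span $\R^{kn}$, so $d\iota_{(q,q)}$ is an isomorphism; at a general point $(s_0,\alpha_0\cdot q)$ one has $\iota(s,\alpha\cdot q) = \alpha\cdot s$, so the image of $d\iota$ is the sum of the derivative of the isometry $p\mapsto\alpha_0\cdot p$ applied to $T_{s_0}S$ and the orbit tangent space $T_{\alpha_0\cdot s_0}O_{s_0}$, and pulling this back by the inverse isometry $p\mapsto\alpha_0^{-1}\cdot p$ reduces nonsingularity to the transversality $T_{s_0}S \oplus T_{s_0}O_{s_0} = \R^{kn}$. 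Since $q$ is full rank, nearby $s_0$ are full rank as well, so $\dim O_{s_0} = m$, and transversality is an open condition that holds at $q$; I would therefore shrink $U$ so that it holds for every $s_0 \in S$.

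A smooth, injective map whose derivative is everywhere an isomorphism is a diffeomorphism onto its image, and that image is open; it equals $\bigcup_{\alpha\in\SE(k)}\alpha\cdot W^s_U(q;f) = \bigcup_{p'\in O_q}W^s_U(p';f)$, which is manifestly $\SE(k)$-invariant and contains $O_q$, so $S = W^s_U(q;f)$ is a slice at $q$. The main obstacle is the local-diffeomorphism step away from $(q,q)$: one must keep $d\iota$ nonsingular, which amounts to the orbits $O_{s_0}$ through nearby points $s_0 \in S$ staying transversal to $S$. This is where full rank of $q$ (so that $\dim O_{s_0} = m$ for $s_0$ near $q$) and the openness of transversality are essential, and it is precisely why $U$ must be taken sufficiently small.
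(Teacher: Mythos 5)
Your proof is correct and takes essentially the same route as the paper: choose $S := W^s_U(q;f)$ and define $\iota(s,\alpha\cdot q):=\alpha\cdot s$, using the trivial stabilizer of the full-rank configuration $q$. The paper's proof simply asserts that this $\iota$ is a diffeomorphism satisfying~\eqref{eq:differentialslice}, whereas you supply the omitted verification (injectivity via items 2--3 of Lemma~\ref{lem:fibersmoothsubmfld}, and nonsingularity of $d\iota$ from transversality of $S$ with nearby orbits after shrinking $U$); this is an elaboration of the same argument, not a different one.
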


\begin{proof}
Let $U$ be an $\SE(k)$-invariant open neighborhood $U$ of $O_q$ such that Lemma~\ref{lem:fibersmoothsubmfld} is satisfied (with $p$ replaced by $q$). Recall that the local stable manifold~$W^s_U(q; f)$ intersects $O_q$ transversally at $q$, with $q$ the only intersection point.  We then define an embedding $$\iota:W^s_U(q;f)\times O_q\longrightarrow U: (s, \alpha\cdot q)\mapsto\alpha \cdot s,$$ 
which is a diffeomorphism satisfying~\eqref{eq:differentialslice}. \end{proof}

If the open neighborhood $U$ is small, then each orbit in $U$ intersects $W^s_U(q; f)$, and moreover, the intersection is transversal. This, in particular, implies that their intersection  contains {\em only} one point. Thus, each configuration $p$ in $W^s_U(q; f)$ can be used to  represent an orbit $O_{p}$ in $U$.  For ease of notation, we  set  
 $S:= W^s_U(q; f)$.

The second result required  is a variation on the implicit function theorem. Recall that $\Sp({H})$ is a finite dimensional vector space spanned by the vertical shifting basis~${H}= \{[e] \mid e \in E^*\}$ (see Definition~\ref{exmp:verticalshift}). We consider below {\em constant} control laws~$u$ in $\Sp(H)$, i.e., $u = \sum_{e\in E^*} c_e [e]$,  where the $c_e$'s are the real coefficients. We note here that the Banach space $\cal{H}_{G}$ should be considered as the collection of all possible perturbations~$h$ of~$f$, while the finite-dimensional space~$\Sp(H)$ is  the collection of the constant control laws (the offsets) we apply to remedy the robustness issue. To this end,    
we introduce a function $\xi$ as follows:  $$\xi:{\cal H}_G\times \Sp({H}) \times S \longrightarrow \R^{kn}:(h,u,p) \mapsto f(p)+h(p)+u(p).$$ 
The map~$\xi$ takes into account both the perturbation~$h$ and the control~$u$, and evaluates the vector field of the control system~\eqref{eq:ControlModel2} at a given point $p\in S$. 
The map $\xi$ is continuously (Fr\'echet) differentiable since it is linear. We also note that if $\xi(h,u, p) = 0$ for a given triplet $(h, u, p)$, then $\xi(h, u, p') = 0$ for all $p'\in O_p$, and hence $O_p$ is orbit of system~\eqref{eq:Model2} comprised of equilibrium points. Thus, it suffices to establish the  following fact about the existence of a local offset $u$ which stabilizes an orbit $O_p$ of the perturbed (or mismatched) formation control system:

\begin{proposition}\label{pro:implicitfunction}
There exists an open neighborhood $\cal{U}$ of $0$ in ${\cal H}_G$, an open neighborhood $\cal{V}$ of $(0, q)$ in $\Sp({H}) \times S$, and a unique continuously  (Fr\'echet) differentiable map   $\zeta: \cal{U} \longrightarrow \cal{V} $  
such that  
$
\xi(h, \zeta(h)) = 0 
$ for any $h\in \cal{U}$. 
\end{proposition}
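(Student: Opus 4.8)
The plan is to apply the implicit function theorem for maps between Banach spaces, treating the perturbation $h\in {\cal H}_G$ as the parameter and the pair $(u,p)\in \Sp({H})\times S$ as the unknown to be solved for. First I would record that $\xi(0,0,q)=f(q)=0$, since $q$ is a target configuration and hence a critical point of $\Phi$; thus $\big(0,(0,q)\big)$ is a zero of $\xi$ about which to invoke the theorem. The map $\xi$ is continuously Fr\'echet differentiable (as already noted; this is precisely where the $(1,\infty)$-Sobolev norm is used, since it makes the evaluation $h\mapsto h(p)$ jointly $C^1$ in $(h,p)$), so it remains only to prove that the partial derivative of $\xi$ with respect to the pair $(u,p)$ at the base point is a bounded invertible linear map onto $\R^{kn}$.

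The crucial preliminary observation is a dimension count. The slice $S=W^s_U(q;f)$ has codimension $\dim O_q=\dim \SE(k)=k(k+1)/2$ in $P=\R^{kn}$, while $\Sp({H})$ has dimension $|E^*|=k(k+1)/2$. Hence $\Sp({H})\times S$ is finite-dimensional of dimension exactly $kn$, matching the target space $\R^{kn}$. Consequently it suffices to prove that the partial derivative in $(u,p)$ is \emph{surjective}; injectivity then follows automatically, and boundedness of its inverse is automatic in finite dimension. Next I would compute this derivative: since $\xi(h,u,p)=f(p)+h(p)+u(p)$ is linear in $u$ and reduces to $f(p)$ when $h=u=0$, the partial derivative at $\big(0,(0,q)\big)$ is the linear map
$$
(\delta u,\delta p)\;\longmapsto\;\delta u(q)-\Hess(q;f)\,\delta p ,
$$
where I used $\partial f(p)/\partial p|_{p=q}=-\Hess(q;f)$ and, from Remark~\ref{rmk:definitionofNs}, the fact that $T_qS=N^s(q)$, the range space of $H_q:=\Hess(q;f)$. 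The image must therefore be analyzed through the orthogonal splitting $T_qO_q\oplus N^s(q)=\R^{kn}$.

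The main work---and the step I expect to be the real obstacle---is to show that these two families of directions together fill $\R^{kn}$, with the control directions supplying exactly the degenerate tangent directions that $H_q$ annihilates. For the slice directions, $H_q$ restricted to $N^s(q)$ is invertible (it is symmetric with null space $T_qO_q$ and range $N^s(q)$), so $\{-H_q\,\delta p\mid \delta p\in N^s(q)\}=N^s(q)$. For the control directions, I would invoke Proposition~\ref{pro:thequeenking} to decompose $\delta u(q)=d\omega_0(\delta u)\cdot q+\Hess(q;f)\,d\rho_0(\delta u)$ into its $T_qO_q$ and $N^s(q)$ components, and then invoke Corollary~\ref{cor:ontoeta} (equivalently Corollary~\ref{cor:submersion}), which gives that $d\omega_0$ maps $\Sp({H})$ onto $\se(k)$. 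Since $(\Omega,v)\mapsto(\Omega,v)\cdot q$ is an isomorphism $\se(k)\to T_qO_q$ when $q$ is of full rank (see~\eqref{eq:notation1} and the triviality of the stabilizer), the composite $\delta u\mapsto d\omega_0(\delta u)\cdot q$ is onto $T_qO_q$.

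Given an arbitrary target $w=w_T+w_N$ with $w_T\in T_qO_q$ and $w_N\in N^s(q)$, I would then proceed constructively: first pick $\delta u\in\Sp({H})$ with $d\omega_0(\delta u)\cdot q=w_T$, which contributes $\delta u(q)=w_T+H_q\,d\rho_0(\delta u)$; then solve $-H_q\,\delta p=w_N-H_q\,d\rho_0(\delta u)$ uniquely for $\delta p\in N^s(q)$, using invertibility of $H_q$ on $N^s(q)$ together with the fact that the right-hand side lies in $N^s(q)$. This yields $\delta u(q)-H_q\,\delta p=w_T+w_N=w$, proving surjectivity and hence that the partial derivative is an isomorphism. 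The implicit function theorem then produces the open neighborhoods $\cal{U}$ of $0$ and $\cal{V}$ of $(0,q)$ and the unique continuously Fr\'echet differentiable map $\zeta:\cal{U}\to\cal{V}$ with $\xi(h,\zeta(h))=0$, which is the assertion.
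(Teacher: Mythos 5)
Your proposal is correct and follows essentially the same route as the paper: the implicit function theorem, the dimension count $\dim(\Sp(H)\times S)=kn$, and the verification that the partial derivative in $(u,p)$ at $(0,0,q)$ is an isomorphism of $\Sp(H)\oplus T_qS$ onto $\R^{kn}$ via the orthogonal splitting $T_qO_q\oplus N^s(q)$. The only (immaterial) difference is that you prove surjectivity directly using the decomposition of Proposition~\ref{pro:thequeenking} together with Corollary~\ref{cor:ontoeta}, whereas the paper proves that the two range spaces $R_2=\{\delta u(q)\}$ and $R_3=N^s(q)$ have trivial intersection and invokes the same dimension count---dual formulations of the same fact.
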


We refer to the Appendix-C for a proof of Proposition~\ref{pro:implicitfunction}. With Proposition~\ref{pro:implicitfunction}, we are now in a position to prove Theorem~\ref{thm:robustness}

\begin{proof}[Proof of Theorem~\ref{thm:robustness}] 
The proof follows from Proposition~\ref{pro:implicitfunction} and Lemma~\ref{lem:perturbation1}. Indeed, from Proposition~\ref{pro:implicitfunction}, we know that  for $\|h\|$ sufficiently small, there exists a unique constant control law $u$ for which the control system~\eqref{eq:ControlModel2} possesses a unique orbit $O_p$, with $p\in S$, comprised of equilibrium points.  Moreover, the map  
$\zeta: h \mapsto (u, p)$ is continuously (Fr\'echet) differentiable. 
For convenience, we let $g := f + h + u$ be the vector field of the control system~\eqref{eq:ControlModel2}. By the continuity of the map~$\zeta$, 
we have that if $\|h\|$ is small, then so is $\|u\|$, and hence $\|g - f\|$. Thus, from Lemma~\ref{lem:perturbation1}, we conclude that if $\|h\|$ is sufficiently small, then $O_p$ is an exponentially stable $g$-invariant orbit comprised only of equilibrium points. This completes the proof.  
\end{proof}

\section{Conclusion}

This paper has dealt with the problem of controlling a formation of $n$ agents in a Euclidean space. The control objective was to steer a rigid formation to an arbitrary position and orientation in the Euclidean space. The type of formations handled are the ones in which the agents aim to stabilize at given inter-agent distances, called target edge-lengths, and rely on feedback control laws to achieve this goal. The agents are thought of as vertices in a graph that describes both the target edge-lengths and the decentralization structure of the feedback (an agent has access to the relative position of agents related to him by an edge in the graph). The main message of the paper was that if a subset of the agents can freely modify the target edge-lengths that are assigned to them, and if this subset of agents contains a clique in the formation graph, then they can control the rigid motion of the entire formation.   The proof relied on showing that by appropriately choosing the target edge-lengths of said subset of agents, we could induce an arbitrary rigid motion for the formation as a whole. 
Besides its use in controlling formations, we believe that this result may help design control laws that fix the robustness issue described in~\cite{mou2016undirected}.


\bibliographystyle{IEEEtran}
\bibliography{controlformationtriangle-1}

\section*{Appendix}
\setcounter{subsection}{0}

We prove here Propositions~\ref{pro:thequeenking}, ~\ref{pro:linearspan}, and~\ref{pro:implicitfunction}. 

\subsection{Proof of Proposition~\ref{pro:thequeenking}}
The fact that $\rho$ and $\omega$ are continuously differentiable follows from Theorem~4.1 of~\cite{hirsch2006invariant}. We establish here~\eqref{eq:thequeen}. Let $\h$ be an element of ${\cal H}_{G^*}$. We perturb $\f$ and have  $(\f + \epsilon \h)$ for $\epsilon$ small. Then, from~\eqref{eq:relationpfphaha}, we have
\begin{equation}\label{eq:firstorderanalysis}
(f + \epsilon h)(\rho(\epsilon\h)) = \omega(\epsilon \h) \cdot \rho(\epsilon\h).
\end{equation}   
Note that $\rho(0) = q$, $\omega(0) = 0$ and $f(q) = 0$ (since $q$ is a critical point of the potential $\Phi$). Thus, up to the first order of $\epsilon$, we obtain from~\eqref{eq:firstorderanalysis} the following relation:
\begin{equation}\label{eq:assumptionone}
\left. \frac{\partial f(p)}{\partial p} \right |_{p = q} d\rho_0(\h) + h(q) = d\omega_0(\h)\cdot q.
\end{equation}
Since $f$ is a gradient vector field, we have 
\begin{equation}\label{eq:assumptiontwo}
\Hess(q; f) = -\left. \frac{\partial f(p)}{\partial p}\right |_{p = q}.
\end{equation}
Combining~\eqref{eq:assumptionone} and~\eqref{eq:assumptiontwo} leads to~\eqref{eq:thequeen}. \hfill$\blacksquare$

\subsection{Proof of Proposition~\ref{pro:linearspan}}
Let   $\h: = \sum^m_{i=1} c_i \h_i$, with $c_1,\ldots, c_m$ arbitrary real numbers. Because $\se(k)$ and  $\so(k+1)$ have the same dimension.  So, it suffices to show that
\begin{equation}\label{eq:ifandonlyifcondition}
 d\omega_0(\h) = 0\ \Longleftrightarrow\  \eta(\h) = 0.
\end{equation}
From~\eqref{eq:thequeen}, we have that for any $y\in T_qO_q$,  
\begin{equation}\label{eq:innerproductcopypre}
\langle y, h(q) \rangle = \langle y, d\omega_0(\h)\cdot q + \Hess(q; f) d\rho_0(\h) \rangle, 
\end{equation}
where $\langle \cdot, \cdot \rangle$ is the standard inner-product in $\R^{kn}$. 
Note that $\Hess(q; f) d\rho_0(\h)$ lies in $N^s(q)$, i.e., the range space of $\Hess(q; f)$, which is  perpendicular to $T_qO_q$ because $\Hess(q; f)$ is symmetric. Thus, we have 
$$
\langle y, \Hess(q; f) d\rho_0(\h) \rangle = 0, \hspace{10pt} \forall y \in T_qO_q.
$$
Thus,~\eqref{eq:innerproductcopypre} is reduced to
\begin{equation}\label{eq:innerproductcopy}
\langle y, h(q) \rangle = \langle y, d\omega_0(\h)\cdot q \rangle, \hspace{10pt} \forall y \in T_qO_q.
\end{equation}
Since $q$ is of full rank,  
\begin{equation*}
d\omega_0(\h) = 0 \ \Longleftrightarrow\  d\omega_0(\h)\cdot q  = 0. 
\end{equation*}   
On the other hand, $d\omega_0(\h)\cdot q$ lies in $T_qO_q$, 
and hence, from~\eqref{eq:innerproductcopy}, we have
$$
d\omega_0(\h) = 0 \ \Longleftrightarrow\ \langle y, h(q) \rangle = 0, \hspace{10pt} \forall y\in T_qO_q.
$$
From~\eqref{eq:ifandonlyifcondition}, it thus suffices to show that  
\begin{equation}\label{eq:charles}
 \eta(\h) = 0  \ \Longleftrightarrow\ \langle y, h(q) \rangle = 0, \hspace{10pt} \forall y\in T_qO_q.
\end{equation}
To evaluate the inner product $\langle y, h(q) \rangle$, we first write $y = (\Omega, v)\cdot q$ for some $(\Omega, v)\in \se(k)$. Note that such a pair $(\Omega, v)$ is unique since $q$ is of full-rank. Thus, if $\langle y, h(q) \rangle = 0$ for all $y\in T_qO_q$, then $\langle (\Omega, v) \cdot q, h(q) \rangle = 0$ for all $(\Omega, v)\in \se(k)$ and vice versa. 
To proceed, we recall that the $(k+1)\times (k+1)$ matrix $X$ in the proof of Lemma~\ref{lem:trivialstabilizer} is defined as follows: the $i$-th column of $X$ is $(x_i,1)\in \R^{k+1}$. Since the first $(k+1)$ agents of $q$ form a $k$-simplex of full rank,  the matrix $X$ is nonsingular. 
Now, define a linear isomorphism $T: \se(k) \longrightarrow \so(k+1)$ as follows: 
$$
T:  (\Omega, v)\in \se(k) \mapsto 
\begin{bmatrix}
\Omega & v\\
-v^\top & 0
\end{bmatrix}\in\so(k+1). 
$$
Then, by computation, we obtain
$$
\langle (\Omega, v)\cdot q, h(q) \rangle = \tr\left(T(\Omega, v) X \eta(\h) X^\top \right).
$$
Thus, if $\langle (\Omega, v)\cdot q, h(q)\rangle$ vanishes for all $(\Omega, v)\in \se(k)$, then 
\begin{equation}\label{eq:olOmega}
\tr\left(\widetilde\Omega\, X \eta(\h) X^\top \right) = 0, \hspace{10pt} \forall \ \widetilde\Omega\in \so(k+1).
\end{equation}
Since $X$ is nonsingular and $\eta(\h)$ is skew-symmetric, we conclude that \eqref{eq:olOmega} holds if and only if $\eta(\h)  = 0$.
\hfill{\qed}

\subsection{Proof of Proposition~\ref{pro:implicitfunction}}
We first let $\tau := (0,0, q) \in {\cal H}_G\times \Sp({H}) \times S$. We then let $\partial_{1} \xi_\tau$, $\partial_2\xi_\tau$, and $\partial_3\xi_\tau$ be the partial derivatives of $\xi$ at~$\tau$, with respect to the arguments $\h$, $u$ and $p$. Note that the dimension of $\Sp({H})$ is $k(k+1)/2$, and the codimension of~$S$ in~$P$ is also $k(k+1)/2$. It then follows that 
$$\dim\left (\Sp({H})\times S\right ) = kn.$$  
From the implicit function theorem, it suffices for us to prove that the map
$$
\partial_2\xi_\tau \oplus \partial_3\xi_\tau: \Sp({H})\oplus  T_qS \longrightarrow \R^{kn}
$$
defined by 
$$
\partial_2\xi_\tau \oplus \partial_3\xi_\tau: (u, v) \longrightarrow \partial_2\xi_\tau(u) + \partial_3\xi_\tau(v)
$$
is a linear isomorphism. To establish this fact, we first compute the range spaces of $\partial_i\xi_\tau$, denoted by $R_i$, for $i = 2, 3$. In particular, we show that 
$$\dim R_2 = \operatorname{codim} R_3 = \frac{1}{2}k(k+1).$$  We then  show that $R_2\cap R_3 = \{0\}$. It then follows that $R_2 \oplus R_3 =\R^{kn}$, and hence $\partial_2\xi_\tau \oplus \partial_3\xi_\tau$ is a linear isomorphism.  

We first compute the range spaces $R_i$, for $i = 2, 3$. For $R_2$, we note that the map $\xi$ is affine when restricted to $0\times \Sp({H})\times \{q\}$. Specifically, we have 
$
\xi(0, u, q) = u(q) + f(q)
$, 
and hence 
$
\partial_2\xi_\tau(u) = u(q)
$. Thus, $R_2$ is given by 
\begin{equation}\label{eq:rangespace2}
R_2= \Sp\{u(q)\mid u\in H\}.
\end{equation}
Since the sub-configuration $q^*$ formed by the first $(k+1)$ agents is nondegenerate, by our construction of the vertical shifting basis, $u(q) = 0$ if and only if $u = 0$. We thus have $\dim R_2 = k(k+1)/2$. For $R_3$,  
we recall that $S$ is the local stable manifold $W^s_U(q; f)$, and the tangent space of $W^s_U(q; f)$ at~$q$ is the range space of the Hessian matrix $\Hess(q; f)$ (see Remark~\ref{rmk:definitionofNs}). Since $f$ is the gradient vector field of the potential~$\Phi$, one has that 
$$\Hess(q; f) = - \frac{\partial f(q)}{\partial p}  =  \frac{\partial^2\Phi(q)}{\partial p^2},$$
which is symmetric.  
Since $O_q$ is exponentially stable (and hence nondegenerate), the null space of $\Hess(q; f)$ is $T_qO_q$. Thus,  the range space $N^s(q)$ of $\Hess(q; f)$ is the subspace perpendicular to $T_qO_q$. By the arguments above, we obtain that 
$$
R_3 = \partial_3\xi_\tau \(T_qS\) =   \frac{\partial f(q)}{\partial p} \(N^s(q)\) =  N^s(q), 
$$
 and hence $\operatorname{codim}R_3 = \dim T_qO_q = k(k+1)/2$. 
 
We now prove that $R_2 \cap R_3 = \{0\}$. Let $u\in \Sp(H)$; it suffices to show that if $u(q) \in N^s(q)$, then $u = 0$.  Since $N^s(q)$ is perpendicular to $T_qO_q$, we have 
$\langle u(q), y\rangle = 0$ for all $y \in T_qO_q$. 
Then, from~\eqref{eq:charles}, we obtain that $\eta(u) = 0$, where the map $\eta$ is defined in~\eqref{eq:defineeta}. On the other hand, we know from Corollary~\ref{cor:ontoeta} that 
$
\{\eta([e]) \mid e\in E^*\} 
$ is a basis of $\so(k+1)$. Since $u$ is a linear combination of the $[e]$'s and the map $\eta$ is linear, we have that $\eta(u)$ is a linear combination of the $\eta([e])$'s. We thus conclude that $\eta(u) = 0$ if and only if $u = 0$. This completes the proof. 
\hfill{\qed}

\end{document}